\documentclass[11pt,a4paper]{amsart}
\pdfoutput=1
\usepackage{color} 
\usepackage{enumerate}
\usepackage[utf8]{inputenc} 
\usepackage[T1]{fontenc}
\usepackage[english]{babel} 
\usepackage{textcomp} 
\usepackage{gensymb} 
\usepackage[font=small,labelfont=bf]{caption} 
\captionsetup{margin={0cm,0cm}}
\usepackage{mathtools} 
\usepackage{amssymb}
\usepackage{amsthm}
\usepackage{bm} 
\usepackage{lmodern} 
\usepackage{xcolor}
\usepackage{graphicx} 
\graphicspath{{Figures/}}
\usepackage[hidelinks]{hyperref}
\usepackage{verbatim}
\usepackage{mdframed}
\usepackage{tikz}
\setcounter{tocdepth}{1}

\usepackage{algorithm2e}
\RestyleAlgo{ruled}

\usepackage{lineno}

\usepackage{float}
\floatstyle{plaintop}
\restylefloat{table}

\usepackage{booktabs}

\usepackage[tmargin=1in,bmargin=1in,lmargin=1in,
    rmargin=1in]{geometry} 
 
\def\beq{\begin{eqnarray}}
\def\eeq{\end{eqnarray}}
\newcommand{\nn}{\nonumber}

\newtheorem{theorem}{Theorem}[section]
\newtheorem{proposition}[theorem]{Proposition}
\theoremstyle{definition}

\newtheorem{lemma}[theorem]{Lemma}
\newtheorem{corollary}[theorem]{Corollary}
\newtheorem{uptheo}{Theorem}[section]
\newtheorem{remark}{Remark}

\newcommand{\R}{\ensuremath{\mathbb{R}}}
\newcommand{\cL}{\ensuremath{\mathcal{L}}}

\newcommand{\bmy}{\ensuremath{\bm{y}}}
\newcommand{\bv}{\ensuremath{\bm{v}}}
\newcommand{\bw}{\ensuremath{\bm{w}}}
\newcommand{\ba}{\ensuremath{\bm{a}}}
\newcommand{\bb}{\ensuremath{\bm{b}}}
\newcommand{\bc}{\ensuremath{\bm{c}}}

\definecolor{blue}{rgb}{0,0,1}
\definecolor{red}{rgb}{1,0,0}
\definecolor{green}{rgb}{0,.6,.2}
\definecolor{purple}{rgb}{1,0,1}
\definecolor{brown}{rgb}{.59,.29,0}
\definecolor{pink}{rgb}{1,0,1}

\DeclareMathOperator{\mca}{MCA}

\hypersetup{
     colorlinks   = true,
     citecolor    = black,
     linkcolor    = blue!50!black,
     urlcolor     = blue!50!black,
}


\begin{document}
\title{Team game adaptive dynamics}

\author[C.J.\@ Karlsson]{Carl-Joar Karlsson}
\address{Department of Mathematical Sciences, Chalmers University of Technology and the University of Gothenburg, 412 96 Gothenburg, Sweden}
\email{carljoar@chalmers.se}

\author[P.\@ Gerlee]{Philip Gerlee}
\address{Department of Mathematical Sciences, Chalmers University of Technology and the University of Gothenburg, 412 96 Gothenburg, Sweden}
\email{gerlee@chalmers.se}

\author[J.\@ Rowlett]{Julie Rowlett}
\address{Department of Mathematical Sciences, Chalmers University of Technology and the University of Gothenburg, 412 96 Gothenburg, Sweden}
\email{julie.rowlett@chalmers.se}

\maketitle

\begin{abstract}
Adaptive dynamics describes a deterministic approximation of the evolution of scalar- and function-valued traits. Applying it to the team game developed by Menden-Deuer and Rowlett~\cite{menden2019theory}, we constructed an evolutionary process in the game. We also refined the adaptive dynamics framework itself to a new level of mathamatical rigor.  In our analysis, we demonstrated the existence of solutions to the adaptive dynamics for the team game and determined their regularity.  Moreover, we identified all stationary solutions and proved that these are precisely the Nash equilibria of the team game.  Numerical examples are provided to highlight the main characteristics of the dynamics. The linearity of the team game results in unstable dynamics; non-stationary solutions oscillate and perturbations of the stationary solutions do not shrink. Instead, a linear type of branching may occur. We finally discuss how to experimentally validate these results.  Due to the abstract nature of the team game, our results could be applied to derive implications and predictions in several fields including biology, sports, and finance.  
\end{abstract}

\thispagestyle{empty}





\section{Introduction}

Biological diversity is an essential part of nature --- not only by its own value but also because it provides stability to ecosystems and pleasurable environments \cite{abe2012biodiversity}.
It can be enhanced via a number of mechanisms relying on external factors, such as the amount of green area in cities, but it is also a consequence of the evolutionary process itself \cite{abe2012biodiversity}. 
There is however a need to better understand the intrinsic processes that lead to biological diversity in the absence of external influence. 
Mathematical modeling and prediction has often led to unrealistic conclusions, for example that the number of 
species cannot exceed the number of limiting resources \cite{armstrong1980competitive}. 
Similarly, the so called exclusion principle says that two competing species that occupy the same ecological niche cannot co-exist \cite{hardin1960}. 
For marine microbes, these predictions are terribly wrong \cite{menden2014many}. 
In fact, the number of microbial species by far exceeds the predictions from competition theory, and there is tremendous variability between and within species \cite{olofsson2019high,menden2015bloom,menden2014many,hutchins2013taxon,thomas2012global,ward2002many}.
This discrepancy between theory and reality is known as the ``paradox of the plankton'' in marine ecology and it has gained considerable attention \cite{hutchinson1961paradox}. 
Attempts to resolve the paradox abound. 
For instance, Huisman and Weissing~\cite{huisman1999} demonstrated that common competition models can sustain a system with large numbers of species by oscillating, or cycling, the population sizes.
This would partly solve the paradox of the plankton, since the oscillating populations can coexist in much higher numbers than predicted by steady-state analysis.  
A different approach was taken by Menden-Deuer and Rowlett~\cite{menden2019theory} (see also \cite{menden2014many,menden2021biodiversity}) when they modeled the inter-species competitions among cloning (i.e., asexual) microbes using non-cooperative game theory. 
In the game that they developed, an unlimited number of species may coexist. 

These resolutions to the paradox of the plankton offer a phenomenological explanation to why an ecosystem can support a large number of species.
However, even though non-steady state analysis and game theory seem to solve the paradox of the plankton, it does not explain how the species evolved to the current state in the first place or what happens if coexisting species would evolve further.
In other words, the current knowledge can answer the question of why a set of species can co-exist but there is a need to understand how such an ecosystem evolves as the species are evolving.

One theory that has provided insight to the question of how some species evolve into the observable, extant ones is evolutionary game theory (EGT). 
In EGT, an organism's actions and behaviors are represented by a \textit{strategy}.
EGT has explained a wide range of animal behaviors by modeling fitness as a function of strategies that can be observed in populations.
For instance, Maynard Smith and Price~\cite{smith1973logic} explained why some animals do not harm each other in fights against members of their own species by showing that this behavioural strategy corresponds to stable maxima of the fitness. 
At a stable maximum of the fitness, the strategies are known as evolutionarily stable strategies (ESS) since small changes to the strategies are not beneficial. 
However, it remained to be explained how such strategies can appear as a result of evolution.
In the twenty-five years following Maynard Smith's and Price's study of ESS, it was observed that although ESS are long-term stable, they might not evolve spontaneously as a result of small changes to the strategies of the population \cite{eshel1981kin,eshel1983evolutionary,taylor1989evolutionary,christiansen1991conditions,abrams1993}.
This enigma has resulted in the development of a mathematical framework called \emph{adaptive dynamics}, which assumes that small changes to strategies can make permanent change to the population's choice of strategy whenever a mutant carrying the new strategy has a positive invasion fitness. Such permanent change provides a mathematical representation of natural selection. 

Geritz \textit{et al.}~\cite{geritz1996evolutionarily}  classified eight scenarios within adaptive dynamics when the population is close to an ESS. 
For instance, they stated sufficient conditions for the ESS to be an attractor in the sense that strategies that are very similar to the ESS converge towards the ESS. 
On the contrary, if the ESS is not an attractor and if there are multiple successful strategies that are similar to the ESS, then it could happen that strategies close to the ESS are ``branching'' into multiple strategies. 
Branching is an important mechanism for diversification within ecosystems, and a possible route to speciation, and we will address this in Section~\ref{sec:discussion}. 

The classification of Geritz \textit{et al.}~\cite{geritz1996evolutionarily} is limited to a certain type of strategies, namely real, scalar-valued strategies or vector-valued strategies.
These are not the only possibilities, and for our purposes, it is important to investigate strategies beyond scalar-valued or vector-valued ones.
Strategies can be chosen in a variety of ways, and it is a key challenge for the researcher to construct a suitable class of strategies. 
The simplest strategies are percentages, mass, time and other quantities that can be represented by a scalar value.
In ecology, strategies usually represents traits of organisms. 
In cases when adaptive traits are best described by a variation along a continuum, such as the distribution of age or weight within a population, the traits can be described mathematically by \textit{functions}. 
Evolving functions is mathematically challenging, but on the other hand, function-valued traits are applicable to many contexts \cite{dieckmann2006adaptive}. 
To our knowledge, there is no classification similar to the one by Geritz \textit{et al.}~\cite{geritz1996evolutionarily} of function-valued adaptive dynamics.
However, some studies of function-valued traits have characterized ``uninvadable'' species by other approaches, such as optimal control theory \cite{parvinen2013,avila2021} and Lagrangian dynamics \cite{ito2016evolutionary} or more general variational principles \cite{metz2016,kuzenkov2015,morozov2016}.
In some studies, function-valued traits are modelled using a trauncated basis of functions, which are selected for computational reasons \cite{gao2019}. We propose a type of gradient flow method, which can be applied to infinite dimensional vector spaces and subsets of those, if treated carefully. The game dynamics studied here is interesting from a mathematical point of view for several reasons.  First, it is infinite-dimensional and therefore requires sophisticated treatment in order to give reasonable results. Furthermore, it may help to illuminate the aforementioned classification problem.

We therefore propose that adaptive dynamics is a suitable framework for studying the evolution of the team game introduced in  \cite{menden2014many,menden2019theory}.
This game was initially constructed for vector-valued strategies, which we may refer to as the discrete team game.  It was generalized to function-valued strategies in subsequent publications \cite{menden2021biodiversity, rowlett2022diversity}, hence the need to utilize the theory of adaptive dynamics of functions. 
In both cases---vector-valued as well as function-valued---the strategy represents a \textit{composition} of the species.
Due to its applicability in a broad range of context where the composition of a unit of members is studied, the game is called ``the Game of Teams,'' or the team game, in this article as well as in \cite{rowlett2022diversity}.
It will be introduced in detail in Section~\ref{sec:background} along with a useful collection of notions and ideas from adaptive dynamics. 
In Section~\ref{sec:analysis}, we analyze the behavior of the dynamics theoretically and purely mathematically for the function-valued team game.
Then, once the mathematical results are in place, we focus on applying adaptive dynamics to the function-valued game in Section~\ref{sec:game-dynamics}.
Section~\ref{sec:computations} 
analyzes the adaptive dynamics for the discrete, vector-valued team game.  We obtain the explicit form of the solutions to the dynamical system and also give examples and compare the results to the function-valued game in Section~\ref{sec:game-dynamics}.
Finally, we present experimental conditions that could test the current results.

\section{Background \label{sec:background} }
Game theory has advanced our understanding of decision making, animal behaviour, population dynamics and other phenomena involving actions performed by humans or animals \cite{mazalov2014}. 
A central notion in non-cooperative game theory is the \textit{payoff} to a player as a function of the actions it takes together with the actions taken by other players.
For example, in the two-player rock-paper-scissors, the pair (rock, scissors) would give a win to the first player and a loss to the second player.  To express this mathematically, the payoff function could give $+1$ to the first player and $-1$ to the second player.

A \textit{mixed strategy} is a probability distribution over the set of actions.  In our example of rock-paper-scissors, each player could instead choose the probability of drawing either rock, paper, or scissors.  If they draw these at random, this would be represented by the mixed strategy $(1/3, 1/3, 1/3)$.
There are three \em pure strategies \em in this example:  $(1, 0, 0)$, $(0,1,0)$, and $(0,0,1)$ corresponding to always drawing rock, paper, or scissors, respectively.  The payoffs to each player are in this cased calculated using the payoffs for all combinations of pure strategies together with the definition of expected value according to the probabilities with which the players choose to execute the pure strategies.
In this article, the strategies can always be seen as mixed strategies because there is randomness in the game. In order to clarify this, we need to consider the game in more detail.

\subsection{The discrete game of teams}  \label{sec:got}
The team game of Menden-Deuer and Rowlett~\cite{menden2019theory} was initially developed to investigate species of asexually reproducing microbes competing for survival.  It was later generalized and interpreted in other contexts  \cite{menden2021biodiversity,rowlett2022diversity}. 
We therefore may use the terms species and teams interchangeably. To describe this game, we consider a collection of species, each consisting of several individuals. 
Each individual has a ``strength'' which can be measured and compared with another individual's strength.  This strength is known as a \emph{competitive ability} \cite{menden2019theory}, abbreviated CA, and is selected from the values $\{k/M\}$ for the integers $k=0, \ldots, M$.  For simplicity, assume that there are just two species. The species compete in such a way that one randomly chosen individual from one species competes against an individual from the other team, which is also chosen at random. The stronger individual defeats the weaker so that it can replicate while the losing individual dies.  Nothing happens if the competitors are equally strong.  
Thus, individual success implies population growth of the species to which the winning individual belongs while the losing species experiences a population decrease. 
This individual competition repeats, and the cumulative losses and gains can result in either one's extinction and the other's dominance or co-existence.  

Let $y_k$ be the number of individuals in species $\bmy$ with competitive ability equal to $\frac k M$, and $z_k$ be the number of individuals in species $\bm{z}$ with competitive ability equal to $\frac k M$.  For this to be meaningful we assume 
\beq 
y_k, z_k \in [0, \infty), \quad \forall 0 \leq k \leq M, \quad \sum_{k=0} ^M y_k > 0, \quad \sum_{k=0} ^M z_k > 0. \label{eq:discrete_setup} 
\eeq 
Note that $y_k$ and $z_k$ need not be integer-valued.  Then, the payoff in the team game as described above to $\bmy$ in competition with $\bm{z}$ is 
\begin{equation}
E[\bmy, \bm{z}] = \sum_{k=0}^M y_k \left(\sum_{j=0}^{k-1} z_j - \sum_{\ell=k+1}^M z_\ell\right).
	\label{eq:expect-discrete}
\end{equation}
The payoff to $\bm{z}$ in competition with $\bmy$ is computed analogously, by summing over the cumulative wins and losses, so that 
\[ E[\bm{z},\bmy] = \sum_{k=0}^M z_k \left(\sum_{j=0}^{k-1} y_j - \sum_{\ell=k+1}^M y_\ell\right). \] 
It is straightforward to compute that this game is zero-sum and symmetric.  
Each competition between two teams has randomly selected individuals competing, but $E[\bmy,\bm{z}]$ captures the statistical success of the competing teams and can be analyzed without addressing the randomness of the game.  Identifying each CA value as a pure strategy, the vector $\bmy=(y_0, \ldots, y_M)$, suitably normalized, can be identified with a mixed strategy, and the game can be expressed in normal form such that $E[\bmy,\bm{z}]$ is the payoff to $\bmy$ in competition with $\bm{z}$ computed according to the definition of expected value. 

For the game to be fair and interesting, we impose that every team needs to respect a bound on its mean strength, or \em mean competitive ability, \em abbreviated MCA.
This MCA for species $\bmy$, as well as its constraint are respectively 
\begin{equation}
\mca(\bmy) := \frac{\sum_{k=0}^M \frac k M y_k}{\sum_{k=0}^M y_k} \leq \frac 1 2.
\label{eq:discrete-mca}
\end{equation}
The same constraint is imposed on any other species that competes.  We will identify a team with its strategy, since the strategy of the team fully characterizes and distinguishes the team.  The strategy can in term be uniquely identified with a vector in $\R^{M+1}$ whose components satisfy \eqref{eq:discrete_setup} and \eqref{eq:discrete-mca}.  We refer to these as the \textit{discrete} strategies and the corresponding game as the \em discrete team game.  \em  

One way to create a team is to compute the sum of two teams, meaning that we compute the sum of their strategies, because the resulting strategy will satisfy both \eqref{eq:discrete_setup} and \eqref{eq:discrete-mca}.  In this way, one may also consider any number of competing species by letting each species compete against the sum of all the others.  The only constraints on the composition of the teams are \eqref{eq:discrete_setup} and \eqref{eq:discrete-mca}; they are otherwise allowed to be chosen freely.

To identify those strategies that may be more likely to win in competition with others (or less likely to lose), we recall an important notion in game theory, an \em equilibrium point, \em also known as a \em Nash equilibrium point \em due to Nash's proof of their existence \cite{nash1950}. An equilibrium point is a collection of strategies for all competing teams so that if any one team alone changes their strategy, their payoff does not increase. Menden-Deuer \textit{et al.} \cite{menden2021biodiversity} identified all equilibrium points for the discrete game of teams.  We summarize the result here.  

\begin{theorem}[See Theorem 1 in \cite{menden2021biodiversity}] \label{th:discrete_eqs} In the discrete game of team as defined here, assume first that $M$ is odd.  Then an equilibrium point consists of strategies that are a positive scalar multiple of the vector $(1, 1, \ldots, 1)$.  If we instead assume that $M$ is even, then an equilibrium point consists of strategies that are of the form $(a,b,a,b, \ldots, a)$ for two non-negative constants $a$ and $b$ that are not both zero.  
\end{theorem}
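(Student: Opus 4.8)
The plan is to characterize the set of optimal (maximin) strategies and then invoke the fact that, for a symmetric zero-sum game such as the team game, a collection of strategies is a Nash equilibrium precisely when each competing team plays an optimal strategy. Since the game is zero-sum and symmetric, its value is $0$, so $\bmy$ (normalized so that $\sum_k y_k = 1$) is optimal if and only if $E[\bv, \bmy] \le 0$ for every admissible strategy $\bv$, i.e.\ every $\bv$ satisfying \eqref{eq:discrete_setup} and \eqref{eq:discrete-mca}. Writing $w_k := \sum_{j=0}^{k-1} y_j - \sum_{\ell=k+1}^M y_\ell$ for the payoff of the pure strategy concentrated at $k$ against $\bmy$, the deviation payoff is the linear functional $E[\bv,\bmy] = \sum_{k=0}^M v_k w_k$, so optimality asserts that this functional is maximized, with value $0$, over the polytope of admissible strategies. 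The first computation I would record is the telescoping identity $w_{k+1}-w_k = y_k + y_{k+1} \ge 0$, which shows that $w$ is non-decreasing; this monotonicity is the structural engine of the whole argument.

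Next I would apply linear-programming duality (equivalently, the Karush--Kuhn--Tucker conditions) to the problem of maximizing $\sum_k v_k w_k$ subject to $v_k \ge 0$, $\sum_k v_k = 1$ and the constraint $\sum_k \tfrac kM v_k \le \tfrac12$. This produces multipliers $\lambda \in \R$ and $\mu \ge 0$ such that $w_k \le \lambda + \mu \tfrac kM$ for all $k$, with equality whenever $y_k > 0$, and with $\mu>0$ forcing the MCA constraint to be active. I would first rule out $\mu = 0$: if the supporting line were horizontal then $w$ would be constant on the support, contradicting $w_{k+1}-w_k = y_k+y_{k+1}>0$ across any two consecutive support points (and a one-point support, a single pure strategy, is beaten by any stronger CA). Hence $\mu>0$ at every equilibrium, so every equilibrium strategy has $\mca(\bmy) = \tfrac12$ exactly. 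Geometrically, the admissible deviation payoff is the upper concave envelope of the points $(k/M, w_k)$ evaluated at mean $\tfrac12$, and optimality says this envelope meets the axis there.

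The structural heart is then to read off the shape of $\bmy$ from the supporting line. On the support, where $w_k = \lambda + \mu\tfrac kM$ is affine in $k$, the second difference vanishes, and since $w_{k+1}-2w_k+w_{k-1} = y_{k+1}-y_{k-1}$ this yields $y_{k+1}=y_{k-1}$: the mass alternates between two values $a$ and $b$ along runs of consecutive support indices. I would then analyze the gaps, using the inequalities $w_k \le \lambda+\mu\tfrac kM$ at the zero-mass indices together with monotonicity of $w$: a plateau of length $\ge 3$ or an overshoot beyond the extreme support points violates the supporting-line inequality, while gaps of length exactly two are forced to be ``transparent'' (the interior constraint holds with equality and the flanking masses coincide). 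The upshot is that $w$ is in fact globally affine and $\bmy$ has the global alternating form $(a,b,a,b,\dots)$. For sufficiency I would simply exhibit the line: against the uniform strategy one computes $w_k = (2k-M)/(M+1)$, so that $E[\bv, \text{uniform}] = \tfrac{2}{M+1}\big(\textstyle\sum_k k\, v_k - \tfrac M2\big) \le 0$ for every admissible $\bv$, and the same affine-$w$ verification covers the alternating family.

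Finally I would split on the parity of $M$. For $M$ even the alternating vector $(a,b,a,\dots,a)$ has $\mca \equiv \tfrac12$ identically in $a,b$, so the whole non-negative family (not both zero) satisfies the active-constraint condition and is optimal. For $M$ odd the alternating vector is $(a,b,\dots,a,b)$ with equally many $a$'s and $b$'s, and a direct computation gives $\mca(\bmy)=\tfrac1M\big(\tfrac{M-1}{2}+\tfrac{b}{a+b}\big)$; the requirement $\mca = \tfrac12$ then forces $b=a$, collapsing the family to a positive scalar multiple of $(1,1,\dots,1)$. I expect the main obstacle to be the combinatorial gap analysis in the necessity direction---showing rigorously that the supporting-line inequalities, and not merely the equalities on the support, force the global alternating pattern and exclude irregular or widely-gapped supports; the monotonicity of $w$ and the activeness of the MCA constraint are the two levers I would use to close that gap.
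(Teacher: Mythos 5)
You should first know that this paper does not prove Theorem~\ref{th:discrete_eqs} at all: it is imported verbatim as Theorem 1 of \cite{menden2021biodiversity} (``We summarize the result here''), so there is no internal proof to compare against, and your proposal has to be judged on its own merits. On those merits, your LP/KKT skeleton is sound and, in outline, correct: the reduction of equilibrium to optimality in a symmetric zero-sum game, the monotonicity identity $w_{k+1}-w_k=y_k+y_{k+1}$, the supporting line $w_k\le \lambda+\mu k/M$ with equality on the support and $\lambda+\mu/2=0$, complementary slackness giving $\mca(\bmy)=\tfrac12$ once $\mu>0$, the second-difference identity $w_{k+1}-2w_k+w_{k-1}=y_{k+1}-y_{k-1}$ forcing alternation where $w$ is affine, the sufficiency check via affine $w$ (your $w_k=(2k-M)/(M+1)$ against the uniform is right, and for the alternating family $w_k=(a+b)(k-M/2)$), and both parity computations at the end (I checked your formula $\mca(\bmy)=\tfrac1M\big(\tfrac{M-1}{2}+\tfrac{b}{a+b}\big)$ for $M$ odd; it is correct and does collapse the family to constants).

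Three points need repair, the first two being genuine local errors and the third the gap you yourself flagged. (i) The parenthetical that a one-point support ``is beaten by any stronger CA'' is false as stated: a stronger pure CA need not satisfy the MCA constraint, and for $M=2$ the pure strategy $(0,b,0)$ at $k=M/2$ \emph{is} an equilibrium (the $a=0$ member of the family). The correct exclusion of $\mu=0$ for singleton support $\{k_0\}$ is that dual feasibility itself fails: $w_{k_0+1}=y_{k_0}>0=\lambda$ (with $k_0=M$ excluded by the MCA constraint) --- and dual feasibility is required at \emph{all} indices, admissible or not. (ii) Your claim that a gap of length exactly two is per-gap ``transparent'' with coinciding flanking masses does not follow from that gap alone: the interior constraints in a gap $(p,q)$ give only $y_p\le \mu/M$ (at $p+1$) and $y_q\ge \mu/M$ (at $q-1$), which is consistent with $y_p<y_q$. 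Equality, and the exclusion of gaps of length $\ge 3$, come from \emph{coupling across} gaps and runs at shared support points: a support point flanked by nontrivial gaps on both sides gets $\mu/M\le y_q\le\mu/M$; a support point adjacent to another satisfies $y_q+y_{q+1}=\mu/M$, hence $y_q\le\mu/M$; and at the extremes the constraints at $k=0$ and $k=M$ give $y_{\min S}\ge(\min S)\,\mu/M$ and $y_{\max S}\le(M-\max S)\,\mu/M$. Combined with the total-rise identity $y_p+y_q=(q-p)\mu/M$ over each gap, this bookkeeping does close (e.g.\ support $\{0,3,6\}$, $M=6$: the two gaps force $y_3=\mu/M$, hence $y_0=2\mu/M$, contradicting $y_0\le\mu/M$ from the constraint at $k=1$), so your flagged obstacle is real but surmountable --- just not by the per-gap statements as written. (iii) Your opening ``fact'' that an equilibrium is a collection of optimal strategies is textbook only for two teams; for $n$ teams, where team $i$ faces the aggregate $g_i=\sum_{j\ne i}f_j$, you need the short extra argument that $\max_{\bv}E[\bv,g_i]\ge E[u^*,g_i]\ge 0$ (testing with an optimal $u^*$), that the equilibrium payoffs $E[f_i,g_i]$ attain these maxima and sum to zero by skew-symmetry, hence all vanish, so each aggregate $g_i$ is itself non-exploitable and the family structure of the $f_j$ follows by solving the linear system $f_j=\tfrac{1}{n-1}\sum_i g_i-g_j$ within the (linearly closed) family.
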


The phenomenon that the shape of equilibrium strategies depends on the discretization of the game motivates one to consider a game in which the competitive ability values can be selected from the entire (continuous) range of values $[0,1]$. 

\subsection{The function-valued game of teams}  \label{sec:got_bm_c}
Completely analogous to the discrete strategies are the \textit{continuous} and \textit{bounded measurable} strategies, introduced by Menden-Deuer \textit{et al.}~\cite{menden2021biodiversity}. For a continuous (respectively bounded measurable) non-negative function defined on $[0,1]$, we use the measure $f(x)dx$ with $dx$ the one-dimensional Lebesgue measure to define the amount of individuals of the associated team having competitive ability within any given subinterval of $[0,1]$.  Analogous to the discrete game, we identify a team with its strategy, that is a function satisfying 
\beq f:[0,1] \to [0, \infty), \quad \int_0 ^1 f(x) dx > 0, \quad \mca(f) = \frac{\int_0 ^1 x f(x) dx}{\int_0 ^1 f(x) dx} \leq \frac 1 2. \label{eq:bm_cont_setup} \eeq 
In the continuous game, we assume further that the function is continuous, whereas in the bounded measurable game, we only assume further that the function is in $L^\infty[0,1]$.  We refer to both of these games as \em function-valued games of teams. \em

Figure~\ref{fig:concept} visualizes the amount of individuals with $a<\mathrm{CA}<b$ in the gray area.
The payoff to a strategy $f$ in competition with a strategy $g$ is in this case
\begin{equation*}
 E[f,g] = \int_0^1 f(x) \left( \int_0^x g(y)\,dy-\int_x^1 g(y)\,dy \right) dx
\end{equation*}

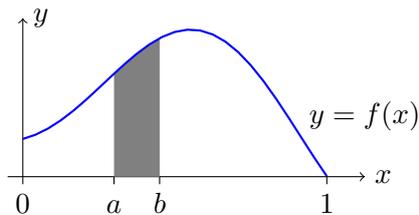
\begin{figure}
\centering
\begin{tikzpicture}[domain=0:4]
  \draw[->] (-0.2,0) -- (4.5,0) node[right] {$x$};
  \draw[->] (0,-0.1) node[below] {$0$} -- (0,2.1) node[right] {$y$};
  \draw     (1.2,0)  -- (1.2,-0.1);
  \draw     (1.8,0)  -- (1.8,-0.1);
  \node at (1.2,-0.615) [above] {$a$};
  \node at (1.8,-0.615) [above] {$b$};
  \draw     (4.0,0)  -- (4,-0.1) node[below] {$1$};
  \fill [gray, domain=1.2:1.8, variable=\x]
      (1.2, 0)
      -- plot ({\x}, {0.5*(1 +0.51*\x + \x *sin(\x r))})
      -- (1.8, 0)
      -- cycle;
  \draw[color=blue, thick] plot (\x, {0.5*(1 +0.51*\x + \x *sin(\x r))});
  \node at (4.5,0.8) {$y=f(x)$};
\end{tikzpicture}
\caption{The strategies of the team game are distributions of competitive ability. For the function-valued game, in the interval $(a,b)$ is the number (or percentage) of individuals with competitive ability between $a$ and $b$.}
\label{fig:concept}
\end{figure}

The team game in this case also generalizes to multiple teams analogously to the discrete game of teams.  Specifically, $f$ competes against $n$ other teams $g_1,...,g_n$ by simply competing with the strategy defined by the sum of the other teams, noting that such a strategy satisfies \eqref{eq:bm_cont_setup}.  Menden-Deuer \textit{et al.}~\cite{menden2021biodiversity} identified the equilibrium points for these games of teams as well.  

\begin{theorem}[See Theorem 1 in \cite{menden2021biodiversity}] \label{th:eqstr_function}
In both the continuous and bounded measurable games of teams defined here, all equilibrium points are collections of strategies for all teams that are positive constant functions, which are not necessarily identical.  
\end{theorem}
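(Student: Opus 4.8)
The plan is to prove the two inclusions separately: that positive constants form equilibria, and conversely that every equilibrium consists of positive constants. For a fixed strategy $h$ it is convenient to introduce the \emph{advantage function} $\Phi_h(x) := \int_0^x h(y)\,dy - \int_x^1 h(y)\,dy = 2\int_0^x h - \int_0^1 h$, so that $E[g,h] = \int_0^1 g(x)\,\Phi_h(x)\,dx$ and the constraint \eqref{eq:bm_cont_setup} on an admissible competitor $g$ reads $\int_0^1 (2x-1)g(x)\,dx \le 0$. I will also use repeatedly that the kernel of $E$ is antisymmetric (it is $+1$ for $y<x$ and $-1$ for $y>x$), whence $E[f,f]=0$ for every strategy $f$.

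Sufficiency is immediate. If $h \equiv c > 0$ then $\Phi_h(x) = c(2x-1)$, so $E[g,h] = c\int_0^1(2x-1)g \le 0 = E[h,h]$ for every admissible $g$, and no deviation is profitable. Since the $i$-th team plays against the aggregate opponent $h_i := \sum_{j\ne i} f_j$, which is again admissible, the same computation shows that any collection of positive constants (identical or not) is an equilibrium.

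For necessity, fix an equilibrium $(f_1,\dots,f_n)$ and consider one team's aggregate opponent $h := h_i$, with total mass $A := \int_0^1 h > 0$. The best-response property gives $E[g,h] \le E[f_i,h]$ for every admissible $g$; were some admissible $g$ to yield $E[g,h]>0$, rescaling $g \mapsto \lambda g$ would drive the payoff to $+\infty$ and preclude a best response, so in fact $E[g,h] \le 0$ for every admissible $g$. The heart of the proof is to upgrade this to the pointwise bound $\Phi_h(x) \le A(2x-1)$ on $[0,1]$. I would obtain it by testing against two-point competitors $a\delta_{x_1} + b\delta_{x_2}$ with $x_1 \le \tfrac12 \le x_2$ and weights tuned so that $\mca = \tfrac12$; letting one atom tend to an endpoint, where $\Phi_h(0)=-A$ and $\Phi_h(1)=A$ are known, and using continuity of $\Phi_h$ yields exactly $\Phi_h(x)\le A(2x-1)$ for all $x$. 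These atoms are not admissible functions, so each test must be realised by narrow nonnegative bumps and justified by a limiting argument that keeps the $\mca$ constraint satisfied; making this approximation rigorous, uniformly for both the continuous and the bounded-measurable classes, is the main obstacle.

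Once the bound is in place, complementary slackness finishes the argument. From $0 = E[h,h] = \int_0^1 h\,\Phi_h \le A\int_0^1(2x-1)h \le 0$, both inequalities must be equalities: the first forces $\mca(h)=\tfrac12$, and the second, together with $h \ge 0$ and $A(2x-1)-\Phi_h \ge 0$, forces $\Phi_h(x) = A(2x-1)$ for almost every $x$ in the support of $h$. Writing $F(x):=\int_0^x h$, this says $F(x)=Ax$ on $\{h>0\}$ while $F(x)\le Ax$ everywhere; differentiating gives $h \in \{0,A\}$ almost everywhere, so $h = A\,\mathbf{1}_S$ for some set $S$. The resulting identity $|S\cap[0,x]| = x$ for $x \in S$ forces $S$ to be an interval $[0,s]$ up to a null set, and $\int_0^1 h = A$ gives $s=1$; hence $h$ is the positive constant $A$. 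Applying this to each $i$ shows every aggregate $h_i = \sum_{j\ne i} f_j$ is constant, and a short linear-algebra step, comparing each $h_i$ with the total $\sum_j f_j$, then forces every individual $f_j$ to be a positive constant, completing the proof.
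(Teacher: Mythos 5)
Your proof is correct, but there is nothing in the paper to compare it against: Theorem~\ref{th:eqstr_function} is imported verbatim from Theorem~1 of \cite{menden2021biodiversity} and is stated here without proof. The closest internal analogue is the dynamical one, proved by a completely different route: Theorem~\ref{teo:eigen-f} and Proposition~\ref{prop:stationary} identify the constants as the only stationary points by solving the eigenvalue problem $Af=\lambda f$ for the constrained selection gradient (a spectral argument via a Fredholm integro-differential equation), and Lemma~\ref{lem:nonconstant} shows non-constant strategies are beaten via Cauchy--Schwarz applied to $\nabla E(f_0)$. Your argument is instead a direct, self-contained game-theoretic proof of the equilibrium characterization: positive homogeneity of $E$ in the first slot yields $E[g,h]\le 0$ for every admissible $g$; two-atom test strategies with one atom pushed to an endpoint (where $\Phi_h(0)=-A$, $\Phi_h(1)=A$) upgrade this to the pointwise bound $\Phi_h(x)\le A(2x-1)$; and complementary slackness in the chain $0=E[h,h]=\int_0^1 h\,\Phi_h\le A\int_0^1(2x-1)h\le 0$ forces both $\mca(h)=\tfrac12$ and $F(x)=Ax$ a.e.\ on $\{h>0\}$, where $F(x)=\int_0^x h$. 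What your approach buys is transparency about \emph{why} only constants survive (the constraint hyperplane $\int(2x-1)g\le 0$ exactly supports the linear payoff functional $\Phi_h$ when $h$ is constant), at the cost of the mollification step you flag; the spectral route in the paper buys uniform treatment of all $L^p$ classes but only addresses stationarity of the dynamics, not Nash optimality directly.

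Two checks and one small slip. The flagged approximation is indeed routine: $\Phi_h$ is absolutely continuous, so bumps of width $\varepsilon$ (continuous hats or indicators, matching the two strategy classes) with weights retuned at each $\varepsilon$ so that $\int_0^1(2x-1)g\,dx\le 0$ converge to the atomic tests, and your tuning $a(x_1-\tfrac12)+b(x_2-\tfrac12)=0$ with $x_1\le\tfrac12\le x_2$ gives exactly $\Phi_h(x)\le A(2x-1)$ in the limit. Your set-theoretic cleanup ($h=A\,\mathbf{1}_S$, then $S=[0,1]$) is sound but can be shortened: $G(x):=Ax-F(x)$ is absolutely continuous, nonnegative by your pointwise bound, and vanishes at both endpoints; on any maximal open interval where $G>0$ one has $h=0$ a.e.\ (since $G=0$ a.e.\ on $\{h>0\}$), hence $G'=A>0$ there, contradicting $G=0$ at the right endpoint, so $G\equiv 0$ and $h\equiv A$ directly. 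The slip: in the sufficiency step the comparator should be $E[f_i,h_i]$, not $E[h,h]$; this is harmless since $E[f_i,h_i]=c\int_0^1(2x-1)f_i\,dx=0$ when $f_i$ is constant, but it should be stated. Your closing linear algebra is fine: $(n-1)\sum_j f_j=\sum_i h_i$ shows the total is constant, hence each $f_i=\sum_j f_j-h_i$ is a constant, positive because $\int_0^1 f_i>0$ by \eqref{eq:bm_cont_setup}.
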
 

The restriction to $[0,1]$ in all games of teams defined here can be relaxed; the functions can be supported on any compact subset of the real line.  However, the unit interval is convenient, and if a function is defined on any other compact interval on the real line, then it can be transformed via a change of variables to a function on $[0,1]$.  So, no generality is lost by making this assumption.   For details, see \cite{rowlett2022diversity}.

\subsection{Adaptive dynamics} \label{sec:ad-intro}
At the heart of adaptive dynamics lies the assumption that there exists a ``resident'' population in which there can appear mutations and that the success of the mutants can be inferred from the initial growth rate of the mutated individuals. 
The initial growth rate of mutants is called \textit{invasion fitness}. 
It is further assumed that mutations are rare, such that each mutant either takes over the entire population or goes extinct before the next mutant arrives \cite{metz1996adaptive,geritz1996evolutionarily,brannstrom2013hitchhiker}.
In other words, if a mutant has lower fitness than the resident population then it disappears, but if the mutant's fitness is higher than the resident population's then it is assumed that the mutation spreads into the entire resident population.
By assuming this, adaptive dynamics offers a deterministic description of biological evolution. 

Dieckmann \textit{et al.}~\cite{dieckmann2006adaptive} proposed a framework for adaptive dynamics on function-valued traits. 
They used approximations to stochastic models, assuming that (a) mutations make small changes to the traits, and that (b) the natural selection occurs much faster than the typical time between the appearance of novel mutations, so that each population is monomorphic.
The result of these considerations is that a trait function $f$ develops according to 
\begin{equation}
	\frac{d}{dt} f(x) = \frac{1}{2}\mu_f \bar{n}_f \int_\Omega \sigma^2_f(x,y) g_f(y)\,dy
\label{eq:master}
\end{equation}
where $f$ is the trait/strategy.
The integration domain $\Omega$ needs to be selected to suit the model. The quantity $\mu_f$ is the probability that $f$ can be reached via mutations of nearby strategies, and $\bar{n}_f$ is the equilibrium population size, which is assumed to be constant and independent of the strategy $f$.
Here, $\sigma^2_f$ is the variance-covariance function of the mutation distribution. The role of the variance-covariance function $\sigma^2_f$ is to account for cross-dependence; if the dynamics at $x$ changes the strategy $f$ in such a way that it affects $f$ at another point $y$, this is encoded in the variance-covariance function.
Typically this is formulated as a constraint on all traits.
The function $g_f$ is the functional gradient of the invasion fitness function.
Let $S$ be the set of strategies and let $E(f,g)$ be the invasion fitness of $f\in S$ in the resident population with trait $g\in S$. 
Then
\begin{equation*}
g_f(x) = \left.\frac{d}{dt}\right|_{t=0} \, E(f+t\delta_x,f).
\end{equation*}
Here, $\delta_x$ is the Dirac delta distribution \cite{friedlander1998introduction} centered at $x$.
We will call the dynamics equation~(\ref{eq:master}) the \emph{canonical equation} of adaptive dynamics of function-valued traits.

\subsection{Introducing the function-valued team game in the context of adaptive dynamics} \label{sec:combine}

Consider a collection of several species: $f_1, \ldots, f_n$, each of which is characterized by its strategy $f_i$.  A situation when they all compete in the same game can be interpreted as a competition between species (or strains of a species) for resources or in combat or similar situations. 

Thus, it is in complete analogy with Geritz \textit{et al.}~\cite{geritz1996evolutionarily} and Dieckmann \textit{et al.}~\cite{dieckmann2006adaptive} that we let $E[f_i,f_1+f_2+...+f_n]$ be the growth rate of a species (or strain) $i$ in competition with all the other species.   
Notice that 
\begin{multline}
E[f_i,f_1+f_2+...+f_n] = E[f_i,f_1]+E[f_i,f_2]+...+E[f_i,f_n]\\
= E[f_i,f_1]+E[f_i,f_2]+...+E[f_i,f_{i-1}]+E[f_i,f_{i+1}]...+E[f_i,f_n]
\end{multline}
since $E[f_i,f_i]=0$. 
Moreover, if $a$ is a constant, and $f,g$ are two integrable functions then $E[a f,g]=a E[f,g]$ and $E[f,a g]= a E[f,g]$. 
The selection gradient is therefore given by 
\begin{equation}
\label{eq:gf}
g_f(x)=\left.\frac{d}{dt}\right|_{t=0} E[f+t\delta_x,f] = \int_0^x f(y)\,dy - \int_x^1 f(y)\,dy.
\end{equation}
The canonical equation of adaptive dynamics, equation~(\ref{eq:master}), now reads
\beq
\frac{d}{dt} f(x) = \frac{1}{2}\bar{n}_f\mu_f\int_0^1 \sigma^2_f(x,y)\left(\int_0^y f(z)\,dz - \int_y^1 f(z)\,dz\right)dy. \label{eq:barn}
\eeq
We will assume that $\bar{n}_f$ is a constant, since it appears as a prefactor in equation~(\ref{eq:master}) and therefore only impacts the \emph{rate} of change and not the \emph{direction}.
We assume that the competitive ability does not affect the mutation rate, and hence the mutation probability $\mu_f$ in the canonical equation~(\ref{eq:master}) will also be assumed to be constant. As there are no physical time units in the canonical equation, we may set $\frac{1}{2}\bar{n}_f\mu_f=1$ without losing any information.  Notice that in the canonical equation \eqref{eq:master}, we have $\Omega=[0,1]$, which is the logical choice for the team game as explained in Section~\ref{sec:got}.

In this case, and if it is further assumed that there is no variance or covariance, the function-valued strategies would develop in time according to
\[
\frac{d}{dt}f = g_f(x).
\]
This clearly does not take into account that $f$ may be subjected to model-specific constraints. In \S \ref{sec:constraints}, we resolve that problem. In computing the selection gradient, see equation~(\ref{eq:gf}), we use the Dirac delta, which is a distribution.
However, it is also possible to derive the expression for the selection gradient working with the function spaces $L^p[0,1]$ for $1 \leq p \leq \infty$ since these spaces contain the strategies in the function-valued team game.  Since continuous and bounded measurable functions on the compact interval $[0,1]$ are contained in 
$L^2[0,1]$, we may use the $L^2$ inner product on $L^2$ functions, denoted by $\langle\,,\rangle$, to compute the selection gradient.
Computing the selection gradient amounts to taking the functional derivative of $E[f,g]$ at $g=f$. 
That is, $\langle\nabla E(f),v\rangle = \left.\frac{d}{dt}\right|_{t=0} E[f,f+tv]$, for any $v$ in $L^2[0,1]$.
By computing this for arbitrary $v$ we find the selection gradient 
\begin{equation}
\nabla E(f)(x) = \int_0^x f(y)\,dy - \int_x^1 f(y)\,dy.
\label{eq:selection-grad}
\end{equation} 
The selection gradient $\nabla E$ maps a function $f:[0,1]\to[0,\infty)$ onto the difference between the integral of $f$ over $[0,x]$ and the integral of $f$ over $[x,1]$.
We let~(\ref{eq:selection-grad}) define $\nabla E(f)$ for any strategy $f$ of a continuous variable, while the discrete strategies' selection gradient is given in Section~\ref{sec:computations}.

The definitions of continuous and bounded measurable strategies \eqref{eq:bm_cont_setup} requires that the function $f$ be non-negative. Yet, in the dynamics there has to be a possibility that the function $f$ decreases at some $x\in[0,1]$.  Consequently, the dynamics cannot be restricted to the space of strategies. 
If $\nabla E(f)(x)\geq 0$ would be true for all $x\in[0,1]$ then the only possible change to $f$ would be that it grows. 
Therefore, we will consider the adaptive dynamics for functions in the $L^p[0,1]$ spaces for $1\leq p\leq \infty$. Although this approach does not preserve non-negativity, we can make the dynamics respect the MCA constraint \eqref{eq:bm_cont_setup}.  Similarly, we should necessarily ensure that $f$ stays measurable and bounded under the dynamics if $f$ was measurable and bounded to begin with.  These requirements are treated in the following.

\subsubsection{Global inequality constraints}\label{sec:global}
The procedures that are needed to deal with inequality constraints are described by Dieckmann \textit{et al.}~\cite{dieckmann2006adaptive}. 
Since only inequality constraints are treated in the context of the current work, we focus on such constraints here.  The global inequality constraints are of the form 
\[ w(f)\leq 0 \]
for all $f$ under consideration (eg $f \in L^p[0,1]$ for some $1 \leq p \leq \infty$).  Here $w$ maps the function space under consideration to $\R$ and is chosen based on the physical constraints in the modeling situation. 
Starting from a variance-covariance function $U_f(x,y)$, the following transformed variance-covariance function ensures that $w(f)\leq 0$ is satsified by the dynamics:
\[
\sigma^2_f(x,y)=\int_\Omega\int_\Omega \tilde{P}(x,r)U_f(r,s)\tilde{P}(s,y)\,dr\,ds.
\]
Here, the projection $\tilde{P}$ is defined by the equation 
\[
\tilde{P}(x,y)=\delta_x(y)-\tilde{N}_f(x)\tilde{N}_f(y)H(w(f))H\left(\int_\Omega g_f(z)\tilde{N}_f(z)dz\right),
\]
where $H$ is the Heaviside function (i.e., the indicator function supported on $x\geq 0$) and
\[
\tilde{N}_f(x)=\frac{N(x)}{\sqrt{\int_\Omega (N(y))^2\,dy}},\quad N(x)=\left.\frac{d}{dt}\right|_{t=0} w(f+t\delta_x).
\]
Global \emph{equality} constraints can be accounted for by removing the factor $H(w(f))$ from the above expression for $\tilde{P}(x,y).$  Notice that Dieckmann \textit{et al.}~\cite{dieckmann2006adaptive} use the opposite sign convention on the inequality constraint, which implies that also $N$ has the opposite sign in this presentation.

\section{Adaptive dynamics for the function-valued team game}  \label{sec:analysis}

In this section, we first consider the constraints on the strategies.  Next, in \S \ref{sec:kernel}, we prove a handful of results concerning the right hand side of the canonical equation~(\ref{eq:master}). This helps us understand the dynamics, which is the topic of \S \ref{sec:existence}.  Finally, we investigate the implications of these results for the evolution of strategies in the function-valued team game in \S \ref{sec:game-dynamics}.

\subsection{The team game adaptive dynamics and constraints} \label{sec:constraints}

As explained above, the selection gradient maps the non-negative elements of $L^\infty[0,1]$, denoted $\cL^\infty_+[0,1]$ into $L^\infty[0,1]$ (i.e., not into $\cL^\infty_+[0,1]$).
Since $L^\infty[0,1]$ is a subspace of $L^2[0,1]$, the inner product of $L^2$ can be used to project the selection gradient onto the subspace of functions that satisfy the MCA constraint. This constraint can be expressed as 
\begin{equation}
w(f)\leq 0\quad\text{ for }\quad w(f)=\int_0^1 (x-1/2)f(x)\,dx.
\label{eq:w}
\end{equation}
The projection onto the tangent of the boundary $w(f)=0$ is thus
\begin{equation*}
P(f) = \frac{\langle f, \nabla\hspace{-0.15em} w\rangle}{\|\nabla\hspace{-0.15em} w\|^2} \nabla\hspace{-0.15em} w,\qquad \nabla w(x)=x-\frac 1 2.
\end{equation*}
Here, $\langle \, ,\rangle$ is the $L^2$ inner product on $[0,1]$, and $\|\nabla w\|^2=\langle \nabla w,\nabla w\rangle$. 
Figure~\ref{fig:levelset} visualizes the level set defined by $w(f)=0$ and the vectors that are parallel and orthogonal to the same level set.
The projection of the selection gradient $\nabla E$ onto the normal direction of $w(f)=0$ is given by
\begin{equation}
P\big(\nabla E(f)\big)(x) = \frac{\int_0^1 (y-1/2)\left(\int_0^y f(z)\,dz-\int_y^1 f(z)\,dz\right)dy}{\int_0^1 (y-1/2)^2 dy}(x-1/2).
\label{eq:pi}
\end{equation}
Notice that $P$ maps any function onto a linear function on $\R$.
Removing the component of the selection gradient $\nabla E(f)$ which is normal to $w(f)=0$ is achieved by projection with $1-P$, where $1$ is the identity mapping. 
This ensures that the constraint in \eqref{eq:w} is respected at all times.

\begin{figure}
  \centering
  \includegraphics[width=0.5\columnwidth]{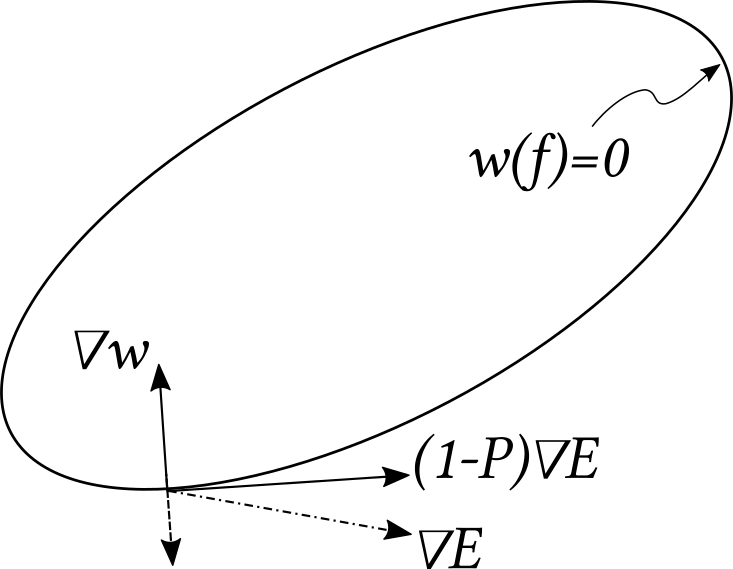}
  \caption{Projection onto the set of functions constrained by $w(f)=0$. The selection gradient belongs to a subspace of $L^2[0,1]$ and the normal component, $P(\nabla E)$, is removed. Thus, $w\big((1-P)\nabla E\big)=0.$}
  \label{fig:levelset}
\end{figure}

These results are consistent with the treatment of global inequality constraints as described by Dieckmann \textit{et al.}~\cite{dieckmann2006adaptive}. 
We support this claim here.
First, notice that 
\[
H\left(\int_\Omega g_f(x)\tilde{N}_f(x)dx\right) = H\left(\sqrt{12}\int_0^1 \left(x-\frac{1}{2}\right)\left(\int_0^x f(y)\,dy-\int_x^1 f(y)\,dy\right)dx\right).
\]
By integration by parts 
\begin{equation}
\label{eq:ibp-grad}
\int_0^1 (x-\tfrac{1}{2})\left(\int_0^x f(y)\,dy-\int_x^1 f(y)\,dy\right)dx = -\int_0^1 (x^2-x)f(x)\,dx\geq 0.
\end{equation}
Therefore, $H(\int_\Omega g_f(x)\tilde{N}_f(x)dx)=1$.
Then
\[
\tilde{P}(x,y)=\delta_x(y)-H(w(f))12(x-\tfrac{1}{2})(y-\tfrac{1}{2}).
\]
Here, $w$ is defined in~(\ref{eq:w}).
This and the general framework in Section~\ref{sec:global} gives
\begin{multline*}
\sigma^2_f(x,y)=\int_0^1\left(U_f(x,s)-12H(w(f))(x-\tfrac{1}{2})\int_0^1(r-\tfrac{1}{2})U_f(r,s)\,dr\right)\tilde{P}_f(s,y)ds\\
=U_f(x,y)-12H(w(f))(x-\tfrac{1}{2})\int_0^1(r-\tfrac{1}{2})U_f(r,y)\,dr\\
-12H(w(f))(y-\tfrac{1}{2})\int_0^1(s-\tfrac{1}{2})U_f(x,s)\,ds\\
+12^2H(w(f))^2(x-\tfrac{1}{2})(y-\tfrac{1}{2})\int_0^1\int_0^1(r-\tfrac{1}{2})U_f(r,s)\,dr (s-\tfrac{1}{2})ds.
\end{multline*}
Notice that $H(w(f))^2=H(w(f))$ by the definition of the Heaviside function.
In the special case $U_f(x,y)=\delta_x(y)$ we obtain
\begin{multline*}
\sigma^2_f(x,y)=\delta_x(y)-12H(w(f))(x-\tfrac{1}{2})(y-\tfrac{1}{2})
-12H(w(f))(x-\tfrac{1}{2})(y-\tfrac{1}{2})\\
+12^2H(w(f))(x-\tfrac{1}{2})(y-\tfrac{1}{2})\underbrace{\int_0^1(s-\tfrac{1}{2})^2ds}_{=1/12}
=\delta_x(y)-12(x-\tfrac{1}{2})(y-\tfrac{1}{2}) H(w(f)).
\end{multline*}
Applying this variance-covariance function is equivalent to the projection by $1-P$, where $P$ is defined in~(\ref{eq:pi}). 
The full adaptive dynamics of the team game constrained to MCA$(f)\leq \frac{1}{2}$ is given by the initial-value problem 
\begin{equation}
\frac{\partial}{\partial t} f = (1-H(w(f))P)\nabla E(f),\qquad\text{ with } \left. f\right|_{t=0} = f_0.
\label{eq:main}
\end{equation}
%
Here $f=f(x,t)$ is a one-parameter family of functions, depending on the parameter $t$, mapping $ x\in [0,1]$ to $\R$, and $P$ is defined by \eqref{eq:pi}.  We refer to $(1-P)\nabla E$ as the \em constrained selection gradient, \em whereas $\nabla E$ is the \em unconstrained selection gradient. \em  With initial conditions, \eqref{eq:main} is a Cauchy problem.  We will show in \S \ref{sec:existence} that for every initial condition in $L^p[0,1]$, for all $1 \leq p \leq \infty$ the solution is a curve $\alpha(t)$ with time-parameter $t$ that remains in the same $L^p$ space.  In the language of ODE theory \cite{lang1993} equation~(\ref{eq:main}) defines an integral curve of the vector field $(1-P)\nabla E$.

\subsection{Results on the constrained and unconstrained selection gradients}  \label{sec:kernel}

In the adaptive dynamics framework, the selection gradient is the driving force behind the evolution of the function-valued traits. 
If the initial data fulfil the criteria to be strategies in the team game, then equation \eqref{eq:main} describes their adaptive dynamics.
The mathematical properties of the selection gradient predict the behavior of the evolution on both long and short timescales.
A function $f$ from $[0,1]$ to $\R$ is mapped by the (constrained or unconstrained) selection gradient onto a function with properties that depend on the original properties of $f$, as shown in Lemma \ref{lem:bound-p}.

We begin by calculating basic properties of the unconstrained selection gradient.  It is a linear: 
\begin{equation*}
\nabla E(f+g)(x) = \nabla E(f)(x) + \nabla E(g)(x).
\end{equation*}
Moreover, $\nabla E(f)$ is a non-decreasing function, which is clear once we rewrite~(\ref{eq:selection-grad}) as 
\beq
\nabla E(f)(x) = 2\int_0^x f(y)\,dy - \int_0^1 f(y)\,dy. \label{eq:sg_nondecreasing}
\eeq

The unconstrained selection gradient $\nabla E$ is an integral operator with kernel $s(x,y)$ defined by 
\begin{equation}
s(x,y) = \chi_{[0,x)}(y)-\chi_{(x,1]}(y) = 
\begin{cases}
-1, & x<y,\\
1, & x>y,
\end{cases} 
\label{eq:E-kernel}
\end{equation}
so that 
\[
 \nabla E(f)(x) = \int_0^1 s(x,y)f(y)\, dy.
\]
The kernel is constant above and below the diagonal $x=y$. 
It is weakly singular on the diagonal, that is, it is undefined on the set $x=y$ with $x,y\in[0,1]$.

The constrained selection gradient may also be defined as a kernel operator.  For this it is convenient to introduce the notation 
\beq A f(x) &=& (1-H(w(f))P) \nabla E f(x)  \nn \\ 
&=& \int_0^x f(y)\,dy -\int_x^1 f(y)\,dy \nn \\
& &-12 H(w(f)) (x-\tfrac{1}{2})\int_0^1\left(y-\frac{1}{2}\right)\left(\int_0^y f(z)\,dz -\int_y^1 f(z)\,dz\right)dy.
\label{eq:def-A}
\eeq 
%
Above, $H$ is the Heaviside function, $12 = \langle x-\frac{1}{2},x-\frac{1}{2}\rangle^{-1}$ is a normalization factor, and $w$ is defined in \eqref{eq:w}.
In order to define $A$ as a kernel operator, recall the integration by parts from equation~(\ref{eq:ibp-grad}), and let $s$ be the kernel of $\nabla E$ as in~(\ref{eq:E-kernel}).  As a kernel operator, $A$ is then given by
\begin{equation}
\begin{split}
&Af(x)=\int_0^1 k(x,y)f(y)\,dy,\\ &\text{with}\quad 
k(x,y) = s(x,y)+12 H(w(f))\left(x-\tfrac{1}{2}\right)\left(y^2-y\right).
\end{split}
\label{eq:A-kernel}
\end{equation}
Notice that if we accept distributions in our theory we may write the mapping $A$ as a kernel operator on the gradient:
\begin{equation}
\begin{split}
&Af(x)=\int_0^1 \sigma^2(x,y)\nabla E(f)(x)\,dy,\\
&\text{with}\quad
\sigma^2(x,y) = \delta_x(y)-12H(w(f))\left(x-\tfrac{1}{2}\right)\left(y-\tfrac{1}{2}\right).
\end{split}
\label{eq:A-of-gradient}
\end{equation}
Here, we denote the kernel by $\sigma^2$, since that correctly describes the connection to the adaptive dynamics framework of Dieckmann \textit{et al.}~\cite{dieckmann2006adaptive}.
This notation matches theirs, as can be seen in equation~(\ref{eq:master}).
In the case $\mca(f)<\frac{1}{2}$, the second term of $\sigma^2$ is left out, that is, $\sigma^2(x,y)=\delta_x(y)$.
This, too, is consistent with the framework by Dieckmann \textit{et al.}~\cite{dieckmann2006adaptive}. 
They further remark that the ``boundary layer induced by inequality constraints will be very narrow whenever the canonical equation offers a valid description.'' 
Therefore, there is no smooth transition from $\sigma^2(x,y)=\delta_x(y)$ to $\sigma^2(x,y)=\delta_x(y)-12(x-\frac{1}{2})(y-\frac{1}{2})$.
The change is abrupt.
Some consequences of this will be described in Section~\ref{sec:discussion}.

The following Lemma shows that both the constrained \eqref{eq:main} and the unconstrained \eqref{eq:selection-grad} selection gradient enjoy certain mapping properties.

\begin{lemma} 
The constrained and unconstrained selection gradients, $A$ and $\nabla E$, respectively defined in \eqref{eq:def-A} and \eqref{eq:selection-grad}, are bounded operators from $L^p[0,1]$ into $L^p[0,1]$ for all $1\leq p\leq\infty$.  Moreover, they satisfy the following mapping properties: 
\begin{align}
L^p[0,1] &\mapsto W^{1,p}[0,1],\quad 1\leq p\leq \infty, \label{A-3}\\
C^k[0,1] &\mapsto C^{k+1}[0,1], \quad 0 \leq k < \infty.                      \label{A-4}
\end{align}
\label{lem:bound-p}
\end{lemma}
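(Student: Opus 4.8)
The plan is to exploit the fact that both selection gradients are, up to a bounded correction, antiderivative operators; this makes the regularity gains transparent while keeping the boundedness estimates elementary. I would treat the three assertions — boundedness on $L^p$, the gain $L^p \to W^{1,p}$, and the gain $C^k \to C^{k+1}$ — in turn, always handling $\nabla E$ first and then reducing $A$ to $\nabla E$ plus an affine correction via \eqref{eq:def-A}.

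For boundedness I would start from the rewritten form \eqref{eq:sg_nondecreasing}, $\nabla E(f)(x) = 2\int_0^x f - \int_0^1 f$. Since the kernel satisfies $|s(x,y)| \le 1$ and $[0,1]$ is a probability space, the pointwise estimate
\[
|\nabla E(f)(x)| \le \int_0^1 |f(y)|\,dy = \|f\|_{L^1} \le \|f\|_{L^p},
\]
where the last step is H\"older's inequality with $\|1\|_{L^{p'}} = 1$, gives $\|\nabla E(f)\|_{L^\infty} \le \|f\|_{L^p}$; because the interval has unit measure, $\|\nabla E(f)\|_{L^p} \le \|\nabla E(f)\|_{L^\infty} \le \|f\|_{L^p}$ for every $p$. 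For $A$ I would observe from \eqref{eq:def-A} that $A(f)$ is $\nabla E(f)$ minus the term $12\,H(w(f))(x-\tfrac12)\int_0^1 (y-\tfrac12)\nabla E(f)(y)\,dy$; here the scalar integral is bounded by a constant multiple of $\|f\|_{L^p}$ (by H\"older, or via the integration-by-parts identity \eqref{eq:ibp-grad}), the factor $|x-\tfrac12| \le \tfrac12$, and $H(w(f)) \in \{0,1\}$, so the correction is again controlled in $L^\infty$, hence in $L^p$. This yields a uniform estimate $\|A(f)\|_{L^p} \le C_p \|f\|_{L^p}$.

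For the regularity statements the key fact is that $x \mapsto \int_0^x f(y)\,dy$ is absolutely continuous with a.e.\ derivative $f$ whenever $f \in L^1[0,1]$. Thus $\nabla E(f)$ has weak derivative $(\nabla E(f))' = 2f \in L^p$, and combined with the $L^p$ bound above this places $\nabla E(f)$ in $W^{1,p}$ for all $1 \le p \le \infty$ (for $p=\infty$ this is simply the assertion that $\nabla E(f)$ is Lipschitz). The correction term for $A$ equals $c(f)(x-\tfrac12)$ with $c(f)$ constant in $x$, whose derivative is the constant $c(f) \in L^p$, so $A(f) \in W^{1,p}$ as well, with $(A(f))'(x) = 2f(x) + c(f)$. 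For the continuous scale, the fundamental theorem of calculus shows $\int_0^x f \in C^{k+1}[0,1]$ whenever $f \in C^k[0,1]$, so $\nabla E(f)$ lies in $C^{k+1}$; the correction term is a polynomial in $x$ and therefore $C^\infty$, hence $A(f) \in C^{k+1}[0,1]$ too.

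The only genuinely delicate point — and the place I would be most careful — is that $A$ is \emph{not} linear: the factor $H(w(f))$ makes it piecewise linear, equal to $\nabla E$ on $\{w(f)\le 0\}$ and to $(1-P)\nabla E$ on $\{w(f)>0\}$. I would therefore read ``bounded operator'' as the uniform norm estimate $\|A(f)\|_{L^p} \le C_p\|f\|_{L^p}$, which holds with a single constant on both pieces precisely because $H(w(f))$ contributes only a factor in $\{0,1\}$; all the bounds above are arranged to be valid uniformly in that factor. With this understanding, each assertion for $A$ follows from the corresponding assertion for $\nabla E$ together with the elementary estimates on the affine correction, and no further obstacle arises.
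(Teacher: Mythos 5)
Your proposal is correct and follows essentially the same route as the paper's proof: H\"older's inequality (with the kernel bound $|s(x,y)|\le 1$, equivalently $\|\chi_{[0,x]}-\chi_{[x,1]}\|_{p'}=1$) for the $L^p$ bounds, the decomposition of $A$ into $\nabla E$ plus the projection term estimated by $12\|x-\tfrac12\|_p\|x-\tfrac12\|_{p'}\|f\|_p$, and the identity $\frac{d}{dx}\nabla E(f)(x)=2f(x)$ (with the affine correction for $A$) for the $W^{1,p}$ and $C^{k+1}$ gains. Your closing observation that $A$ is only piecewise linear because of the factor $H(w(f))$, so that ``bounded'' must be read as the uniform estimate $\|A(f)\|_p\le C_p\|f\|_p$ valid on both pieces, is a point the paper leaves implicit and is a welcome clarification rather than a deviation.
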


Here, $W^{1,p}[0,1]$ is the Sobolev space of functions in $L^p[0,1]$ such that their (weak) derivatives of first order are contained in $L^p[0,1]$.

\begin{proof}
The key to this proof is Hölder's inequality, $\|fg\|_1\leq \|f\|_p\|g\|_{p'}$, on $f\in L^p[0,1]$ and $g\in L^{p'}[0,1]$, where $1/p+1/p'=1$.
If $p=\infty$ then $p'=1$.
Consider first the unconstrained selection gradient: 
\begin{equation*}
\left|\nabla E(f)(x)\right|=\left|\int_0^x f(y)\,dy -\int_x^1 f(y)\,dy\right| = \left|\int_0^1 (\chi_{[0,x]}(y)-\chi_{[x,1]}(y))f(y)\,dy\right|.
\end{equation*}
Then apply Hölder's inequality:
\begin{multline*}
\left|\int_0^1 (\chi_{[0,x]}(y)-\chi_{[x,1]}(y))f(y)\,dy\right| \leq \|\chi_{[0,x]}-\chi_{[x,1]}\|_{p'}\|f\|_p \\
= \|f\|_p\left(\int_0^1|\chi_{[0,x]}(y)-\chi_{[x,1]}(y)|^{p'}dy\right)^{1/p'}=\|f\|_p\left(\int_0^1\,dy\right)^{1/p'}=\|f\|_p.
\end{multline*}
Therefore, $|\nabla E(f)(x)|^p \leq \|f\|_p^p$ which implies 
\begin{equation}
\label{eq:bound-E}
\int_0^1|\nabla E(f)(x)|^p dx\leq \|f\|_p^p.
\end{equation}
This is true also in the case $p=1$, since $\|\chi_{[0,x]}-\chi_{[x,1]}\|_\infty\|f\|_1=\|f\|_1.$  In case $p=\infty$ we have 
\[ |\nabla E f(x)| \leq \int_0 ^x ||f||_\infty + \int_x ^1 ||f||_\infty = ||f||_\infty \implies ||\nabla E f||_\infty \leq ||f||_\infty.\]

In order to show the same type of estimate on the constrained selection gradient we compute
\begin{multline*}
\|P\big(\nabla E(f)\big)\|_p = \left(\int_0^1|x-\tfrac{1}{2}|^{p}dx\left|12\int_0^1\big(y-\tfrac{1}{2}\big)\big(\nabla E(f)\big)dy\right|^p\right)^{1/p}\\
=12\|x-\tfrac{1}{2}\|_p\left|\int_0^1\big(y-\tfrac{1}{2}\big)\big(\nabla E(f)\big)dy\right|\\
\leq 12\|x-\tfrac{1}{2}\|_p \|x-\tfrac{1}{2}\|_{p'} \|\nabla E(f)\|_p \leq 12\|x-\tfrac{1}{2}\|_p \|x-\tfrac{1}{2}\|_{p'} \|f\|_p.
\end{multline*}
Here, we used our previous result in equation~(\ref{eq:bound-E}).
If $p=\infty$ then
\begin{multline*}
\|P\big(\nabla E(f)\big)\|_\infty=\sup_{x\in[0,1]} 12|x-\tfrac{1}{2}|\left|\int_0^1 (y-\tfrac{1}{2})\left(\int_0^y f(x)\,dx-\int_y^1 f(x)\,dx\right) dy\right| \\
=12\|x-\tfrac{1}{2}\|_\infty\left|\int_0^1 (y-\tfrac{1}{2})\left(\int_0^y f(x)\,dx-\int_x^1 f(x)\,dx\right) dy\right|\\
\leq 12\|x-\tfrac{1}{2}\|_\infty\|x-\tfrac{1}{2}\|_1\left\|\int_0^x f(y)\,dy-\int_x^1 f(y)\,dy\right\|_\infty \\
\leq 12\|x-\tfrac{1}{2}\|_\infty\|x-\tfrac{1}{2}\|_1\|f\|_\infty.
\end{multline*}
Collecting the above results, we conclude that
\begin{equation*}
\|(1-P)\nabla E(f)\|_p \leq \|\nabla E(f)\|_p+\|P(\nabla E(f))\|_p
\leq \big(1+12\|x-\tfrac{1}{2}\|_p \|x-\tfrac{1}{2}\|_{p'}\big)\|f\|_p.
\end{equation*}
This proves that $\|A(f)\|_p \leq L\|f\|_p$ where $L=\big(1+12\|x-\tfrac{1}{2}\|_p \|x-\tfrac{1}{2}\|_{p'}\big).$  

For the regularity results, equation~(\ref{eq:selection-grad}), immediately gives
\begin{equation}
\label{eq:ddxE}
\frac{d}{dx}\, \nabla E(f)(x) = 2f(x).
\end{equation}
%
This immediately implies \eqref{A-3} for the unconstrained selection gradient.  Equation~(\ref{eq:ddxE}) also shows that if $f\in C^k[0,1]$ then $\nabla E(f)\in C^{k+1}[0,1]$, which gives \eqref{A-4} for the unconstrained selection gradient.   For the constrained selection gradient we compute 
\beq \frac{d}{dx} A f(x) = 2 f(x) - 12 H(w(f)) \int_0 ^1 \left( y - \frac 1 2 \right) \left( \int_0 ^y f(z) dz - \int_y ^1 f(z) dz\right) dy. \nn 
\eeq
This implies the mapping properties \eqref{A-3} and \eqref{A-4} for the constrained selection gradient as well. 
\end{proof}

To investigate further properties of the constrained selection gradient, we begin by computing that  
\[
\sup_{x\in[0,1]} \int_0^1 |k(x,y)|\,dy \not< 1.
\]
In the sense of Kress \cite{kress2014}, $A$ is not a contraction. 
Therefore, existence and uniqueness of a solution to a Fredholm type integral equation cannot be established by Neumann series, since that would require that $A$ is a contraction. It is however a compact mapping from $L^p[0,1]$ to $L^q[0,1]$ for all $p\in(1,\infty]$ and $q\in[1,\infty)$.

\begin{proposition} \label{prop:compact} 
Both the constrained and unconstrained selection gradients are compact mappings from $L^p[0,1]$ to $L^q[0,1]$ for all $p>1,$ including $p=\infty$, and $q$ such that $1\leq q< \infty$. 
\end{proposition}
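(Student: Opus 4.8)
The plan is to reduce the compactness of the nonlinear constrained gradient $A$ to the compactness of two \emph{linear} operators, and then to establish compactness of those via the regularity already recorded in Lemma~\ref{lem:bound-p} together with the Arzel\`a--Ascoli theorem. First I would dispose of the nonlinearity. Since the Heaviside factor $H(w(f))$ takes only the values $0$ and $1$, equation~\eqref{eq:def-A} shows that for every $f$ one has either $Af=\nabla E(f)$ or $Af=(1-P)\nabla E(f)$. Hence for any bounded set $B\subset L^p[0,1]$ the image $A(B)$ is contained in $\nabla E(B)\cup(1-P)\nabla E(B)$, and a finite union of relatively compact sets is relatively compact, so it suffices to prove that the two linear operators $\nabla E$ and $(1-P)\nabla E$ are compact. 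Moreover the projection $P$ of \eqref{eq:pi} has range spanned by $x-\tfrac12$, so $P\nabla E$ is a rank-one, hence compact, operator; consequently $(1-P)\nabla E=\nabla E-P\nabla E$ is compact as soon as $\nabla E$ is. Everything therefore reduces to the compactness of the single linear operator $\nabla E\colon L^p\to L^q$.

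For that reduced statement I would use Arzel\`a--Ascoli. Writing $\nabla E$ as in \eqref{eq:sg_nondecreasing}, Lemma~\ref{lem:bound-p} already gives, for $\|f\|_p\le R$, the uniform bound $\|\nabla E(f)\|_\infty\le\|f\|_p\le R$ (from $\|s(x,\cdot)\|_{p'}=1$). For equicontinuity I would estimate, for $x,x'\in[0,1]$,
\[
\bigl|\nabla E(f)(x)-\nabla E(f)(x')\bigr|=2\left|\int_{x'}^x f(y)\,dy\right|\le 2\|f\|_p\,|x-x'|^{1/p'}\le 2R\,|x-x'|^{1/p'},
\]
by H\"older's inequality with conjugate exponent $p'$. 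Because $p>1$ forces $p'<\infty$, the exponent $1/p'$ is strictly positive and this furnishes a uniform modulus of continuity for the whole family $\{\nabla E(f):\|f\|_p\le R\}$ (for $p=\infty$ one has $p'=1$ and the bound is Lipschitz). The family is thus uniformly bounded and equicontinuous, so by Arzel\`a--Ascoli it is relatively compact in $C[0,1]$. Since $C[0,1]$ embeds continuously into $L^q[0,1]$ for every $q<\infty$, the image is relatively compact in $L^q[0,1]$, which is precisely compactness of $\nabla E\colon L^p\to L^q$.

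The \emph{main obstacle} is conceptual rather than computational, and it lies in the first step: $A$ is not linear because of the $H(w(f))$ term, so compactness cannot be quoted directly from linear operator theory. The cleanest remedy is the dichotomy above, expressing $A(B)$ as the union of the images of two linear operators, each of which is compact. The role of the hypothesis $p>1$ is also worth flagging, since it enters exactly in the equicontinuity estimate: for $p=1$ one controls only $\int_{x'}^x|f|$, which is not uniformly small over all $f$ of bounded $L^1$-norm (mass can concentrate), so the family fails to be equicontinuous and the Arzel\`a--Ascoli argument breaks down. As an alternative to Arzel\`a--Ascoli I would note that one could instead invoke the compact Sobolev embedding $W^{1,p}[0,1]\hookrightarrow L^q[0,1]$ combined with the mapping property \eqref{A-3}; I prefer the Arzel\`a--Ascoli formulation here because it makes the necessity of $p>1$ transparent.
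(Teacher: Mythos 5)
Your proof is correct, and it takes a genuinely different route from the paper's. The paper proves Proposition~\ref{prop:compact} in one stroke by citing Jörgens' double-norm criterion: it defines $\|k\| = \bigl(\int_X\bigl(\int_X|k(r,s)|^{p'}\,d\mu(s)\bigr)^{q/p'}d\mu(r)\bigr)^{1/q}$ (sup-norm when $p=\infty$, $q=1$) and uses the fact that a kernel of finite double norm induces a compact operator $L^p(X)\to L^q(X)$; since the kernels $s(x,y)$ of \eqref{eq:E-kernel} and $k(x,y)$ of \eqref{eq:A-kernel} are bounded on $[0,1]^2$, their double norms are finite and the conclusion follows. Your Arzelà--Ascoli argument is more elementary and self-contained, and it buys two things the paper's write-up leaves implicit. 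First, the kernel $k(x,y)$ in \eqref{eq:A-kernel} depends on $f$ through the factor $H(w(f))$, so the constrained gradient $A$ is not a single linear kernel operator; the paper applies the linear criterion without comment, whereas your dichotomy $A(B)\subseteq \nabla E(B)\cup(1-P)\nabla E(B)$, together with the rank-one (hence compact) nature of $P\nabla E$, disposes of this nonlinearity explicitly --- a cleaner and more careful treatment, and the right notion of compactness for a nonlinear map (bounded sets have relatively compact images) is exactly what your union argument delivers. Second, your equicontinuity bound $|\nabla E(f)(x)-\nabla E(f)(x')|\le 2\|f\|_p\,|x-x'|^{1/p'}$ in fact yields relative compactness in $C[0,1]$, hence in $L^q$ for \emph{every} $q\le\infty$, slightly stronger than the stated range $q<\infty$, which the double-norm criterion genuinely requires. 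What the paper's approach buys in exchange is brevity and immediate portability to other kernels; your alternative via \eqref{A-3} and the compact embedding $W^{1,p}[0,1]\hookrightarrow L^q[0,1]$ is equally valid. One small precision on your closing remark: for $p=1$ it is only your \emph{method} (and the double-norm criterion) that fails --- equicontinuity indeed breaks under mass concentration --- but the conclusion itself does not, since $\nabla E$ maps bounded sets of $L^1[0,1]$ into sets bounded in $BV\cap L^\infty$, which are relatively compact in $L^q[0,1]$ for $q<\infty$ by Helly's selection theorem; so ``necessity of $p>1$'' should be read as necessity for the Arzelà--Ascoli scheme, not for the compactness statement.
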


\begin{proof}
Let $(X,\mu)$ be a positive measure space, and let $k:X\times X\to\R$ be a measurable function.
For $p>1$ and $q<\infty$, define $p'=p/(1-p)$ and the ``double norm'' of $k$ by
\begin{equation*}
\|k\| = \left(\int_X\left(\int_X|k(r,s)|^{p'}\,d\mu(s)\right)^{q/p'}d\mu(r)\right)^{1/q}
\end{equation*}
or if $p=\infty$ and $q=1$, then $\|k\| =\sup\{|k(x,y)|,\ x,y\in X\}$.
If the double norm of $k$ is finite, then it defines a compact kernel operator $L^p(X)\to L^q(X)$, see Jörgens \cite{jorgens1982}, page 275--277.
The double norms of the kernels of both the unconstrained and the constrained selection gradients are finite.  
Here the space $X =[0,1]$ is the unit interval, and $\mu$ is the Lebesgue measure. 
\end{proof}

\begin{lemma}
Assume that $f$ is a measurable, bounded, non-negative function defined on the unit interval $[0,1]$, and assume that $0<\int_0^1 f(x)\,dx$. 
If $f$ is increasing (decreasing) then its MCA is bigger (smaller) than or equal to $\frac{1}{2}$.
If $f$ is continuous and strictly increasing then its MCA is strictly bigger than $\frac{1}{2}$.
\label{lem:ineq}
\end{lemma}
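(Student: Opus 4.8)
The plan is to recast the claim $\mca(f)\geq \tfrac12$ in the additive form already isolated earlier in the paper. Since $\int_0^1 f(x)\,dx>0$ by hypothesis, the inequality $\mca(f)\geq \tfrac12$ is equivalent to $\int_0^1\big(x-\tfrac12\big)f(x)\,dx\geq 0$, which is precisely the statement $w(f)\geq 0$ for the functional $w$ of \eqref{eq:w}. Thus the whole lemma reduces to controlling the sign of the single integral $J:=\int_0^1\big(x-\tfrac12\big)f(x)\,dx$, and analogously its strict positivity in the continuous, strictly increasing case.

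To control the sign of $J$ I would symmetrize about the midpoint $x=\tfrac12$. Applying the change of variables $x\mapsto 1-x$ to $J$ and averaging the two resulting expressions gives
\[
2J=\int_0^1\big(x-\tfrac12\big)\big(f(x)-f(1-x)\big)\,dx.
\]
The point of this manipulation is that the integrand is now pointwise nonnegative whenever $f$ is increasing: for $x>\tfrac12$ one has $x>1-x$, so both factors $x-\tfrac12$ and $f(x)-f(1-x)$ are nonnegative; for $x<\tfrac12$ both factors are nonpositive; and at $x=\tfrac12$ the integrand vanishes. Hence $2J\geq 0$, giving $\mca(f)\geq\tfrac12$. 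The decreasing case follows either by reversing every inequality in this pointwise argument, or more cleanly by noting the reflection identity $\mca\big(f(1-\cdot)\big)=1-\mca(f)$ together with the observation that $f(1-\cdot)$ is increasing exactly when $f$ is decreasing.

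For the strict statement, suppose $f$ is continuous and strictly increasing. Then for every $x\in(\tfrac12,1]$ we have $x>1-x$ and therefore $f(x)>f(1-x)$ strictly, so the integrand $\big(x-\tfrac12\big)\big(f(x)-f(1-x)\big)$ is strictly positive there. Being continuous and strictly positive on the interval $(\tfrac12,1]$ of positive measure, it contributes a strictly positive amount to the integral, forcing $2J>0$ and hence $\mca(f)>\tfrac12$. The only point requiring care---the \emph{main obstacle}, such as it is---is precisely this last step: pointwise nonnegativity of the integrand yields only the non-strict inequality, and upgrading to strictness genuinely uses both continuity and \emph{strict} monotonicity to produce a set of positive measure on which the integrand is bounded away from zero. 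Everything else is the routine symmetrization identity and a sign check.
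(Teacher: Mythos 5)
Your proof is correct and takes essentially the same approach as the paper: your symmetrization identity $2J=\int_0^1\bigl(x-\tfrac12\bigr)\bigl(f(x)-f(1-x)\bigr)\,dx$ becomes, after substituting $x=\tfrac12+t$ and folding the even integrand, exactly the paper's reduction to the sign of $\int_0^{1/2}\bigl(f(\tfrac12+t)-f(\tfrac12-t)\bigr)t\,dt$. Both arguments then finish with the same pointwise sign check, upgrading to strict inequality via continuity and strict monotonicity on a set of positive measure.
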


\begin{proof}
By the definition of MCA \eqref{eq:bm_cont_setup}, 
\[ \mca(f)<\frac{1}{2} \iff 
\int_0^1 (x-\frac{1}{2}) f(x)\,dx  < 0. \] 
By a change of variables, this is equivalent to
\[
\int_{-1/2}^{0} f\left(t+\tfrac{1}{2}\right) t \,dt + \int_{0}^{1/2} f\left(t+\tfrac{1}{2}\right) t \,dt  < 0.
\]
By another change of variables, this condition is equivalent to
\[
\int_{0}^{1/2} \left(f\left(\tfrac{1}{2}+t\right)- f\left(\tfrac{1}{2}-t\right)\right)t \,dt < 0.
\]
If $f$ is decreasing then $f(\tfrac{1}{2}+t)- f(\tfrac{1}{2}-t)\leq 0$, and if $f$ is increasing then 
\(
f(\tfrac{1}{2}+t)- f(\tfrac{1}{2}-t)\geq 0.
\)
If $f$ is continuous and strictly increasing, then $f(\tfrac{1}{2}+t)- f(\tfrac{1}{2}-t)>0$ on a set of positive measure in $[0,1/2]$.
The conclusion follows.
\end{proof}

This lemma proves that if function has ``more weight to the right'' of $x=\frac{1}{2}$ then the MCA is bigger than $\frac{1}{2}$.
We can imagine the area under the curve $y=f(x)$ as a mass distribution that balances on a tip positioned at $x=\frac{1}{2}$.
Let $f$ be a probability density function on $[0,1]$.
In particular, $\int_0^1 f(x)\,dx =1.$
The condition 
\(
\mca(f)-\frac{1}{2} = \int_0^1 (x-\frac{1}{2}) f(x)\,dx  >0
\)
means that the mass is tipping towards the right, like in the following picture:

\begin{figure}[H]
 \centering
 \begin{tikzpicture}[domain=0:4]
  \draw[->] (-0.2,0) -- (4.5,0) node[right] {$x$};
  \draw[->] (0,-0.1) node[below] {$0$} -- (0,2.1) node[right] {$y$};
  \draw     (2.0,0)  -- (2,-0.1) node[below] {$\frac{1}{2}$};
  \draw     (4.0,0)  -- (4,-0.1) node[below] {$1$};
  \fill [gray!30!white, domain=0:4, variable=\x]
      (0, 0)
      -- plot ({\x}, { 0.5*(0.1 +\x *sin(\x r) +\x)})
      -- (4, 0)
      -- cycle;
  \draw[color=blue, thick] plot (\x, { 0.5*(0.1 +\x *sin(\x r) +\x)});
  \node at (5.0,0.8) {$y=f(x)$};
 \end{tikzpicture}
 \caption{A function $f$ with $\mca(f)>\frac{1}{2}.$}
\end{figure}
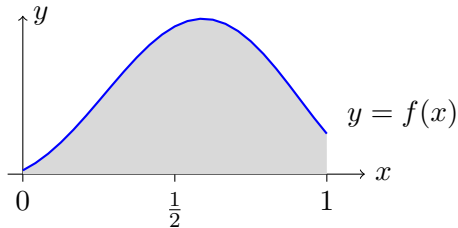

Lemma \ref{lem:ineq} reflects that $\mca(f)$ is the mean value of $f$ if we interpret $f$ as a probability density function (a mixed strategy).  Moreover, the MCA of a probability density function is likewise called the first moment about the point $x=\frac{1}{2}$.

We have thus characterized the mapping properties of the selection gradients in Lemma \ref{lem:bound-p}, and in Proposition \ref{prop:compact} we showed that they are compact mappings from $L^p[0,1]$ to itself for $1< p< \infty$.  In the following theorem we compute the spectrum of the operator $A$.  This result is important to the dynamics, since it determines the stationary solutions to the adaptive dynamics.  

\begin{theorem} 
\label{teo:eigen-f}
Let $A$ be defined by~(\ref{eq:def-A}), and fix some $1 \leq p \leq \infty$.  Let $A$ act on the elements of $L^p[0,1]$. 
Then the only solutions to the eigenvalue problem $Af=\lambda f$ are constant functions, and the corresponding eigenvalue $\lambda=0$.
\end{theorem}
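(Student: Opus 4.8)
The plan is to exploit that $A$ acts \emph{linearly} once the value of $H(w(f))$ has been fixed, and so to split the analysis according to whether $w(f)<0$ (in which case $A=\nabla E$) or $w(f)\geq 0$ (in which case $A=(1-P)\nabla E$), checking self-consistency of the chosen alternative at the end. A preliminary remark makes the differentiations below legitimate: for any candidate eigenvalue $\lambda\neq0$ one has $f=\lambda^{-1}Af$, and since $Af$ is absolutely continuous for every $f\in L^1[0,1]$ (indeed $A$ gains a derivative by \eqref{A-3}--\eqref{A-4} of Lemma \ref{lem:bound-p}), a bootstrap forces $f\in C^\infty[0,1]$, so the pointwise identities used below hold classically.

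The key step is to differentiate the eigenvalue equation. By \eqref{eq:ddxE} we have $\frac{d}{dx}\nabla E(f)=2f$, and the correction term in \eqref{eq:def-A} is $(x-\tfrac{1}{2})$ times a quantity constant in $x$; hence differentiating $Af=\lambda f$ gives the first-order linear ODE $\lambda f'(x)=2f(x)-c$, where $c=12H(w(f))\int_0^1(y-\tfrac{1}{2})\nabla E(f)(y)\,dy$. Thus every eigenfunction with $\lambda\neq0$ must take the form $f(x)=Ke^{2x/\lambda}+c/2$, and it only remains to decide, case by case, whether such a function truly solves the integral equation $Af=\lambda f$ rather than merely its derivative.

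If $w(f)<0$, then $c=0$ and $f(x)=Ke^{2x/\lambda}$; evaluating $Af=\lambda f$ at $x=0$, where $Af(0)=\nabla E(f)(0)=-\int_0^1 f$, and dividing by $\lambda K\neq0$ yields $e^{2/\lambda}=-1$, which is impossible for real $\lambda$. If $w(f)\geq0$, I would first note that $1-P$ is the orthogonal projection off $\mathrm{span}(x-\tfrac{1}{2})$, so that $\langle Af,\,x-\tfrac{1}{2}\rangle=0$ for every $f$; since $\lambda\neq0$, this forces $w(f)=\langle f,\,x-\tfrac{1}{2}\rangle=0$, consistently with $H(0)=1$. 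Imposing $w(f)=0$ on $f(x)=Ke^{2x/\lambda}+c/2$ annihilates the constant part and leaves $K\int_0^1(x-\tfrac{1}{2})e^{ax}\,dx=0$ with $a=2/\lambda$; if $K=0$ then $f$ is constant and $Af=0$ forces $f\equiv0$, so $K\neq0$ and the integral must vanish, i.e.\ $h(a):=e^{a}(2-a)-(2+a)=0$.

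The main obstacle is to rule out nonzero real roots of this transcendental equation. I would argue by elementary calculus: $h(0)=h'(0)=h''(0)=0$ while $h'''(0)=-1$, and because $h''(a)=-ae^{a}$ changes sign only at $a=0$, one finds $h'(a)<0$ for all $a\neq0$, so $h$ is strictly decreasing and vanishes only at $a=0$. Since $a=0$ corresponds to $\lambda=\infty$, no eigenfunction with $\lambda\neq0$ exists in either case. Finally, for $\lambda=0$ the same differentiation gives $2f(x)=c$, so $f$ is constant; and a direct check (a constant $f\equiv\gamma$ has $w(f)=0$ and $\nabla E(f)=2\gamma(x-\tfrac{1}{2})$, which $1-P$ annihilates) confirms $Af=0$. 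Hence the eigenfunctions are precisely the constant functions, all with eigenvalue $\lambda=0$.
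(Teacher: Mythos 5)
Your proof is correct, and it shares the paper's backbone --- differentiate the eigenvalue equation via \eqref{eq:ddxE} to obtain the first-order ODE $\lambda f' = 2f - c$, so that any eigenfunction with $\lambda\neq 0$ is of the form $Ke^{2x/\lambda}+\mathrm{const}$ --- but you close the argument differently in both cases. In the case $w(f)\geq 0$, the paper substitutes the exponential ansatz back into the full integral equation $\lambda f = Af$ and compares $x$-dependence (constant left side versus a term linear in $x$ on the right), with the crucial nonvanishing coefficient $\int_0^1(y-\tfrac12)e^{2y/\lambda}\,dy\neq 0$ supplied by Lemma~\ref{lem:ineq} (strict monotonicity of $e^{2y/\lambda}$). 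You instead exploit the structural identity $\langle Af,\,x-\tfrac12\rangle=0$, valid because $1-P$ projects off $\operatorname{span}(x-\tfrac12)$, to force $w(f)=0$ for $\lambda\neq0$, and then rule out the resulting transcendental equation $h(a)=e^a(2-a)-(2+a)=0$, $a=2/\lambda$, by elementary calculus ($h''(a)=-ae^a$, hence $h'<0$ off $a=0$); note that your $h(a)\neq0$ for $a\neq0$ is exactly the explicit form of the nonvanishing that Lemma~\ref{lem:ineq} delivers abstractly, since $\int_0^1(x-\tfrac12)e^{ax}\,dx=-h(a)/(2a^2)$. In the case $w(f)<0$, where the paper only says the computation is ``almost identical,'' your one-point evaluation at $x=0$ giving $e^{2/\lambda}=-1$ is a clean and fully explicit substitute; it is legitimate because the ODE guarantees $Af-\lambda f$ is constant in $x$, so checking a single point suffices (and in any event you are deriving a contradiction from necessary conditions). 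Two further points where you are more careful than the paper: the regularity bootstrap via \eqref{A-3}--\eqref{A-4}, which justifies the pointwise differentiation that the paper performs tacitly for $\lambda\neq0$, and the consistency check that $w(f)=0$ is compatible with the convention $H(0)=1$. What the paper's route buys is economy --- it reuses Lemma~\ref{lem:ineq}, which is needed elsewhere --- while yours buys self-containedness and makes the case analysis of the Heaviside factor fully explicit.
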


\begin{proof}
We first assume that  $w(f)\geq 0$.
Then $H(w(f))=1$ in the definition~(\ref{eq:def-A}).
If $\lambda=0$, then we solve $Af=0$, which using \eqref{eq:sg_nondecreasing} is equivalent to 
\begin{equation*}
2\int_0^x f(y)\,dy - \int_0^1 f(y)\,dy 
= 12(x-\tfrac{1}{2}) \int_0^1 (y-\tfrac{1}{2}) \left( 2\int_0^y f(z)dz - \int_0 ^1 f(z) dz \right)\,dy.
\end{equation*}
The right side is a differentiable function of $x$, so the left side is also, and differentiating both sides we obtain that $f(x)$ is constant.
This completes the proof in this case. 


Assume that $\lambda\neq 0$. 
Taking the derivative of $\lambda f(x)=Af(x)$ we get 
\begin{equation*}
\lambda f'(x) = 2f(x)-12\int_0^1 \left(y-\frac{1}{2}\right)\left(\int_0^y f - \int_y^1 f\right)dy 
\end{equation*}
This is an integro-differential equation of Fredholm type with separable kernel, so the solution is found by setting the integral to some fixed real number $\beta$ and solve for $\beta$ at a later step, see Kress \cite{kress2014}. 
We obtain $\lambda f'-2f=\beta$. 
The solution is 
\begin{equation*}
f(x) = a e^{2x/\lambda} - \frac{\beta}{2}
\end{equation*}
for some constant $a$. 
Inserting this into $\lambda f=Af$, we find
\begin{multline*}
\lambda a e^{2x/\lambda} - \lambda \frac{\beta}{2} = a \frac{\lambda}{2}\left(2e^{2x/\lambda}-1-e^{2/\lambda}\right)
\\
-12\left(x-\frac{1}{2}\right)\int_0^1 \left(y-\frac{1}{2}\right)\left(a\frac{\lambda}{2}\left(2e^{2y/\lambda}-1-e^{2/\lambda}\right)\right)dy,
\end{multline*}
so canceling $\lambda$  and subtracting $ae^{2x/\lambda}$ on both sides,
\begin{equation*}
- \frac{\beta}{2} = a\left(-1-e^{2/\lambda}\right)-12a\left(x-\frac{1}{2}\right)\int_0^1 \left(y-\frac{1}{2}\right)\left(e^{2y/\lambda}-\frac{1}{2}-\frac{1}{2}e^{2/\lambda}\right)dy.
\end{equation*}
Since $-1-e^{2/\lambda}$ is constant, 
\[
\int_0^1 \left(y-\frac{1}{2}\right)\left(-\frac{1}{2}-\frac{1}{2}e^{2/\lambda}\right)dy = 0.
\]
Furthermore, since $e^{2y/\lambda}$ is either increasing or decreasing (but never constant),
we may apply Lemma~\ref{lem:ineq} and obtain
\[
\int_0^1 \left(y-\frac{1}{2}\right)e^{2y/\lambda}dy = K \neq 0,
\]
where $K=\frac{\lambda}{2}\left(\frac{1}{2}-\frac{\lambda}{2}\right)e^{2/\lambda}+\frac{\lambda}{2}\left(\frac{1}{2}+\frac{\lambda}{2}\right)$ is a constant.  
Thus, 
\begin{equation*}
- \frac{\beta}{2} = a\left(-1-2e^{2/\lambda}\right)-12aK\left(x-\tfrac{1}{2}\right).
\end{equation*}
The left hand side is a constant but the right hand side varies linearly with $x$.
Therefore, it must be that $a=0$ and $\beta=0$. 
In the case $\mca(f)<\frac{1}{2}$, an almost identical computation proves that the only solution to $Af=\lambda f$ is $f=0$.  
\end{proof}

Although we will not make much use of the following result, it is interesting due to the connection it shows between the adaptive dynamics on the function valued game and the discrete game. 
In particular, these games share a certain symmetry.
If $A$ is given by~(\ref{eq:def-A}), then its kernel is anti-symmetric about the point $(x,y)=(\frac{1}{2},\frac{1}{2})$,
\[
k(1-x,1-y)= -s(x,y)-12H(w(f))(x-\tfrac{1}{2})(y^2-y)=-k(x,y).
\]
The skew symmetry of $A$ will be carried over to the discrete game in Section~\ref{sec:computations}. 
In the discrete game dynamics, $A$ is a $n\times n$ matrix which is \textit{skew centro-symmetric}, which means that $A$ is skew symmetric with respect to the intersection of its primary and its secondary diagonal. 
This means that we may say that $A$ -- regardless of whether it applies to the vector-valued strategies or the function-valued ones -- is skew centro-symmetric.


The $L^2$ adjoint $A^*$ is defined by $\langle Au,v\rangle=\langle u,A^*v\rangle$. 
A straightforward computation (integration by parts) gives the following lemma.

\begin{lemma}
Consider function-valued strategies with mean value $\frac{1}{2}.$
Let $A:L^2[0,1]\to L^2[0,1]$ be given by~(\ref{eq:def-A}).
The $L^2$ adjoint of $A$ is given by
\begin{equation*}
A^*v(x) = -\left(\int_0^x v(y)\,dy-\int_x^1 v(y)\,dy\right) + 12\left(x^2-x\right)\int_0^1\left(y-\frac{1}{2}\right) v(y)\,dy.
\end{equation*}
Let $k$ be the kernel of $A$ and $k^*$ the kernel of $A^*$.
If $T$ is the transformation of coordinates that mirrors $(x,y)$ in the point $(0.5,0.5)$, then $Tk=-k^*$.
\end{lemma}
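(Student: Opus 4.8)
The plan is to treat the two assertions separately: first derive the closed form of $A^*$ from the defining relation $\langle Au,v\rangle=\langle u,A^*v\rangle$, and then read off the kernels of $A$ and $A^*$ and check the reflection identity by direct substitution. Throughout I work under the hypothesis $\mca(f)=\tfrac12$, i.e.\ $w(f)=0$, so that $H(w(f))=1$ and $A=(1-P)\nabla E=\nabla E-P\nabla E$ is genuinely linear.

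For the adjoint I would compute $\langle Af,v\rangle=\int_0^1 (Af)(x)\,v(x)\,dx$ with $A$ written as in \eqref{eq:def-A}, and move each summand onto $v$ by Fubini. For the piece $\nabla E$, the explicit kernel $s$ from \eqref{eq:E-kernel} satisfies $s(y,x)=-s(x,y)$, so interchanging the order of integration gives $\int_0^1 s(x,y)v(x)\,dx=-\bigl(\int_0^y v-\int_y^1 v\bigr)$; hence $\nabla E$ is anti-self-adjoint, which accounts for the first (sign-flipped) term of the claimed $A^*v$. For the constrained correction I would first use the integration by parts of \eqref{eq:ibp-grad} to rewrite $-P\nabla E(f)$ as the rank-one operator $12(x-\tfrac12)\int_0^1(y^2-y)f(y)\,dy$, whose kernel is $12(x-\tfrac12)(y^2-y)$. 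Transposing this kernel swaps the two polynomial factors between the variables, turning it into $12(y-\tfrac12)(x^2-x)$, i.e.\ the operator $12(x^2-x)\int_0^1(y-\tfrac12)v(y)\,dy$, which is exactly the second term of $A^*v$. Collecting the two contributions yields the stated formula.

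For the reflection identity I would record the two kernels: from \eqref{eq:A-kernel}, $k(x,y)=s(x,y)+12(x-\tfrac12)(y^2-y)$, and from the adjoint just computed, $k^*(x,y)=k(y,x)=-s(x,y)+12(x^2-x)(y-\tfrac12)$. The verification then rests on three elementary substitutions: $s(1-x,1-y)=-s(x,y)$, $(1-x)-\tfrac12=-(x-\tfrac12)$, and $(1-y)^2-(1-y)=y^2-y$. The step I expect to be the main obstacle is keeping straight the interplay between the \emph{transpose} that the adjoint introduces (the argument swap in $k^*(x,y)=k(y,x)$) and the \emph{point reflection} $(x,y)\mapsto(1-x,1-y)$: the point reflection by itself only reproduces the anti-symmetry $k(1-x,1-y)=-k(x,y)$ established just before the lemma, whereas reaching $-k^*$ additionally requires the argument swap. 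The clean route is therefore to note that the relevant coordinate change is reflection across the anti-diagonal $(x,y)\mapsto(1-y,1-x)$ — the composition of the point reflection with transposition — and to check directly, using the three substitutions above, that $k(1-y,1-x)=s(x,y)-12(y-\tfrac12)(x^2-x)=-k^*(x,y)$, which is the asserted relation $Tk=-k^*$.
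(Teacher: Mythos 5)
Your derivation of the adjoint formula is correct, and it is exactly the ``straightforward computation (integration by parts)'' that the paper invokes without writing out: anti-self-adjointness of $\nabla E$ via the anti-symmetry of the kernel $s$ in \eqref{eq:E-kernel}, plus rewriting the constraint term through \eqref{eq:ibp-grad} as the rank-one kernel $12(x-\tfrac{1}{2})(y^2-y)$ of \eqref{eq:A-kernel}, whose transpose yields the second term of $A^*v$. More importantly, the obstacle you flag in the kernel identity is genuine, and your resolution is the right one. With $T$ read literally as the point reflection $(x,y)\mapsto(1-x,1-y)$, one has $Tk=-k$ --- precisely the skew centro-symmetry the paper records immediately before the lemma --- so $Tk=-k^*$ would force $k=k^*$, i.e.\ self-adjointness of $A$, which already fails for the $\nabla E$ part. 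The identity as claimed holds only for the reflection across the anti-diagonal, $(x,y)\mapsto(1-y,1-x)$, which you verify directly; note that your corrected version also follows in one line from the paper's own centro-symmetry, since substituting $(x,y)\mapsto(y,x)$ in $k(1-x,1-y)=-k(x,y)$ gives $k(1-y,1-x)=-k(y,x)=-k^*(x,y)$, so the two statements are equivalent once the transposition inherent in taking adjoints is accounted for. One hypothesis check you handle correctly but that deserves emphasis: the paper's $H$ is the indicator of $x\geq 0$, so $\mca(f)=\tfrac{1}{2}$ (i.e.\ $w(f)=0$) indeed gives $H(w(f))=1$ and a genuinely linear $A$, which is what makes the $L^2$ adjoint well defined in the first place.
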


\subsection{Existence of dynamical solutions} \label{sec:existence} 

Here we investigate solutions to the initial value problem for the constrained adaptive dynamics \eqref{eq:main}, with constrained selection gradient \eqref{eq:def-A}, and with initial data contained in $L^p[0,1]$ for $1 \leq p \leq \infty$.  
For such initial data $f_0$,  $\alpha$ is the integral curve of $A$ starting at $f_0$ if 
\begin{equation}
\frac{d}{dt} \alpha(t)=A\big(\alpha(t)\big)\quad\text{ with }\quad\alpha(0)=f_0.
\label{eq:problem}
\end{equation}
We will show that there is a family of integral curves that depends continuously on the initial strategy.  An important question about the problem~(\ref{eq:problem}) is whether there exist stationary solutions, that is, functions $f$ such that $\frac{d}{dt}f=Af=0.$ 
We answer this question in the following proposition.
It is interesting to note that in the context of the function-valued team game, the stationary solutions are precisely the equilibrium strategies of the game.

\begin{proposition}
\label{prop:stationary}
The only solutions to the equation for stationary solutions, $\frac{d}{dt}f=Af=0$, with initial data in $L^p[0,1]$ for some $ 1 \leq p \leq \infty$ are constant functions.
Specifying to functions satisfying \eqref{eq:bm_cont_setup}, the only stationary solutions to \eqref{eq:problem} are precisely the equilibrium strategies of the function-valued team game given in Theorem \ref{th:eqstr_function}.
\end{proposition}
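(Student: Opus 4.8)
The plan is to observe that this proposition is largely a corollary of the spectral computation already completed in Theorem~\ref{teo:eigen-f}. A stationary solution of the Cauchy problem~\eqref{eq:problem} is by definition a time-independent $f$ satisfying $\frac{d}{dt}f = Af = 0$, that is, an element of the kernel of $A$; equivalently, it is an eigenfunction of $A$ for the eigenvalue $\lambda = 0$. Theorem~\ref{teo:eigen-f} states that on $L^p[0,1]$ the only eigenfunctions of $A$ are the constant functions, all belonging to $\lambda = 0$. Hence the first assertion, that the stationary solutions in $L^p[0,1]$ are exactly the constant functions, follows with no additional work.

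For the second assertion I would restrict to functions obeying the strategy constraints~\eqref{eq:bm_cont_setup} and decide which constants qualify. Writing $f \equiv c$, non-negativity forces $c \geq 0$, and the requirement $\int_0^1 f(x)\,dx > 0$ upgrades this to $c > 0$. The remaining verification is the MCA constraint, where the key computation is
\[
\mca(c) = \frac{\int_0^1 x\,c\,dx}{\int_0^1 c\,dx} = \frac{c/2}{c} = \frac{1}{2},
\]
so every positive constant sits exactly on the boundary $w(f) = 0$ and thus meets $\mca(f) \leq \frac{1}{2}$ with equality. This is also consistent with the definition of $A$ in~\eqref{eq:def-A}: since $w(c) = \int_0^1 (x - \tfrac{1}{2})\,c\,dx = 0$, one has $H(w(c)) = 1$, placing constants in the first case treated in the proof of Theorem~\ref{teo:eigen-f}. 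Consequently, among all stationary solutions, precisely the positive constant functions are admissible strategies.

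Finally, I would invoke Theorem~\ref{th:eqstr_function}, which identifies the equilibrium points of the continuous and bounded measurable games of teams as exactly the positive constant functions. Combining this with the preceding paragraph, the stationary solutions lying in the strategy space coincide with the equilibrium strategies of the game, which completes the argument.

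I do not expect a genuine obstacle here, since the difficult part, ruling out non-constant eigenfunctions and nonzero eigenvalues, was already carried out in Theorem~\ref{teo:eigen-f}. The only point that demands care is the boundary behaviour: one must note that constants lie on the constraint surface $w(f) = 0$ rather than strictly inside the region $\mca(f) < \frac{1}{2}$, so that the relevant branch of the piecewise definition of $A$ is the one with $H(w(f)) = 1$, and one must confirm that the MCA constraint is satisfied with equality rather than strict inequality.
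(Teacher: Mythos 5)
Your proposal is correct and follows essentially the same route as the paper: the paper's proof likewise reduces the first assertion to Theorem~\ref{teo:eigen-f} (the kernel of $A$ consists exactly of the constants, eigenvalue $\lambda=0$) and leaves the identification with Theorem~\ref{th:eqstr_function} implicit. Your extra verification that every constant $c>0$ satisfies $\mca(c)=\tfrac{1}{2}$, hence lies on the boundary $w(f)=0$ where $H(w(f))=1$, is a detail the paper omits but is entirely consistent with its argument.
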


\begin{proof}
By Theorem~\ref{teo:eigen-f}, the equation $Af=\lambda f$ has only one solution $f$ such that $f$ is not the zero function, namely $f$ being a constant on $[0,1]$.
The corresponding  eigenvalue is $\lambda=0$ and therefore, the equation for stationary solutions is satisfied. 
\end{proof}

\begin{remark}
\label{rem:constant}
If the strategies are normalized, they can be interpreted as probability density functions.
It turns out that strategies $f$ that satisfy $\mca(f)=\frac{1}{2}$ stay normalized during the adaptive dynamics evolution.
For any function $f$, 
\begin{equation*}
\int_0^1 P(f)(x)\,dx=0,
\end{equation*}
since $P(f)(x)$, which is defined in~(\ref{eq:pi}), is proportional to $x-\frac{1}{2}$.
By integration by parts,
\begin{equation}
\int_0^1 \left(\int_0^x f(y)\,dy - \int_x^1 f(y)\,dy\right)\,dx= 2\left(\frac{1}{2}-\mca(f)\right)\int_0^1 f(x)\,dx.
\label{eq:int-grad}
\end{equation}
That is,
\begin{equation*}
\text{MCA}(f)=\frac{1}{2}\implies \int_0^1 \nabla E(f)(x)\,dx=0.
\end{equation*}
Therefore, if the initial data $f_0$ is such that $\mca(f_0)=\frac{1}{2}$, and $f$ evolves according to 
\[ \frac{\partial f}{\partial t} = (1-P)\nabla E(f) = A f,\] then the integral $\int_0^1 f(x)\,dx$ is constant. 
In other words, the population size is preserved. 
When $f$ represents a probability distribution function it stays normalized during the evolution. 
\end{remark}

Notice that equation~(\ref{eq:problem}) can be cast into the integral form
\begin{equation}
\alpha(t) = f_0 + \int_0^t A\big(\alpha(s)\big)\, ds.
\label{eq:integral-main}
\end{equation}
This is no longer an integro-differential equation but rather an integral equation of mixed Volterra and Fredholm type.
The existence of solutions to the initial value problem is transferred into a fixedpoint problem of~(\ref{eq:integral-main}).
In this situation it is suitable to use the Banach fixed point theorem, or even better, the Picard-Lindelöf theorem.
We will apply these in Theorem~\ref{Lp-theorem}.

If $\dot \alpha$ is the derivative of $\alpha$ with respect to time, equation~(\ref{eq:problem}) can be written as $\dot \alpha=A(\alpha)$.
Here, we mean that $A(\alpha)$ is the image of $A$ applied to $\alpha(t)$, which is a function in $L^p[0,1]$.
In order to clarify this, we use the notation $\alpha_t=\alpha(t)$ for the integral curve at time $t$.

\begin{lemma}
Consider the problem~(\ref{eq:problem}) with $A$ defined by~(\ref{eq:def-A}).
Assume that it admits a local solution, $\alpha_t$, for $t$ in some interval $J$. If the solution satisfies $\int_0^1 \alpha_t(x)\,dx\neq 0$ at some $t\in J$, and $\mca(\alpha_t)=\frac{1}{2}$ 
then the time derivative of $\mca(\alpha)$ at time $t$ vanishes. If $0<\mca(\alpha_t)<\frac{1}{2}$, and the solution satisfies $\alpha_t(x)>0$ at some $t\in J$  then $\mca(\alpha_t)$ is an increasing function of $t$ with growth rate $\frac{d}{dt}\mca(\alpha_t)>2(1/2-\mca(\alpha_t))^2.$
\label{lem:increase-mca}
\end{lemma}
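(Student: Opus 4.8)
The plan is to reduce everything to the behaviour of two scalar quantities along the integral curve: the total mass $n(t) = \int_0^1 \alpha_t(x)\,dx$ and the constraint functional $w(t) = w(\alpha_t) = \int_0^1 (x-\tfrac{1}{2})\alpha_t(x)\,dx$ from \eqref{eq:w}. The starting observation is the algebraic identity $\mca(\alpha_t) - \tfrac{1}{2} = w(t)/n(t)$, valid whenever $n(t)\neq 0$, so that by the quotient rule $\frac{d}{dt}\mca(\alpha_t) = \big(\dot w(t)\,n(t) - w(t)\,\dot n(t)\big)/n(t)^2$. Because $\alpha_t$ solves $\dot\alpha_t = A(\alpha_t)$ as a curve in $L^p[0,1]$, and because $x$ and the constant function $1$ are bounded and hence pair with any $L^p$ function, the linear functionals $n$ and $w$ are differentiable in $t$, with $\dot n(t) = \int_0^1 A(\alpha_t)(x)\,dx$ and $\dot w(t) = \int_0^1 (x-\tfrac{1}{2})A(\alpha_t)(x)\,dx$. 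This interchange of differentiation and integration is the only genuinely analytic point, and it follows directly from the definition of a local solution to \eqref{eq:problem}.

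First I would treat the boundary case $\mca(\alpha_t) = \tfrac{1}{2}$, i.e. $w(t) = 0$, where the hypothesis $\int_0^1\alpha_t\neq 0$ supplies $n(t)\neq 0$. Since the Heaviside convention gives $H(0)=1$, the constraint term in \eqref{eq:def-A} is active, so $A(\alpha_t) = (1-P)\nabla E(\alpha_t)$ with $P$ the orthogonal projection of \eqref{eq:pi} onto the span of $x-\tfrac{1}{2}$. By construction $(1-P)\nabla E(\alpha_t)$ is $L^2$-orthogonal to $x-\tfrac{1}{2}$, hence $\dot w(t) = \langle x-\tfrac{1}{2},\, A(\alpha_t)\rangle = 0$. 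With both $w(t)=0$ and $\dot w(t)=0$, the two terms of the quotient vanish and $\frac{d}{dt}\mca(\alpha_t)=0$, which is the first assertion.

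Next I would treat $0 < \mca(\alpha_t) < \tfrac{1}{2}$, equivalent to $w(t) < 0$ once $n(t)>0$ is known, the latter following from the hypothesis $\alpha_t(x) > 0$. Because $w(t) < 0$ we have $H(w(t)) = 0$, so the projection switches off and $A(\alpha_t) = \nabla E(\alpha_t)$ reduces to the unconstrained gradient. The two earlier identities then do the work: \eqref{eq:int-grad} gives $\dot n(t) = \int_0^1 \nabla E(\alpha_t) = 2\big(\tfrac{1}{2} - \mca(\alpha_t)\big)n(t)$, while the integration by parts \eqref{eq:ibp-grad} gives $\dot w(t) = \int_0^1 (x-\tfrac{1}{2})\nabla E(\alpha_t) = \int_0^1 x(1-x)\alpha_t(x)\,dx$, which is strictly positive since $x(1-x)>0$ on $(0,1)$ and $\alpha_t > 0$. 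Substituting $w(t) = n(t)\big(\mca(\alpha_t)-\tfrac{1}{2}\big)$ into the quotient, the term $-w\dot n/n^2$ collapses exactly to $2\big(\tfrac{1}{2}-\mca(\alpha_t)\big)^2$, whereas $\dot w\, n/n^2 = \dot w/n > 0$; adding the two yields the stated strict lower bound $\frac{d}{dt}\mca(\alpha_t) > 2\big(\tfrac{1}{2}-\mca(\alpha_t)\big)^2$, and in particular $\mca(\alpha_t)$ is increasing.

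The whole argument is essentially bookkeeping once the two regimes are separated by the sign of $w$; the points to handle with care are the value of the Heaviside factor precisely at $w=0$ (where $H=1$ keeps the projection active and forces $\dot w = 0$) and the legitimacy of differentiating the scalar functionals $n$ and $w$ along the $L^p$-valued solution curve. No step requires anything beyond the identities \eqref{eq:ibp-grad} and \eqref{eq:int-grad}, the orthogonality built into $P$, and the positivity of $x(1-x)$, so I expect the main (mild) obstacle to be the clean justification of the differentiation-under-the-integral rather than any new estimate.
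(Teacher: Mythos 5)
Your proof is correct and takes essentially the same route as the paper's: a quotient-rule computation for the MCA, split into the cases $w(\alpha_t)<0$ and $w(\alpha_t)=0$, powered by the identities \eqref{eq:int-grad} and \eqref{eq:ibp-grad} and the strict positivity of $\int_0^1 x(1-x)\alpha_t(x)\,dx$. The only difference is cosmetic: you obtain $\dot w=0$ in the constrained case directly from the $L^2$-orthogonality of $(1-P)\nabla E(\alpha_t)$ to $x-\tfrac{1}{2}$, whereas the paper performs the same cancellation explicitly via $12\int_0^1\bigl(x-\tfrac{1}{2}\bigr)^2dx=1$ in its computation of $\frac{d}{dt}\int_0^1 x\,\alpha_t(x)\,dx$.
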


\begin{proof}
Let $\alpha$ be a solution to~(\ref{eq:problem}) with $A$ defined by~(\ref{eq:def-A})
and denote by $\alpha_t$ the solution after time $t$.
First, note that since $\alpha_t$ is a solution it is measurable, and $\alpha$ is a piecewise $C^1$-curve on $L^p[0,1]$, where $p$ is the same as for the initial data.
Consider the case $w(\alpha_t)<0$.
Then $A=\nabla E$, which satisfies a Lipschitz condition on the $L^p[0,1]$ space.
Thus \cite{lang1993}, we may compute
\begin{equation*}
\frac{d}{dt} \int_0^1 \alpha_t(x)dx = \int_0^1 (A\alpha_t)(x)\,dx.
\end{equation*}
Using $A=\nabla E$,
\begin{equation*}
\int_0^1 (A\alpha_t)(x)\,dx = \int_0^1 \left(\int_0^x\alpha_t(y)dy - \int_x^1\alpha_t(y)dy\right)\,dx.
\end{equation*}
The right hand side of this is simplified by~(\ref{eq:int-grad}) such that
\begin{equation*}
\frac{d}{dt} \int_0^1 \alpha_t(x)dx = 2\left(\frac{1}{2}-\mca(\alpha_t)\right) \int_0^1\alpha_t(x)\,dx,
\end{equation*}
so the denominator in the definition of the MCA, see \eqref{eq:bm_cont_setup}, does not approach zero.
Moreover,
\begin{multline*}
\frac{\partial}{\partial t} \int_0^1 x\alpha_t(x)\,dx = \int_0^1 x\left(\int_0^x\alpha_t(y)dy - \int_x^1\alpha_t(y)dy\right) dx\\
= \int_0^1 \left(x-\frac{1}{2}\right)\left(\int_0^x\alpha_t(y)dy - \int_x^1\alpha_t(y)dy\right) dx
+\frac{1}{2}\int_0^1 \left(\int_0^x\alpha_t(y)dy- \int_x^1\alpha_t(y)dy\right) dx.
\end{multline*}
The first term in this sum is simplified using integration by parts:
\begin{equation*}
 \int_0^1 \left(x-\frac{1}{2}\right)\left(\int_0^x\alpha_t(y)dy - \int_x^1\alpha_t(y)dy\right) dx
 = -\int_0^1 (x^2-x)\alpha_t(x)\, dx.
\end{equation*}
If $\alpha_t$ is a strategy of the team game, then according to the assumptions \eqref{eq:bm_cont_setup} 
\begin{equation*}
 -\int_0^1 (x^2-x)\alpha_t(x)\, dx = \int_0^1 (x-x^2)\alpha_t(x)\, dx \geq 0.
\end{equation*}
Collecting these results we obtain
\begin{multline*}
\frac{d}{dt}\mca(\alpha_t) = \left(\frac{d}{dt}\int_0^1 x\alpha_t(x)dx\right)\frac{1}{\int_0^1 \alpha_t(x)dx} \\
- \left(\int_0^1 x\alpha_t(x)dx\right)\frac{1}{\left(\int_0^1 \alpha_t(x)dx\right)^2}\left(\frac{d}{dt}\int_0^1 \alpha_t(x)dx\right)\\
=\left(\int_0^1 (x-x^2)\alpha_t(x)dx +\left(\frac{1}{2}-\mca(\alpha_t)\right)\int_0^1\alpha_t(x)dx \right)\frac{1}{\int_0^1 \alpha_t(x)dx} \\
- \left(\int_0^1 x\alpha_t(x)dx\right)\frac{1}{\left(\int_0^1 \alpha_t(x)dx\right)^2}2\left(\frac{1}{2}-\mca(\alpha_t)\right)
\end{multline*}
\begin{multline*}
=\frac{1}{\int_0^1 \alpha_t(x)\,dx}\int_0^1 (x-x^2)\alpha_t(x)dx +\left(\frac{1}{2}-\mca(\alpha_t)\right)
-2\mca(\alpha_t)\left(\frac{1}{2}-\mca(\alpha_t)\right)\\
=2\left(\frac{1}{2}-\mca(\alpha_t)\right)^2+\frac{1}{\int_0^1 \alpha_t(x)\,dx}\int_0^1 (x-x^2)\alpha_t(x)dx.
\end{multline*}
This shows that the MCA of a strategy $f$ such that $0<\mca(f)<\frac{1}{2}$ has a positive derivative with respect to time in the adaptive dynamics system.

Next, consider the case $w(\alpha_t)\geq 0$.
Define $A_c$ by
\begin{equation*}
A_cf(x)=\int_0^x f(y)\,dy- \int_x^1 f(y)\,dy -12(x-\tfrac{1}{2})\int_0^1 (y-\tfrac{1}{2})\left(\int_0^y f - \int_y^1 f\right)dy.
\end{equation*}
Since $w(\alpha_t)\geq 0$, the operators $A$ and $A_c$ coincide.
Notice that $A_c$ is linear, and it is bounded on $L^p[0,1]$ by Lemma~\ref{lem:bound-p}.
Again we may differentiate under the integral with respect to $t$.
Since $\int_0^1(x-\frac{1}{2})dx = 0$, 
\begin{equation}
\frac{d}{dt} \int_0^1 \alpha_t(x)dx = \int_0^1 \left(\int_0^x\alpha_t(y)dy - \int_x^1\alpha_t(y)dy\right)\,dx.
\label{eq:dda1}
\end{equation}
As in the previous case, the denominator of the MCA does not approach zero whenever $\alpha_t$ is a strategy.
Moreover, by the definition of $A_c$, 
\begin{multline*}
\frac{d}{dt} \int_0^1 x\alpha_t(x)\,dx = \int_0^1 x\left(\int_0^x\alpha_t(y)dy - \int_x^1\alpha_t(y)dy\right.\\
\left. -12 (x-\tfrac{1}{2})\int_0^1(y-\tfrac{1}{2})\left(\int_0^y \alpha_t(z)dz - \int_y^1 \alpha_t(z)dz\right)dy\right) dx.
\end{multline*}
Let $q_t=\int_0^1(y-\tfrac{1}{2})\left(\int_0^y \alpha_t(z)dz - \int_y^1 \alpha_t(z)dz\right)dy$ and compute
\begin{multline*}
\int_0^1 x\left(\int_0^x\alpha_t(y)dy - \int_x^1\alpha_t(y)dy-12q_t(x-\tfrac{1}{2})\right) dx\\
= \int_0^1 \left(x-\frac{1}{2}\right)\left(\int_0^x\alpha_t(y)dy - \int_x^1\alpha_t(y)dy-12q_t(x-\tfrac{1}{2})\right) dx 
\\+\frac{1}{2} \int_0^1 \left(\int_0^x\alpha_t(y)dy - \int_x^1\alpha_t(y)dy-12q_t(x-\tfrac{1}{2})\right) dx.
\end{multline*}
Thus,
\begin{multline}
\frac{d}{dt} \int_0^1 x\alpha_t(x)\,dx  = \int_0^1 \left(x-\frac{1}{2}\right)\left(\int_0^x\alpha_t(y)dy - \int_x^1\alpha_t(y)dy\right) dx\\
-12\underbrace{\int_0^1(x-\tfrac{1}{2})^2\,dx}_{=1/12}\int_0^1 \left(x-\frac{1}{2}\right)\left(\int_0^x\alpha_t(y)dy - \int_x^1\alpha_t(y)dy\right)dx 
\\+\frac{1}{2} \int_0^1 \left(\int_0^x\alpha_t(y)dy - \int_x^1\alpha_t(y)dy-12q_t(x-\tfrac{1}{2})\right) dx\\
= \frac{1}{2} \int_0^1 \left(\int_0^x\alpha_t(y)dy - \int_x^1\alpha_t(y)dy\right) dx.
\label{eq:dda2}
\end{multline}
The results in~(\ref{eq:dda1}) and~(\ref{eq:dda2}) together implies that
\[
 \frac{d}{dt} \int_0^1 \left(x-\tfrac{1}{2}\right)\alpha_t(x)\, dx = 0.
\]
In other words,
\[
 w(\alpha_t)\geq 0 \implies \frac{d}{dt} w(\alpha_t) = 0.
\]
The denominator of the MCA is not approaching zero, so this implies that the derivative of $\mca(\alpha_t)$ with respect to $t$ is zero.

Computing $\frac{d}{dt}\mca(\alpha_t)$ in the case $w(\alpha_t)\geq 0$ is just like in the case $w(\alpha_t)<0$ but with an additional term. 
The result for both cases together is
\begin{equation}
\label{eq:mca-growth}
\frac{d}{dt}\mca(\alpha_t) = 2\left(\frac{1}{2}-\mca(\alpha_t)\right)^2+\frac{1-H(w(\alpha_t))}{\int_0^1\alpha_t(x)dx}\int_0^1x(1-x)\alpha_t(x)dx.
\end{equation}
In particular,
\[
 \mca(\alpha_t)=1/2\ \implies \frac{d}{dt}\mca(\alpha_t) =0.
\]
If $\alpha_t$ is a strategy fulfilling \eqref{eq:bm_cont_setup}, then the second term on the right side of \eqref{eq:mca-growth} is non-negative, so if also $0<\mca(\alpha_t)<\frac{1}{2}$ then $\frac{d}{dt}\mca(\alpha_t)>0$.
Removing the second part of the RHS in~(\ref{eq:mca-growth}), a lower bound on $\mca(\alpha_t)$ is obtained. 
\end{proof}

\begin{remark}
We remark that the lower bound of the above lemma approaches $\frac{1}{2}$ as $t\to\infty.$
We have $\frac{\partial}{\partial t}\mca(\alpha_t)>2(1/2-\mca(\alpha_t))^2$.
Consider a function $g(t)$ that grows exactly according to $g'=2(1/2-g)^2$.
Solving $g'/(g-1/2)^2 = 2$ yields
\[
g(t) = \frac{1}{2} -\frac{1}{c_0+2t}.
\]
Here, $c_0$ is a constant of integration.
We make sure that $g(0)=\mca(f_0)$ by solving $\mca(f_0)=\frac{1}{2}-\frac{1}{c_0}$, where $f_0$ is the initial data. 
Since $\mca(f_0)<\frac{1}{2}$ we obtain $c_0>0$.
For any $t\geq 0$, the MCA is increasing, and 
\[
\mca(\alpha_t)>\frac{1}{2} -\frac{1}{c_0+2t}\quad\text{ for } t>0.
\]
The lower bound in this equation approaches $\frac{1}{2}$ as $t\to\infty$.
\end{remark}

This lemma will be used in the following theorem, which is one our central results.

\begin{theorem} 
\label{Lp-theorem}
Fix $p$ with $1 \leq p \leq \infty$.
Let $A$ be as in~(\ref{eq:def-A}) and $f_0\in L^p[0,1]$.
Then the initial value problem~(\ref{eq:problem}) admits a solution $\alpha:[0,\infty)\to L^p[0,1]$.
If in addition $\int_0^1 f_0(x)\,dx\neq 0$ and $\mca(f_0)=\frac{1}{2}$ then $\mca(\alpha(t))=\frac{1}{2}$ for all $t>0$.
If furthermore $f_0\in L^p[0,1]$ with $p\geq 2$ and $\mca(f_0)=\frac{1}{2}$ then the $L^2$ norm of the solution is constant for all $t>0.$
If $f_0\in C^k([0,1])$ then the solution is also $C^k$ at every time.
\end{theorem}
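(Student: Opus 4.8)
The plan is to treat the Cauchy problem (\ref{eq:problem}) by exploiting the piecewise-linear structure of $A$: on the open half-space $\{w(f)<0\}$ one has $H(w(f))=0$ and $A=\nabla E$, whereas on the closed half-space $\{w(f)\geq 0\}$ one has $H(w(f))=1$ and $A=A_c:=(1-P)\nabla E$. Both $\nabla E$ and $A_c$ are bounded linear operators on $L^p[0,1]$ by Lemma \ref{lem:bound-p}, hence Lipschitz, so on the interior of each region the Picard--Lindel\"of theorem applies to the integral form (\ref{eq:integral-main}) and produces a unique local solution; since the generators are bounded, this solution is the operator exponential $e^{t\nabla E}f_0$, respectively $e^{tA_c}f_0$, which converges in operator norm on the Banach space $L^p$ for every $1\leq p\leq\infty$ and exists for all $t$ (a bounded generator cannot cause finite-time blow-up). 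The one genuine obstruction to global existence is the discontinuity of $A$ across the switching hyperplane $\{w=0\}$, where $A$ is not Lipschitz; resolving this cleanly is the crux of the argument.

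To cross the switching surface I would first record the conservation law $w(A_c f)=\langle\nabla w,(1-P)\nabla E f\rangle=0$, which holds because $(1-P)$ annihilates the component along $\nabla w=x-\tfrac12$; this is exactly the identity $\frac{d}{dt}w(\alpha_t)=0$ for $w(\alpha_t)\geq 0$ established in Lemma \ref{lem:increase-mca}. Consequently, once a trajectory reaches $\{w=0\}$ it stays there under $A_c$. For $w(f_0)\geq 0$ the solution is simply $\alpha(t)=e^{tA_c}f_0$, with $w$ frozen at $w(f_0)\geq 0$ so that $H(w)\equiv 1$ is consistent. For $w(f_0)<0$ I would follow $\alpha(t)=e^{t\nabla E}f_0$ and set $t^*=\inf\{t>0:w(e^{t\nabla E}f_0)=0\}$; by continuity of $t\mapsto w(e^{t\nabla E}f_0)$ one has $w<0$ on $[0,t^*)$, so $A=\nabla E$ is consistent there, and at $t^*$ I would concatenate with $e^{(t-t^*)A_c}\alpha(t^*)$, keeping $w\equiv 0$ thereafter. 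The resulting curve is continuous, piecewise $C^1$, switches at most once, and satisfies (\ref{eq:integral-main}), hence is a global solution on $[0,\infty)$.

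For the MCA statement, $\mca(f_0)=\tfrac12$ forces $w(f_0)=0$, so $\alpha(t)=e^{tA_c}f_0$ and $w(\alpha(t))\equiv 0$ by the conservation law above. Equation (\ref{eq:int-grad}) gives $\int_0^1\nabla E(f)=2(\tfrac12-\mca(f))\int_0^1 f=-2w(f)$, and $\int_0^1 P(g)=0$ for every $g$, so $\frac{d}{dt}\int_0^1\alpha_t=\int_0^1 A_c\alpha_t=-2w(\alpha_t)=0$; thus the population $\int_0^1\alpha_t$ is conserved (Remark \ref{rem:constant}) and stays equal to $\int_0^1 f_0\neq 0$. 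Since $\mca(\alpha_t)=\tfrac12+w(\alpha_t)/\int_0^1\alpha_t$, we conclude $\mca(\alpha_t)\equiv\tfrac12$. For the $L^2$ claim, the embedding $L^p\hookrightarrow L^2$ for $p\geq 2$ places the solution in $L^2$, and I would compute $\frac{d}{dt}\|\alpha_t\|_2^2=2\langle\alpha_t,A_c\alpha_t\rangle$. Here $\langle f,\nabla E f\rangle=\iint s(x,y)f(x)f(y)\,dx\,dy=0$ because the kernel $s$ of (\ref{eq:E-kernel}) is antisymmetric, while $\langle f,P\nabla E f\rangle$ is a multiple of $\langle f,x-\tfrac12\rangle=w(f)$, which vanishes along the trajectory; hence $\langle\alpha_t,A_c\alpha_t\rangle=0$ and $\|\alpha_t\|_2$ is constant.

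Finally, for the regularity claim I would work in the Banach space $C^k[0,1]$. From $\frac{d}{dx}\nabla E(f)=2f$ (equation (\ref{eq:ddxE})) and the analogous derivative formula for $A_c$ in Lemma \ref{lem:bound-p}, both operators are bounded on $C^k[0,1]$ (indeed they map $C^k$ into $C^{k+1}$), so $e^{t\nabla E}$ and $e^{tA_c}$ are bounded operators on $C^k$ whose partial sums agree with those computed in $L^p$; therefore the $C^k$- and $L^p$-solutions coincide and $f_0\in C^k$ yields $\alpha(t)\in C^k$ for every $t$, the switch at $t^*$ preserving $C^k$ regularity because $A_c$ does. Throughout, the main obstacle is the single point flagged above: showing that the discontinuous switching of $A$ at $\{w=0\}$ still yields a genuine global solution, which the conservation of $w$ under $A_c$ resolves by forcing the trajectory to meet the switching surface at most once.
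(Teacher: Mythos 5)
Your proposal is correct and follows the paper's proof in all essentials: the same decomposition of the dynamics into the regimes $w<0$ (where $A=\nabla E$) and $w\geq 0$ (where $A=A_c=(1-P)\nabla E$), global existence in each regime from linearity and the boundedness in Lemma~\ref{lem:bound-p} via Picard--Lindel\"of (the paper quotes Theorem~7.3 of Brezis), the conservation $\frac{d}{dt}w(\alpha_t)=0$ under $A_c$ to justify concatenating at the first hitting time of $\{w=0\}$, the identity $\int_0^1 P(g)\,dx=0$ together with~(\ref{eq:int-grad}) for constancy of the MCA, and the cancellation $E[\alpha_t,\alpha_t]=0$ plus $w(\alpha_t)=0$ for constancy of the $L^2$ norm. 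The one place you genuinely deviate is the $C^k$ regularity claim: the paper bootstraps through the integral form~(\ref{eq:integral-main}) using $\frac{\partial}{\partial x}A\alpha_t=2\alpha_t-\text{const}$, so that smoothness of the initial data propagates (and in fact $\alpha(t)-f_0$ gains a derivative), whereas you observe that $\nabla E$ and $A_c$ are bounded operators on the Banach space $C^k[0,1]$, so the operator exponentials converge there and coincide with the $L^p$ solutions. Your route is slightly slicker and gives coincidence of the $C^k$ and $L^p$ flows for free; the paper's bootstrap is more hands-on and makes the smoothing effect of the selection gradient explicit. Two small points you handled correctly but should keep explicit in a final write-up: the convention $H(0)=1$ is what makes the post-switch dynamics $A_c$ consistent on $\{w=0\}$, and at the single switching time $t^*$ the concatenated curve is only one-sidedly differentiable (left derivative $\nabla E\,\alpha(t^*)$, right derivative $A_c\alpha(t^*)$), which is exactly the ``piecewise $C^1$'' sense in which the paper's solution also solves~(\ref{eq:problem}).
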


\begin{proof}
The operator $A$ \eqref{eq:def-A} changes with $w$ \eqref{eq:w} which is used to express the MCA constraint.  We analyze the situations $w(f_0)<0$ and $w(f_0)\geq 0$ separately.
Starting with the latter, let us define $A_c:L^p[0,1]\to L^p[0,1]$ by
\begin{equation*}
A_cf(x)=\int_0^x f(y)\,dy- \int_x^1 f(y)\,dy -12(x-\tfrac{1}{2})\int_0^1 (y-\tfrac{1}{2})\left(\int_0^y f - \int_y^1 f\right)dy.
\end{equation*}
Notice that $A_c$ is linear on $L^p[0,1]$, and by applying Lemma~\ref{lem:bound-p} to $A_c$ we obtain a Lipschitz condition: $\|A_c(f)-A_c(g)\|\leq L\|f-g\|$ for $L=(1+12\|x-\frac{1}{2}\|_p\|x-\frac{1}{2}\|_q)$ and for all $f,g\in L^p[0,1]$.
Therefore, the Picard-Lindelöf theorem applies.
We quote the theorem as stated by Brezis \cite{brezis2011}:
\begin{quote}
\textbf{Theorem 7.3 of \cite{brezis2011}.} \em Let $E$ be a Banach space with norm $\|\cdot\|$ and $F:E\to E$ a Lipschitz mapping, i.e., there is a constant $L$ such that $\|Fu-Fv\|\leq L\|u-v\|$ for all $u,v\in E$.
Given $u_0\in E$, there is a unique $C^1$-curve \[u:[0,\infty)\to E\] satisfying the initial value problem \[ du/dt = F(u),\quad u(0)=u_0.\] \em 
\end{quote}
Here, the mapping $F$ corresponds to $A_c$, $L=(1+12\|x-\frac{1}{2}\|_p\|x-\frac{1}{2}\|_q)$ and $E=L^p[0,1]$.
Since the Lipschitz condition on $A_c$ is independent of $f_0$ and global on $L^p[0,1]$, the solution $\alpha$ to the initial value problem $\dot \alpha=A_c\alpha,\ \alpha(0)=f_0$ is defined on $t>0$ for any given initial data $f_0\in L^p[0,1]$.

Next, if $w(f_0)<0$ then $A=\nabla E$, so we would like to analyze the mapping $\nabla E :L^p[0,1]\to L^p[0,1]$ given by~(\ref{eq:selection-grad}).
Since $\nabla E$ is linear on $L^p[0,1]$ and bounded (by Lemma~\ref{lem:bound-p}) we obtain a Lipschitz condition: $\|\nabla E(f)-\nabla E(g)\|_p \leq \|f-g\|_p.$
Again, the Picard-Lindelöf theorem applies and we obtain existence and uniqueness of solutions, but this time the solution satisfies $\dot \alpha=\nabla E(\alpha),\ \alpha(0)=f_0$.

Following the same computations as in the proof of Lemma~\ref{lem:increase-mca},
\begin{equation*}
w(\alpha_t)\geq 0\implies \frac{\partial}{\partial t} w(\alpha_t)=
\frac{\partial}{\partial t} \int_0^1 (x-\tfrac{1}{2}) \alpha_t(x)dx = 0.
\end{equation*}
That is, if the solution is such that $w(\alpha_t)\geq 0$ at some $t$ then it continues to be such that $w(\alpha_t)\geq 0$.
If on the other hand $w(\alpha_t)<0$ then it might happen that $w(\alpha_{t_1})=0$ at some later time $t_1$.
Then we may solve $\dot \alpha=A\alpha$ on $t>t_1$ with $\alpha_{t_1}$ as initial condition to obtain a unique solution at later times.
In conclusion, the initial value problem~(\ref{eq:problem}) admits a solution $\alpha:[0,\infty)\to L^p[0,1]$ for any initial data $f_0\in L^p[0,1]$.

As in the beginning of the proof of Lemma~\ref{lem:increase-mca}, we can conclude that if $\int_0^1 f_0(x)\,dx\neq 0$ for any initial data $f_0\in L^p[0,1]$ then $\int_0^1 \alpha_t(x)\,dx\neq 0$ for all $t.$
Assuming that the initial data $f_0$ satisfies $\mca(f_0)=1/2$, Lemma~\ref{lem:increase-mca} implies that the MCA of the solution curve $\alpha(t)$ is constant for all $t>0.$
Thus also the initial value problem with $A$ instead of $A_c$, that is, $\dot \alpha=A\alpha,\ \alpha(0)=f_0$ with $\mca(f_0)=\frac{1}{2}$ admits a solution with constant MCA.

Next, we prove that if $f_0$ is $C^k$-smooth then the solution $\alpha(t)$ is also $C^k$-smooth with respect to $x$.  We first compute that the derivative of $A\alpha_t(x)$ with respect to $x$ is
\begin{equation*}
\frac{\partial}{\partial x} A\alpha_t(x) = 2\alpha_t(x) -12 H(w(\alpha_t))\int_0^1 \left(y-\frac{1}{2}\right)\left(\int_0^y \alpha_t(z)\,dz-\int_y^1 \alpha_t(z)\,dz\right)dy.
\end{equation*}

If $\alpha_t$ is $C^k$-smooth with respect to $x$, then $A\alpha_t$ is $C^{k+1}$-smooth but then $\alpha_t$ is $C^{k+1}$-smooth. Using the integral form of the dynamics equation \eqref{eq:integral-main}, it follows that the initial data $f_0$ determines the smoothness of $\alpha(t)$ with respect to $x$.

Let $f_0\in L^p[0,1]$ with $p\geq 2$.
To show that the solution's $L^2$ norm is constant, let $p=2$ and compute the time derivative of the $L^2$ norm 
\[
\frac{d}{dt}\|\alpha(t)\|^2 = \frac{d}{dt}\langle \alpha(t),\alpha(t)\rangle = 2\langle \dot{\alpha}(t),\alpha(t)\rangle=2\langle A\alpha(t),\alpha(t)\rangle.
\]
Let $q_0=\int_0^1 (y-\frac{1}{2})(\int_0^y \alpha_t(x)\,dx-\int_y^1 \alpha_t(x)\,dx)dy$.
Then
\begin{multline*}
\frac{d}{dt}\|\alpha(t)\|^2 = 2\int_0^1 \alpha_t(x) \left(\int_0^x \alpha_t(x)\,dx -\int_x^1 \alpha_t(x)\,dx - 12q_0 H(w(f))(x-\tfrac{1}{2})\right)\,dx\\
=2\underbrace{E[\alpha(t),\alpha(t)]}_{=0}-24H(w(f))q_0\int_0^1(x-\tfrac{1}{2})\alpha_t(x)\,dx
\end{multline*}
\begin{equation}
\label{eq:ddt-alpha}
\implies \frac{d}{dt}\|\alpha(t)\| = \frac{-12q_0 H(w(f))\int_0^1(x-\tfrac{1}{2})\alpha_t(x)\,dx}{\|\alpha(t)\|}.
\end{equation}
Here, we used that $E[f,f]=0$ for all $f\in L^1[0,1]\supset L^p[0,1]$.
Notice that $q_0\geq 0$ by~(\ref{eq:ibp-grad}).
Recall from the first part of the proof that if $\mca(f_0)=\frac{1}{2}$ then also $\mca(\alpha_t)=\frac{1}{2}.$
Since $\int_0^1(x-\tfrac{1}{2})\alpha_t(x)\,dx=0$ is equivalent to $\mca(\alpha_t)=\frac{1}{2}$, a consequence of~(\ref{eq:ddt-alpha}) is
\begin{equation*}
\frac{d}{dt}\|\alpha(t)\| = 0.
\end{equation*}
That is, the solution $\alpha(t)$ has constant $L^2$ norm.
\end{proof}

At this point, there is no guarantee that the solution to the problem~(\ref{eq:problem}) improves the strategy. 
Given a solution $\alpha(t)$ with $\alpha(0)=f_0$, we say that $\alpha_t$ defeats $f_0$ if $E[\alpha(t),f_0]> 0$ for some $t>0$.
If $f_0$ is a constant function, Rowlett \textit{et al.}~\cite{rowlett2022diversity} showed that it does not exist strategies with positive expectation in competition with $f_0$, but in case $f_0$ is non-constant we would like to know if the evolved strategy improves in the sense that it would defeat $f_0$.
That is, we would like to analyze whether $E[\alpha(t),f_0]$ is positive or negative for some $t>0$.

\begin{lemma}
Let $f_0\in\mathcal{L}^\infty_+[0,1],$ and assume that $f_0$ is not a constant function.
Then, for small $t$, the solution to the adaptive dynamics equation~(\ref{eq:problem}) defeats $f_0$, or in other words $E[\alpha(t),f_0]>0$.
\label{lem:nonconstant}
\end{lemma}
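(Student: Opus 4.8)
The plan is to track the scalar quantity $\phi(t):=E[\alpha(t),f_0]$ and show that $\phi(0)=0$ while its (right) derivative satisfies $\phi'(0)>0$; since $\phi'$ will be continuous near $0$, this forces $\phi(t)>0$ for all sufficiently small $t>0$, which is precisely the claim $E[\alpha(t),f_0]>0$. Because the game is zero-sum and symmetric we have $\phi(0)=E[f_0,f_0]=0$, so everything reduces to the sign of the first derivative. Using the identity $E[g,f_0]=\langle g,\nabla E(f_0)\rangle$, which is immediate from the definition of $E$ and of the selection gradient \eqref{eq:selection-grad}, and using that $\alpha$ is a $C^1$ curve in $L^p$ by Theorem~\ref{Lp-theorem}, I would differentiate to obtain $\phi'(t)=\langle A\alpha(t),\nabla E(f_0)\rangle$, so that
\[
\phi'(0)=\langle A f_0,\nabla E(f_0)\rangle .
\]

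Evaluating $\phi'(0)$ splits according to whether the MCA constraint is active at $f_0$, i.e.\ according to the value of $H(w(f_0))$. If $w(f_0)<0$ then $Af_0=\nabla E(f_0)$ and $\phi'(0)=\|\nabla E(f_0)\|^2$. If $w(f_0)\ge 0$ then $Af_0=(1-P)\nabla E(f_0)$, and since $P$ is the orthogonal projection onto $\mathrm{span}(x-\tfrac12)$, the operator $1-P$ is also an orthogonal projection; using its idempotency and self-adjointness one gets $\phi'(0)=\langle (1-P)\nabla E(f_0),\nabla E(f_0)\rangle=\|(1-P)\nabla E(f_0)\|^2$. In either case $\phi'(0)$ is a squared $L^2$ norm, hence non-negative, and I would argue it is strictly positive exactly because $f_0$ is non-constant: $\phi'(0)=0$ would force $\nabla E(f_0)=0$ in the first case, or $\nabla E(f_0)\in\mathrm{span}(x-\tfrac12)$ in the second; but $\frac{d}{dx}\nabla E(f_0)=2f_0$ by \eqref{eq:ddxE}, so either possibility makes $f_0$ constant, contradicting the hypothesis.

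The step requiring the most care, and the main obstacle, is justifying that $\phi$ is genuinely differentiable at $t=0$, since the Heaviside factor $H(w(\alpha_t))$ appearing in $A$ can jump as the trajectory meets the boundary $w=0$. The remedy is to observe that $H(w(\alpha_t))$ is locally constant near $t=0$. The functional $t\mapsto w(\alpha_t)=\langle \alpha_t,x-\tfrac12\rangle$ is continuous, being a bounded linear functional composed with the continuous curve $\alpha$, so whenever $w(f_0)\neq 0$ the sign of $w(\alpha_t)$ is preserved for small $t$ and $A$ coincides with one of the fixed bounded linear operators $\nabla E$ or $A_c$ on a neighborhood of $0$. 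In the boundary case $w(f_0)=0$, equivalently $\mca(f_0)=\tfrac12$, Lemma~\ref{lem:increase-mca} together with Theorem~\ref{Lp-theorem} guarantees that $\mca(\alpha_t)=\tfrac12$ is preserved, so $w(\alpha_t)\equiv 0$, $H\equiv 1$, and again $A=A_c$ is a fixed linear operator.

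On each such neighborhood $\alpha$ solves a linear ODE, $\phi$ is smooth, and the computation of $\phi'(0)$ above is valid. Its strict positivity, established in the second paragraph, then yields $E[\alpha(t),f_0]=\phi(t)>0$ for all sufficiently small $t>0$, completing the argument.
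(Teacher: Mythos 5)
Your proposal is correct and takes essentially the same route as the paper's proof: both reduce the claim to showing $\left.\frac{d}{dt}E[\alpha_t,f_0]\right|_{t=0}=\langle Af_0,\nabla E(f_0)\rangle>0$, split into the cases $w(f_0)<0$ and $w(f_0)\geq 0$ (using continuity of $t\mapsto w(\alpha_t)$, respectively invariance of the MCA from Lemma~\ref{lem:increase-mca}, to fix the Heaviside factor near $t=0$), and conclude from continuity of the derivative along the trajectory. Your identity $\phi'(0)=\|(1-P)\nabla E(f_0)\|^2$ via self-adjointness and idempotency of the orthogonal projection is exactly the paper's Cauchy--Schwarz computation in disguise, with the same equality analysis (via $\frac{d}{dx}\nabla E(f_0)=2f_0$) showing strictness whenever $f_0$ is non-constant.
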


\begin{proof}
First consider the case $\mca(f_0)<\frac{1}{2}$.
Since $f_0$ is non-negative and not constant, $\int_0^1 f_0(x)\,dx>0$.
The solution curve $\alpha$ is continuous, so for sufficiently small $t$, $\int_0^1 \alpha_t(x)\,dx>0$.
The constraint $\mca(f_0)<\frac{1}{2}$ is equivalent to 
\[
\int_0^1 (x-\tfrac{1}{2}) f_0(x)\,dx<0,
\]
and again, for small $t$, this holds also for $\alpha_t$ by the continuity of the solution curve.
That is, there exist a constant $b>0$ and an open interval $-b<t<b$ such that $\mca(\alpha_t)<\frac{1}{2}$ for all $t\in(-b,b)$.
Then,
\begin{align*}
\frac{d}{dt} E[\alpha_t,f_0] \,
&= \int_0^1 A\alpha_t (x)\left(\int_0^x f_0(y)\,dy - \int_x^1 f_0(y)\,dy\right)dx\\
&= \int_0^1 \left(\int_0^x \alpha_t (y)\,dy-\int_x^1 \alpha_t (y)\,dy\right)\left(\int_0^x f_0(y)\,dy - \int_x^1 f_0(y)\,dy\right)dx\\
&=\langle \nabla E(\alpha_t), \nabla E(f_0)\rangle.
\end{align*}
Thus,
\begin{equation}
\left.\frac{d}{dt} E[\alpha_t,f_0]\right|_{t=0}
=\langle \nabla E(f_0), \nabla E(f_0)\rangle>0.
\label{eq:de-positive}
\end{equation}
Consider $t$ in the interval $-b<t<b$ such that $\mca(\alpha_t)<\frac{1}{2}$.
Then $A(\alpha_t)=\nabla E(\alpha_t)$.
The function $t\mapsto \langle \nabla E(\alpha_t), \nabla E(f_0)\rangle$ is continuous, since the inner product $\langle\,,\rangle$ is continuous and $\nabla E$ is Lipschitz continuous.
Since the derivative of $E[\alpha(t),f_0]$ is positive at $t=0$, by~(\ref{eq:de-positive}), and continuous, there exists a time $t>0$ such that $E[\alpha(t),f_0]>0.$

Second, consider the case $\mca(f_0)=\frac{1}{2}$.
Then the selection gradient is $A(f_0)=(1-P)\nabla E(f_0)$.
By construction, then, $\mca(\alpha_t)=\frac{1}{2}$ for all $t\geq 0$.
Then,
\begin{multline*}
\frac{d}{dt} E[\alpha_t,f_0]
= \int_0^1 A\alpha_t (x)\left(\int_0^x f_0(y)\,dy - \int_x^1 f_0(y)\,dy\right)dx\\
= \int_0^1 \left(\int_0^x \alpha_t (y)\,dy-\int_x^1 \alpha_t (y)\,dy\right)\left(\int_0^x f_0(y)\,dy - \int_x^1 f_0(y)\,dy\right)dx\\
\quad -12\int_0^1(x-\tfrac{1}{2})\alpha_t(x) dx \int_0^1(x-\tfrac{1}{2}) \left(\int_0^x f_0(y)\,dy - \int_x^1 f_0(y)\,dy\right) dx\\
 = \langle A(\alpha(t)),\nabla E(f_0)\rangle
\end{multline*}
Since $\mca(\alpha_t)=\frac{1}{2}$ for all at all positive times, $\|A(\alpha_{t}-\alpha_{t'})\| < K\|\alpha_{t}-\alpha_{t'}\|$ for $t,t'>0$ for some $K>0$ by Lemma~\ref{lem:bound-p}.
It follows that $t\mapsto \langle A(\alpha(t)),\nabla E(f_0)\rangle$ is continuous.
At $t=0$ we have $\alpha(0)=f_0$ and 
\begin{equation*}
E[A(f_0),f_0] = \langle A(f_0),\nabla E(f_0)\rangle 
=\frac{1}{\|\tfrac{1}{2}-x\|^2}\left( \|\tfrac{1}{2}-x\|^2\|\nabla E(f_0)\|^2-\langle \tfrac{1}{2}-x,\nabla E(f_0)\rangle^2 \right)
\end{equation*}
By the Cauchy-Schwarz inequality,
\begin{equation}
\label{eq:CS-ineq}
\|\tfrac{1}{2}-x\|^2\|\nabla E(f_0)\|^2\geq \langle \tfrac{1}{2}-x,\nabla E(f_0)\rangle^2
\end{equation}
with equality if and only if $a(\frac{1}{2}-x)=b\nabla E(f_0)(x)$ for non-zero constants $a,b$.
Hence by the Cauchy-Schwarz inequality, $E[A(f_0),f_0]$ is positive or zero, and it is zero if and only if $a(\frac{1}{2}-x)=b\nabla E(f_0)(x)$. 
Since $a(\frac{1}{2}-x)=b\nabla E(f)(x)$ only if $f$ is constant almost everywhere, the inequality~(\ref{eq:CS-ineq}) is strict whenever $f_0$ is not constant.
\end{proof}

The adaptive dynamics solutions are such that a constant function is the only function that is not defeated by the selection gradient, but as the next proposition will show, it can be difficult to find a solution that becomes a constant function at some point in time.

\begin{proposition}
\label{prop:never-const}
Assume that the initial condition of the problem~(\ref{eq:problem}) is such that $\mca(f_0)=\frac{1}{2}$ and that $f_0$ is not constant.
Then the solution to this problem, $\alpha(t)$, is never the constant function.
\end{proposition}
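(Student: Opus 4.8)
The plan is to exploit that, for initial data with $\mca(f_0)=\tfrac12$, the adaptive dynamics~\eqref{eq:problem} reduces to a \emph{linear, autonomous} flow, and then to invert this flow. First I would observe that since $\mca(f_0)=\tfrac12$, Theorem~\ref{Lp-theorem} guarantees $\mca(\alpha_t)=\tfrac12$, equivalently $w(\alpha_t)=0$, for every $t\ge 0$. Hence $H(w(\alpha_t))=1$ along the entire orbit, so the piecewise-defined operator $A$ in~\eqref{eq:def-A} agrees throughout with the single bounded linear operator $A_c$ introduced in the proof of Theorem~\ref{Lp-theorem}. Consequently $\alpha$ solves the genuinely linear Cauchy problem $\dot\alpha=A_c\alpha$, $\alpha(0)=f_0$, on all of $[0,\infty)$.

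Next I would use that $A_c:L^p[0,1]\to L^p[0,1]$ is bounded (Lemma~\ref{lem:bound-p}). For a bounded generator the solution operator is the operator exponential $e^{tA_c}=\sum_{n\ge 0}\tfrac{t^n}{n!}A_c^{\,n}$, which converges in operator norm and defines a \emph{group} $\{e^{tA_c}\}_{t\in\R}$ of invertible bounded operators with $(e^{tA_c})^{-1}=e^{-tA_c}$; in particular $\alpha(t)=e^{tA_c}f_0$. Moreover, by Theorem~\ref{teo:eigen-f} every constant function $c$ lies in $\ker A_c$, so $A_c^{\,n}c=0$ for all $n\ge 1$ and therefore $e^{tA_c}c=c$ for every $t$: the constants are fixed points of the flow.

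The contradiction argument then closes the proof. Suppose, for contradiction, that $\alpha(t_0)$ equals a constant function $c$ for some $t_0>0$. Applying the inverse solution map gives $f_0=e^{-t_0 A_c}\alpha(t_0)=e^{-t_0 A_c}c=c$, contradicting the hypothesis that $f_0$ is not constant. Equivalently, the constant curve $\beta(t)\equiv c$ solves the same linear equation and agrees with $\alpha$ at $t_0$, so backward uniqueness forces $\alpha\equiv c$ and hence $f_0=c$. I expect the only delicate point to be justifying that the flow may be inverted, i.e.\ that reaching a constant at a finite time forces the \emph{initial} datum to have been constant; this is precisely where the boundedness of $A_c$ is essential, since it upgrades the forward Picard--Lindel\"of semiflow of Theorem~\ref{Lp-theorem} to a genuine group and thereby supplies backward uniqueness.
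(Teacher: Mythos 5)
Your proof is correct, and it takes a genuinely different route from the paper's. The paper stays entirely inside the game: by Lemma~\ref{lem:nonconstant}, $E[\alpha(t),f_0]>0$ for sufficiently small $t>0$, while the identity $E[u,g]=2\bigl(\tfrac{1}{2}-\mca(g)\bigr)\int_0^1 g(x)\,dx$ for a constant function $u$ shows that any constant strategy has \emph{zero} payoff against $f_0$ (because $\mca(f_0)=\tfrac{1}{2}$), so $\alpha(t)$ cannot be constant. You instead linearize and invert the flow: since $\mca(\alpha_t)=\tfrac{1}{2}$ for all $t$ by Theorem~\ref{Lp-theorem} (note this uses $\int_0^1 f_0\neq 0$, which is implicit in $\mca(f_0)$ being defined), one has $w(\alpha_t)=0$ and hence $H(w(\alpha_t))=1$ under the paper's convention $H(0)=1$, so $\alpha$ solves the linear problem $\dot\alpha=A_c\alpha$; boundedness of $A_c$ (Lemma~\ref{lem:bound-p}) makes $e^{tA_c}$ a genuine group, constants are fixed points since $A_c c=(1-P)\nabla E(c)=0$ (indeed $\nabla E(c)(x)=2c\,(x-\tfrac{1}{2})$ lies in the range of $P$, consistent with Theorem~\ref{teo:eigen-f}), and backward uniqueness then forces $f_0=e^{-t_0A_c}c=c$, a contradiction. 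Each approach buys something: the paper's argument is elementary, avoids operator exponentials, and connects the statement directly to the game-theoretic notion of defeat; but, as written, it excludes constancy only on the small-$t$ interval where Lemma~\ref{lem:nonconstant} applies, whereas your group/backward-uniqueness argument rules out constancy for \emph{all} $t\geq 0$ in one stroke --- it is precisely the Picard--Lindel\"of uniqueness for the bounded generator $A_c$ that upgrades the semiflow to a flow and makes the word ``never'' airtight.
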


\begin{proof}
If $u$ is the constant function, without loss of generality assume $u=1$, then for any function $g$ in $L^p[0,1]$ for $1 \leq p \leq \infty$,
\begin{equation}
E[u,g] = 2\left(\tfrac{1}{2}-\mca(g)\right)\int_0^1g(x)\,dx.
\label{eq:E-of-u}
\end{equation}
Let $\alpha$ be the integral curve of $A$ with $\alpha(0)=f_0$. 
Since $f_0$ is not constant, for sufficiently small $t>0$,
\[
E[\alpha(t),f_0]>0
\]
by Lemma~\ref{lem:nonconstant}.
This is impossible if $\alpha(t)$ is the constant function by equation~(\ref{eq:E-of-u}), since $\mca(f_0)=\frac{1}{2}$.
\end{proof}

Proposition~\ref{prop:never-const} says that if the MCA of the initial data equals $\frac{1}{2}$ then the stationary solution is either existing from the start or it is never reached.

\subsection{Implications for the evolution of strategies in the function-valued team game} 
\label{sec:game-dynamics}

Whereas the adaptive dynamics was analyzed on $L^p[0,1]$ in Section~\ref{sec:existence}, this section describes the adaptive dynamics of function-valued strategies which imposes further restrictions on the initial data for the adaptive dynamics.  

One challenge in dealing with strategies is coming from the non-negativity of strategies. 
Recall that if $f$ is a strategy, then
\begin{equation}
\label{eq:int-f}
\int_a^b f(x)\,dx
\end{equation}
represents the number of individuals in the species $f$ with competitive ability between $a$ and $b$. 
Thus, the quantity~(\ref{eq:int-f}) should be a non-negative number for any choice of interval $(a,b) \subset [0,1]$.
The adaptive dynamics might try to evolve a strategy out of the strategy space, for example by breaking the non-negativity condition. 

\begin{theorem}
\label{teo:strategy}
Let $f_0$ be either continuous or an element of $L^\infty[0,1]$. Assume that it satisfies \eqref{eq:bm_cont_setup} and $f_0(x)\geq K$ for some $K>0$ and for all $x\in[0,1]$. Then for some $T>0$ there exists a solution $\alpha$ to \eqref{eq:problem} that satisfies \eqref{eq:bm_cont_setup} for $t\in[0,T]$ and is respectively, continuous or an element of $L^\infty[0,1]$ for all $t \in [0,T]$.
\end{theorem}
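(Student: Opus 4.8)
The plan is to read off the solution and its regularity from the existence theory already in place, and then to show that the three requirements in \eqref{eq:bm_cont_setup} persist for a short time, with the hypothesis $f_0\geq K>0$ doing the real work for non-negativity. First I would invoke Theorem~\ref{Lp-theorem}: it produces a global integral curve $\alpha:[0,\infty)\to L^p[0,1]$ of \eqref{eq:problem} for every $p$, and it records that $C^k$-regularity of $f_0$ in $x$ is propagated in time, so the $L^\infty$ case is immediate with $p=\infty$. For the continuous case I would note that in each of the two regimes $w<0$ and $w\geq0$ the relevant operator ($\nabla E$ or $A_c$, both bounded and linear on $C^0[0,1]$ with the sup norm by Lemma~\ref{lem:bound-p}) yields, via Picard--Lindelöf, a $C^1$ time-curve into $C^0[0,1]$; these patch at the switching time exactly as in Theorem~\ref{Lp-theorem}, and the point is that evaluation $\alpha(t)\mapsto\alpha(t)(x)$ at a fixed $x$ is then a legitimate pointwise operation. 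This already settles the regularity assertion.

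Next I would control the sup norm of the velocity. By Lemma~\ref{lem:bound-p}, $\|A(\alpha(s))\|_\infty\leq L\|\alpha(s)\|_\infty$ with $L=1+12\|x-\tfrac12\|_\infty\|x-\tfrac12\|_1$; feeding this into the integral form \eqref{eq:integral-main} and applying Gr\"onwall gives $\|\alpha(t)\|_\infty\leq\|f_0\|_\infty e^{Lt}$, hence $\|A(\alpha(s))\|_\infty\leq L\|f_0\|_\infty e^{Ls}$ on any bounded interval. Non-negativity then follows from the lower bound by writing \eqref{eq:integral-main} pointwise,
\[
\alpha(t)(x)=f_0(x)+\int_0^t A(\alpha(s))(x)\,ds\geq K-\int_0^t\|A(\alpha(s))\|_\infty\,ds\geq K-\|f_0\|_\infty\big(e^{Lt}-1\big),
\]
whose right-hand side is strictly positive for all $t$ in some interval $[0,T]$ with $T<\tfrac1L\log\!\big(1+K/\|f_0\|_\infty\big)$. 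Thus $\alpha(t)$ is bounded below by a positive constant on $[0,T]$ (pointwise for continuous data, a.e.\ for $L^\infty$ data), which gives both non-negativity and, on integrating, the strict positivity $\int_0^1\alpha(t)(x)\,dx>0$ required by \eqref{eq:bm_cont_setup}.

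It remains to verify $\mca(\alpha(t))\leq\tfrac12$ on $[0,T]$, and here I would use the growth identity \eqref{eq:mca-growth} from Lemma~\ref{lem:increase-mca}. Since the previous step makes $\alpha(t)$ a genuine strategy on $[0,T]$, the second term of \eqref{eq:mca-growth} is non-negative, while the first term $2(\tfrac12-\mca(\alpha_t))^2$ vanishes exactly when $\mca(\alpha_t)=\tfrac12$; moreover at $\mca(\alpha_t)=\tfrac12$ one has $w(\alpha_t)=0$, so $H(w(\alpha_t))=1$ and the second term drops out as well. Hence $\tfrac12$ acts as a barrier for the scalar quantity $\mca(\alpha_t)$: it is non-decreasing yet its rate of change vanishes at the level $\tfrac12$, so a solution issuing from $\mca(f_0)\leq\tfrac12$ cannot cross it (and if $\mca(f_0)=\tfrac12$ this is precisely the invariance already proved in Theorem~\ref{Lp-theorem}). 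Collecting the three facts establishes \eqref{eq:bm_cont_setup} throughout $[0,T]$.

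I expect the non-negativity step to be the main obstacle, since the selection gradient $\nabla E$ is not sign-preserving and can drive $f$ below zero; the assumption $f_0\geq K>0$ is exactly what buys a positive time window, and the only genuine care needed is to justify the pointwise (respectively a.e.) reading of the integral form \eqref{eq:integral-main}, which is why working in $C^0[0,1]$ for the continuous case is convenient.
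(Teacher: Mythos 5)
Your proposal is correct and follows essentially the same route as the paper: existence and propagation of regularity are quoted from Theorem~\ref{Lp-theorem}, and non-negativity on a short time interval is extracted from the integral form \eqref{eq:integral-main} together with the $L^\infty$ operator bound of Lemma~\ref{lem:bound-p}, with the hypothesis $f_0\geq K>0$ supplying the margin. Your quantitative Grönwall estimate $\alpha(t)(x)\geq K-\|f_0\|_\infty\left(e^{Lt}-1\right)$ is in fact a cleaner rendering of this step than the paper's bound $\|\alpha(t)-f_0\|_\infty\leq t\bar{M}$ (whose $\bar{M}$ is defined somewhat circularly as $\sup_{t\in[0,T]}\|\alpha(t)-f_0\|_\infty$ rather than as a bound on the velocity), and your explicit verification of the MCA constraint via \eqref{eq:mca-growth} and the invariance $w(\alpha_t)\geq 0\implies\frac{d}{dt}w(\alpha_t)=0$ spells out a part of \eqref{eq:bm_cont_setup} that the paper's proof leaves implicit in Theorem~\ref{Lp-theorem} and Lemma~\ref{lem:increase-mca}.
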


\begin{proof}
First, observe that continuous functions on $[0,1]$ are all contained in $L^\infty[0,1]$.  The problem~(\ref{eq:problem}) admits a solution $\alpha:[0,\infty)\to L^\infty[0,1]$ 
by Theorem~\ref{Lp-theorem}, which is as smooth as the initial data. 
Thus $\alpha(t)-f_0$ is well-defined in $L^\infty[0,1]$ for all $t\in[0,T]$ for some $T>0.$
Define
\[
\bar{M} = \sup_{t\in[0,T]} \|\alpha(t)-f_0\|_\infty = \sup_{t\in[0,T]} \sup_{x\in[0,1]}|\alpha(t)(x)-f_0(x)|.
\]
Then,
\[
\|\alpha(t)-f_0\|_\infty=\left\|\int_0^t A\alpha(s)\,ds\right\|_\infty\leq t\bar{M}\quad\text{ for all } t<T.
\]
This equation shows that if $t$ is sufficiently small, then $\alpha(t)$ is sufficiently close to $f_0$ in the supremum norm. 
Therefore, if $f_0$ is such that $f_0(x)\geq K$ for all $x$ and for some $K>0$, we fix a number $\varepsilon$ such that $0<\varepsilon<K$ and then select $T>0$ small enough so that $\alpha(t)(x)\geq\varepsilon$ for all $t\in[0,T]$ and all $x\in[0,1].$
\end{proof}

\begin{corollary}
If $f_0$ is not a constant function, and if $f_0(x)\geq K>0$ for all $x$, then the adaptive dynamics evolves to a strategy which defeats $f_0$. 
\end{corollary}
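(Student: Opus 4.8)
The plan is to read this corollary as the combination of the two immediately preceding results, Lemma~\ref{lem:nonconstant} and Theorem~\ref{teo:strategy}, which supply the two halves of the assertion: that $\alpha(t)$ \emph{defeats} $f_0$, and that $\alpha(t)$ is genuinely a \emph{strategy}. First I would verify that $f_0$ meets the hypotheses of both. The assumption $f_0(x)\geq K>0$ forces $f_0$ to be non-negative, and together with the standing assumption that $f_0$ is continuous or in $L^\infty[0,1]$ and satisfies~\eqref{eq:bm_cont_setup}, this places $f_0\in\cL^\infty_+[0,1]$; since $f_0$ is also non-constant, Lemma~\ref{lem:nonconstant} applies, and the hypotheses of Theorem~\ref{teo:strategy} hold verbatim.

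Next I would extract a time window from each result. Theorem~\ref{teo:strategy} produces a $T>0$ such that the solution $\alpha(t)$ of~\eqref{eq:problem} satisfies~\eqref{eq:bm_cont_setup}, and in particular remains non-negative, for every $t\in[0,T]$; this is exactly the statement that $\alpha(t)$ is a legitimate strategy on that interval. From the proof of Lemma~\ref{lem:nonconstant} I would read off not merely the existence of a single time but the stronger fact that $E[\alpha(t),f_0]$ is strictly increasing at $t=0$: its derivative there equals $\langle\nabla E(f_0),\nabla E(f_0)\rangle$ (or the strictly positive Cauchy--Schwarz expression in the $\mca(f_0)=\tfrac12$ case), while $E[\alpha(0),f_0]=E[f_0,f_0]=0$ because the game is zero-sum. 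Continuity then yields a $\delta>0$ with $E[\alpha(t),f_0]>0$ for all $t\in(0,\delta)$.

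Finally I would intersect the two windows: choosing any $t^\ast\in(0,\min\{T,\delta\})$, the object $\alpha(t^\ast)$ is simultaneously a valid strategy (by Theorem~\ref{teo:strategy}) and satisfies $E[\alpha(t^\ast),f_0]>0$ (by Lemma~\ref{lem:nonconstant}), which is precisely the claim. The only point requiring care---and the sole content of the corollary beyond Lemma~\ref{lem:nonconstant}---is this matching of the two intervals: Lemma~\ref{lem:nonconstant} by itself produces a defeating element of $L^\infty[0,1]$ that could in principle have left the set of strategies, so it is the non-negativity persistence furnished by Theorem~\ref{teo:strategy} that upgrades ``defeating function'' to ``defeating strategy.'' I expect no genuine obstacle once both windows are seen to be intervals anchored at $t=0$, so that their overlap is automatically non-empty.
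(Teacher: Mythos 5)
Your proposal is correct and matches the paper's proof, which is exactly the one-line combination of Theorem~\ref{teo:strategy} and Lemma~\ref{lem:nonconstant}; your intersection of the two time windows anchored at $t=0$ is precisely the implicit content of that citation, spelled out carefully. Nothing further is needed.
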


\begin{proof}
This follows from Theorem~\ref{teo:strategy} and Lemma~\ref{lem:nonconstant}.
\end{proof}

For some initial conditions, the solution to \eqref{eq:problem} no longer can be considered a strategy, because it may assume negative values at some $x \in [0,1]$.
This typically happens if the initial data $f_0$ does not satisfy the positivity condition of Theorem~\ref{teo:strategy}.
For example, if
\begin{equation}
f_0(x)=
\begin{cases}
1-x/r, & x\in[0,r]\\
a(x-r), & x\in[r,1]
\end{cases}
\label{eq:impossible}
\end{equation}
where $\frac{1}{2}<r<1$ and $a$ is a positive number, which is selected such that $f_0$ satisfies $\mca(f_0)=\frac{1}{2}.$
Then the adaptive dynamics on $L^\infty[0,1]$ yields a solution which is negative for some $x$ for infinitesimal $t$.
The constrained selection gradient $A(f)$ at $t=0$ is shown as the dashed line in Figure~\ref{fig:impossible}.
In particular, it is negative at $x=r$.
This is interesting, because the strategies according to the adaptive dynamics \eqref{eq:problem} exist by Theorem \ref{Lp-theorem} and defeat $f_0$ by Lemma \ref{lem:nonconstant}.  However, it is not clear how one could interpret a function that assumes negative values in the function-valued team game.  It is also interesting to note that the \em only \em stationary strategies in the adaptive dynamics of the function-valued team game are the equilibrium strategies as shown in Proposition \ref{prop:stationary}.

\begin{figure}[H]
 \centering
 \includegraphics[width=0.6\columnwidth]{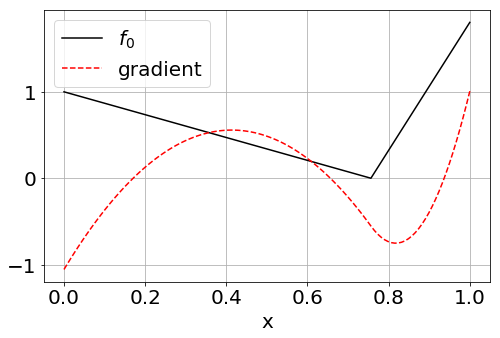}
 \caption{At the initial condition~(\ref{eq:impossible}) with $r=3/4$, the adaptive dynamics is trying to move the function away from the space of strategies by breaking the non-negativity constraint.}
 \label{fig:impossible}
\end{figure}

\subsection{Evolution towards the equilibrium}
The stationary points of the adaptive dynamics are specified in Proposition \ref{prop:stationary}.
Let $u$ be a uniform distribution over the unit interval, that is, 
$u(x)$ is a positive constant for all $x$ in the interval $0\leq x\leq 1.$
It is a stationary point for the adaptive dynamics as well as an equilibrium strategy in the game of teams.
Be ``reversing time,'' we can show that there exist strategies that will evolve to such an equilibrium strategy.
The idea is that the following problems are identical:
First, consider the forward-time problem
\begin{equation}
\dot{\alpha} = \nabla E(\alpha), \quad \alpha(0)=\alpha_0, \quad \alpha(T)=u,
\label{eq:A-goto-u}
\end{equation}
where $\alpha_0$ is a strategy such that $\mca(\alpha_0)<\frac{1}{2}.$
Then consider the reverse-time initial value problem
\begin{equation*}
\dot{\alpha} = -\nabla E(\alpha), \quad \alpha(0)=u, \quad \alpha(T)=\alpha_0.
\label{eq:A-gofrom-u}
\end{equation*}
There is no unknown in solving this initial value problem, but $\alpha_0$ is determined by assigning it $\alpha(T)=\alpha_0$. 
Compare this to~(\ref{eq:A-goto-u}), in which $\alpha_0$ is unknown.
By reversing time, we may solve an equation with unknown stopping time $T$ but with known initial data.

It remains to show that there exists a $T>0$ such that $\alpha_0=\alpha(T)$ is a strategy such that $\mca(\alpha_0)<\frac{1}{2}$.
This is however easy to prove using the following facts.
Since $u$ is a strictly positive function, 
there are functions in $L^\infty[0,1]$ that satisfy \eqref{eq:bm_cont_setup} with a strict inequality for the MCA that are arbitrarily close to $u$ in the $L^\infty$ norm. By Lemma~\ref{lem:increase-mca}, such functions have increasing MCAs.  Any such function can be used as $\alpha_0$ above to seed the initial value problem.

\section{Adaptive dynamics for the discrete team game} \label{sec:computations}

This section focuses on the discrete game of teams as described in \S \ref{sec:got}.  In the function-valued game, the measure $f(x)dx$ for an integrable function $f$ is used to define the distribution of competitive abilities. In the discrete game, the quantity of individuals with competitive ability equal to $j/M$ can be viewed as a point mass at $j/M$ weighted by the value of $f$ at $j/M$.  In this way one may compare 
\beq \label{eq:integration} 
\int_0 ^1 f(x) dx \approx  \sum_{j=0} ^M f(j/M), 
\eeq 
Then the selection gradient according to the function valued game is compared to 
\beq \label{eq:discrete-gradient} 
\nabla E f(k/M) &=& \int_0 ^{k/M} f(r) dr - \int_{k/M} ^1 f(r) dr \nn \\ 
&\approx & L \bmy = \sum_{j=0} ^{k-1} f(j/M) - \sum_{j=k+1} ^M f(j/M) . 
\eeq 
Above, the column vector $\bmy$ has components $y_j = f(j/M)$, and the $(M+1) \times (M+1)$ matrix 
\beq 
L=
\begin{bmatrix}
	0 & -1 & -1 & ... & -1\\
	1 &  0 & -1 & ... & -1\\
	1 &  1 &  0 & ... & -1\\
	\vdots &  & &     &   \\
	1 &  1 &  1 & ... &  0
\end{bmatrix}, \label{eq:defn_Lmatrix}
\eeq 
As defined in \eqref{eq:discrete-mca}, the mean competitive ability and its constraint are 
\[ \mca(\bmy) = \frac{\sum_{j=0} ^M (j/M) y_j}{\sum_{j=0} ^M y_j} \leq \frac 1 2.\]
We note that in our convention, the indices of a vector in $\R^{M+1}$ are $\bv = (v_0, v_1, \ldots, v_M)$.  We further make the assumptions on $\bmy$ given in \eqref{eq:discrete_setup}.  Then, it is straightforward to compute that the MCA constraint is an equality if and only if $\bmy$ is orthogonal to the vector with components $(j/M-1/2)$.  We therefore define the vector 
\beq \label{eq:w_discrete} \bw = \begin{bmatrix} -1/2 \\ 1/M - 1/2 \\ \vdots \\ j/M - 1/2 \\ \vdots \\ 1/2 \end{bmatrix}. \eeq 
We further compute that $\mca(\bmy) \geq 0$ is equivalent to $\bw\cdot \bmy \geq 0$. 
Consequently the projection onto the 
normal to the set of strategies $\bmy$ satisfying $\mca(\bmy)=1/2$ is given by projecting onto the span of $\bw$.  For a strategy $\bmy$ this projected vector is given by multiplying $\bmy$ on the left with the matrix
\begin{equation}
P = \frac{\bw \bw^T}{\|\bw\|^2},
\label{eq:discrete-projection}
\end{equation}
where $\|\bw\|^2=\bw^T\bw$ is the sum of the square of the components of $\bw.$
Then $I-P$, where $I$ is the $(M+1) \times (M+1)$ identity matrix, is the projection onto the set of strategies with MCA equal to $1/2$.  

Analogous to the function-valued setting, adaptive dynamics predicts that strategies evolve according to the linear ODE system
\beq \label{eq:ad_discrete0} \frac{d}{dt} \bmy = A \bmy, \quad \bmy (0) = \bmy_0, \quad A\bmy  = (I-H(\bw \cdot \bmy) P)L \bmy. \eeq 
Above $H$ is the Heaviside, so that $H(\bw\cdot\bmy)=1$ if and only if $\mca(\bmy)\geq 1/2$, otherwise it is zero.  Here $\bmy_0$ is the initial strategy that is assumed to satisfy \eqref{eq:discrete_setup} and the MCA constraint \eqref{eq:discrete-mca}.

\subsection{The evolution of strategies according to adaptive dynamics for the discrete team game}\label{sec:algebraic} 

Here we solve the ODE system \eqref{eq:ad_discrete0}. For a system of first order ODEs of this type, if an eigenvalue $\lambda$ of $A$ has multiplicity $r$ and $k$ linearly independent eigenvectors $\bv_1,...,\bv_k$ with $r=k$, 
then a basis of solutions for this eigenvalue consists of 
\[ e^{\lambda t} \bv_1, \ldots, e^{\lambda t} \bv_r.\]
If $k<r$, then the basis consists of $e^{\lambda t} \bv_j$ for $j=1, \ldots, k$ together with  $r-k$ solutions of the form $e^{\lambda t}p(t)$, where $p$ is a polynomial of degree at most $r-k$ with vector coefficients. These vector coefficients are linear combinations of generalized eigenvectors.  We recall that a generalized eigenvector for the eigenvalue $\lambda$ is a nonzero vector $\bv$ such that  for some $m \geq 1$, $(\lambda I - A)^m \bv=0$ but $(\lambda I - A)^{m-1} \bv \neq 0$.     Consequently, in order to determine the solutions of the ODE system \eqref{eq:ad_discrete0}, we first establish properties of the matrices involved in this system and determine their eigenvalues.  The projection matrix $P$ is symmetric and real, so its eigenvalues are real. Since it is a projection, all its eigenvalues are equal to either 1 or zero. The matrix $L$, which maps $\bmy$ to the unconstrained selection gradient, is an anti-symmetric Toeplitz type matrix.  It has purely imaginary eigenvalues (as do all real anti-symmetric square matrices) that occur in pairs of complex conjugates.  
If there is an odd number of eigenvalues, one of them is zero. 

\begin{lemma}
Let $L$ denote the $(M+1) \times (M+1)$ skew-symmetric matrix with all entries above the diagonal equal to $-1$, and all entries below the diagonal equal to $1$ as shown in \eqref{eq:defn_Lmatrix}.  Then the rank of $L$ is $M+1$ when $M+1$ is even, and it is $M$ when $M+1$ is odd.
\label{lem:L}
\end{lemma}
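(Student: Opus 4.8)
The plan is to determine $\dim\ker L$ directly and then read off the rank via $\operatorname{rank} L = (M+1) - \dim\ker L$. Writing a vector as $\bv = (v_0,\dots,v_M)$, the definition of $L$ in~\eqref{eq:defn_Lmatrix} yields the entrywise formula
\[
(L\bv)_i = \sum_{j=0}^{i-1} v_j - \sum_{j=i+1}^{M} v_j, \qquad 0 \le i \le M,
\]
which is exactly the discrete selection gradient appearing in~\eqref{eq:discrete-gradient}. This closed form is what makes the kernel tractable.

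The first key step is to difference consecutive components. A short telescoping computation gives, for $1 \le i \le M$,
\[
(L\bv)_i - (L\bv)_{i-1} = v_{i-1} + v_i .
\]
Hence if $\bv \in \ker L$, then $v_i = -v_{i-1}$ for every $i$, so $\bv$ must be a scalar multiple of the alternating vector $\bm{u} = (1,-1,1,\dots,(-1)^M)$. This shows, independently of the parity of $M+1$, that $\dim\ker L \le 1$.

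The second step is to test whether $\bm{u}$ itself lies in $\ker L$. Using the same telescoping identity, $(L\bm{u})_i = (L\bm{u})_0$ for all $i$ once the alternating relation $v_{i-1}+v_i = 0$ holds, so it suffices to evaluate the single component
\[
(L\bm{u})_0 = -\sum_{j=1}^{M}(-1)^j = -\Bigl(\sum_{j=0}^{M}(-1)^j - 1\Bigr),
\]
which equals $0$ when $M$ is even and equals $1$ when $M$ is odd. Therefore $\bm{u}\in\ker L$ precisely when $M$ is even, that is, when $M+1$ is odd.

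Combining the two steps finishes the argument: when $M+1$ is odd we obtain $\dim\ker L = 1$ and $\operatorname{rank} L = M$, while when $M+1$ is even the kernel is trivial, so $L$ is invertible and $\operatorname{rank} L = M+1$. I expect the only delicate point to be the parity bookkeeping in evaluating $(L\bm{u})_0$; as a cross-check, recall that a real skew-symmetric matrix always has even rank, which is consistent with both outcomes ($M+1$ even, and $M$ even when $M+1$ is odd) and in fact forces $\dim\ker L$ to carry exactly the parity found above.
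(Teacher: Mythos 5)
Your proof is correct, and it is essentially the kernel-side dual of the paper's argument, with a genuinely different second half. The paper row-reduces $L$ directly: its first move, replacing row $j$ by row $j$ minus row $j+1$, encodes exactly your telescoping identity $(L\bv)_i - (L\bv)_{i-1} = v_{i-1}+v_i$; it then completes the elimination by adding the odd-indexed rows to the last row and reads the rank off the resulting triangular form, extracting the alternating kernel vector only as a closing remark. You instead apply the same identity to the system $L\bv=0$, conclude that any kernel element is a multiple of the alternating vector $\bm{u}$ (so $\dim\ker L\leq 1$ regardless of parity), and replace the paper's second batch of row operations by the single evaluation $(L\bm{u})_0 = -\sum_{j=1}^{M}(-1)^j$, which your parity bookkeeping handles correctly ($0$ iff $M$ is even, i.e.\ iff $M+1$ is odd); rank--nullity then finishes. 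What your route buys: the kernel characterization comes first and for free, and you avoid tracking the second elimination step, at the modest cost of invoking rank--nullity, which the paper's direct elimination does not need. Your closing observation that real skew-symmetric matrices have even rank is a sound consistency check, though not needed for the argument.
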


\begin{proof}
In Gauss's algorithm, we replace row $j$ with row $j$ minus row $j+1$ for all $1\leq j\leq M$ as shown below.  
\begin{equation*}
\left.
\begin{matrix}
0 & -1 & -1 & ... & -1\\
1 & 0  & -1 & ... & -1\\
1 & 1  & 0  & ... & -1\\
\vdots & & & & \vdots\\
1 & 1  & 1  & ... & 0 
\end{matrix}
\ 
\right|
\begin{matrix}
0 \\ 0 \\ 0 \\ \vdots  \\ 0
\end{matrix}
\iff
\left.
\begin{matrix}
	-1 & -1 & 0  & 0 & ... & 0\\
	0  & -1 & -1 & 0 & ... & 0\\
	0  &  0 & -1 & -1& ... & 0\\
	\vdots & & & & &\vdots\\
	1 & 1  & 1  &  1 & ... & 0 
\end{matrix}
\ 
\right|
\begin{matrix}
	0 \\ 0 \\ 0 \\ \vdots  \\ 0
\end{matrix}
\end{equation*}
If $M+1$ is even,  we add row $1$, $3$, and all of the odd rows up to row $M$ to the last row.  If $M+1$ is odd, we add row $1$, $3$, and all of the odd rows up to row $M-1$ to the last row.  In this way we obtain   

\begin{equation*}
\left.
\begin{matrix}
-1 & -1 & 0  & 0 & ... &0 & 0\\
0  & -1 & -1 & 0 & ... &0 & 0\\
0  &  0 & -1 & -1& ... &0 & 0\\
\vdots & & & & & &\vdots\\
0  &  0 & 0  & 0 & ... &-1 & -1\\
0 & 0  & 0  &  0 & ... &0 & a 
\end{matrix}
\ 
\right|
\begin{matrix}
0 \\ 0 \\ 0 \\ \vdots \\ 0 \\ 0
\end{matrix}
\quad\text{where }
a=
\begin{cases}
-1 & \text{when } M+1\text{ is even,}\\
0  & \text{when } M+1\text{ is odd.}
\end{cases}
\end{equation*}
We can deduce from this calculation that the kernel of $L$ when $M+1$ is odd is the span of the vector $(1,-1,1,-1,...,-1,1)$.
\end{proof}

\begin{proposition}
\label{prop:dimker}
Let $L$ and $P$ be defined in \eqref{eq:defn_Lmatrix} and \eqref{eq:discrete-projection}, respectively, and $I$ be the $(M+1) \times (M+1)$ identity matrix.  Then we have
\begin{equation*}
\dim \mathrm{Ker}\, (I-P)L =
\begin{cases}
2, & M+1\text{ odd}\\
1, & M+1\text{ even.}
\end{cases}
\end{equation*}
\end{proposition}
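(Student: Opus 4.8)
The plan is to reduce $\dim\mathrm{Ker}\,(I-P)L$ to the rank information about $L$ already established in Lemma~\ref{lem:L}, supplemented by a single orthogonality computation. First I would observe that $I-P$ is the orthogonal projection onto the hyperplane $\bw^\perp$, so its kernel is exactly $\mathrm{span}(\bw)$. Hence
\[
\mathrm{Ker}\,(I-P)L = \{\bmy : L\bmy\in\mathrm{span}(\bw)\} = L^{-1}\big(\mathrm{span}(\bw)\big),
\]
the preimage under $L$ of the line spanned by the constraint vector $\bw$.

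Writing $V=\mathrm{span}(\bw)$, I would apply the rank--nullity theorem to the restriction of $L$ to $L^{-1}(V)$. This restriction maps onto $V\cap\mathrm{Range}(L)$ and has kernel $\mathrm{Ker}(L)$, which lies in $L^{-1}(V)$ automatically since $L$ annihilates it. This yields the governing formula
\[
\dim L^{-1}(V) = \dim\mathrm{Ker}(L) + \dim\big(V\cap\mathrm{Range}(L)\big).
\]
As $V$ is one-dimensional, the last term equals $1$ if $\bw\in\mathrm{Range}(L)$ and $0$ otherwise, so the whole problem reduces to the parity-dependent facts about $\mathrm{Ker}(L)$ together with the membership $\bw\in\mathrm{Range}(L)$.

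The even case is then immediate. By Lemma~\ref{lem:L}, when $M+1$ is even the matrix $L$ has full rank, so $\mathrm{Ker}(L)=\{0\}$ and $\mathrm{Range}(L)=\R^{M+1}\supseteq V$; the formula gives $0+1=1$. When $M+1$ is odd, Lemma~\ref{lem:L} gives $\dim\mathrm{Ker}(L)=1$ with kernel spanned by $\bv=(1,-1,1,\ldots,-1,1)$, so everything hinges on whether $\bw\in\mathrm{Range}(L)$. Here I would exploit the skew-symmetry of $L$: since $L^T=-L$, we have $\mathrm{Range}(L)=\mathrm{Ker}(L^T)^\perp=\mathrm{Ker}(L)^\perp$, so $\bw\in\mathrm{Range}(L)$ if and only if $\bw\perp\bv$.

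The crux, and the only genuine computation, is the inner product $\bw\cdot\bv=\sum_{j=0}^M\big(j/M-1/2\big)(-1)^j$. Splitting this as $\tfrac{1}{M}\sum_{j=0}^M j(-1)^j-\tfrac{1}{2}\sum_{j=0}^M(-1)^j$ and pairing consecutive terms (using that $M$ is even in this case) yields $\sum_{j=0}^M j(-1)^j=M/2$ and $\sum_{j=0}^M(-1)^j=1$, so $\bw\cdot\bv=\tfrac12-\tfrac12=0$. Thus $\bw\perp\bv$, giving $\bw\in\mathrm{Range}(L)$ and hence $\dim=1+1=2$. I expect this orthogonality identity to be the main, if modest, obstacle, since it is precisely where the parity of $M$ forces the dimension to jump from $1$ to $2$; the remainder is a routine application of rank--nullity and the skew-symmetry of $L$.
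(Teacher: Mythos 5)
Your proof is correct, and it departs from the paper's at exactly the decisive step, in a way worth spelling out. The paper also rests on Lemma~\ref{lem:L} plus rank--nullity, and its even case is essentially identical to yours. In the odd case, however, the paper argues: the first column of $L$ is not orthogonal to $\bw$, hence $\mathrm{Range}(L)\not\subseteq \bw^{\perp}$, and ``consequently'' applying $I-P$ to the $M$-dimensional range loses one dimension. Strictly speaking that inference is incomplete: $(I-P)$ restricted to $\mathrm{Range}(L)$ has kernel $\mathrm{Range}(L)\cap\mathrm{span}(\bw)$, so the dimension drops if and only if $\bw\in\mathrm{Range}(L)$, whereas the condition the paper checks, $\mathrm{Range}(L)\not\subseteq\bw^{\perp}$, says only that $\bw\notin\mathrm{span}(\bv)$ and would be compatible with $(I-P)$ acting injectively on the range (which would give nullity $1$, not $2$). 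Your computation $\bw\cdot\bv=\sum_{j=0}^{M}\bigl(j/M-\tfrac12\bigr)(-1)^{j}=0$ (which checks out; one can also see it from the antisymmetry of $\bw$ and symmetry of $\bv$ about the middle index when $M$ is even), combined with $\mathrm{Range}(L)=\mathrm{Ker}(L^{T})^{\perp}=\bv^{\perp}$ from skew-symmetry, supplies precisely the missing membership $\bw\in\mathrm{Range}(L)$. So your route---identifying $\mathrm{Ker}\,(I-P)L=L^{-1}(\mathrm{span}(\bw))$ and reducing the count to $\dim\mathrm{Ker}(L)$ plus the indicator of $\bw\in\mathrm{Range}(L)$---is slightly longer but fully airtight, while the paper's is terser and leans on a step that is true only because of the orthogonality identity you prove. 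Your citation of the kernel vector $\bv=(1,-1,\ldots,-1,1)$ is legitimate, since it is recorded explicitly at the end of the paper's proof of Lemma~\ref{lem:L}.
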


\begin{proof}
Consider the rank of $(I-P)L$.  If $M+1$ is even, then $L: \R^{M+1} \to \R^{M+1}$ is a surjection.  The matrix $I-P$ projects onto the orthogonal complement of the vector $\bw$, an $M$-dimensional subspace, so in this case the rank of $(I-P)L$ is $M$.   If $M+1$ is odd, then $L$ maps $\R^{M+1}$ to an $M$-dimensional subspace.  Each of the columns of $L$ are contained in this subspace.  Considering just the first column, it is not orthogonal to $\bw$, it follows that $L \R^{M+1}$ is not contained in the orthogonal complement of $\bw$.  Consequently, when we apply $(I-P)$ to $L \R^{M+1}$ the resulting subspace loses one dimension and is thus of dimension $M-1$. The proof then follows from the rank-nullity theorem in both cases.
\end{proof}

\begin{proposition} \label{prop:product_evals}
Let $Q$ be a projection matrix and $S$ be a real square anti-symmetric matrix of the same dimensions.  Then the eigenvalues of $QS$ are contained in $i \R$.
\end{proposition}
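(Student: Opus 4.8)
The plan is to work over $\mathbb{C}^{M+1}$ with the standard Hermitian inner product, writing $\bv^*$ for the conjugate transpose of a vector $\bv$, and to exploit two facts: that an orthogonal projection $Q$ is symmetric and idempotent, so $Q^T=Q$ and $Q^2=Q$; and that $S^T=-S$. Here I read ``projection matrix'' as an orthogonal projection, which is exactly the case arising in the application, where $Q=I-P$ and $P=\bw\bw^T/\|\bw\|^2$ is symmetric. Suppose $\lambda$ is an eigenvalue of $QS$ with eigenvector $\bv\neq 0$, so that $QS\bv=\lambda\bv$. If $\lambda=0$ then $\lambda\in i\mathbb{R}$ trivially, so I may assume $\lambda\neq0$. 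Then $\bv=\lambda^{-1}QS\bv$ lies in the range of $Q$, and since $Q$ projects onto its range we have $Q\bv=\bv$.

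First I would take the Hermitian inner product of the eigenvalue equation with $\bv$, obtaining $\bv^*QS\bv=\lambda\,\bv^*\bv=\lambda\|\bv\|^2$. Because $Q$ is real and symmetric, $\bv^*Q=(Q\bv)^*=\bv^*$, using $Q\bv=\bv$ from the previous step. Hence $\bv^*QS\bv=\bv^*S\bv$, which gives the key identity
\[
\lambda\|\bv\|^2=\bv^*S\bv.
\]
It then remains to show that $\bv^*S\bv$ is purely imaginary, and this is where antisymmetry enters: since $S$ is real, its conjugate transpose is $S^*=S^T=-S$, so
\[
\overline{\bv^*S\bv}=(\bv^*S\bv)^*=\bv^*S^*\bv=-\bv^*S\bv.
\]
A scalar equal to the negative of its own conjugate is purely imaginary, so $\bv^*S\bv\in i\mathbb{R}$. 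As $\|\bv\|^2$ is a positive real number, $\lambda=\bv^*S\bv/\|\bv\|^2\in i\mathbb{R}$, which is the claim.

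The point to be careful about is the role of symmetry of $Q$: the step $\bv^*Q=\bv^*$ fails for a general oblique projection, and the statement itself is then false. For instance, taking $S=\left(\begin{smallmatrix}0&-1\\1&0\end{smallmatrix}\right)$ and the oblique projection $Q=\left(\begin{smallmatrix}1&c\\0&0\end{smallmatrix}\right)$ yields a product $QS$ with eigenvalues $0$ and $c$, which is real and nonzero when $c\neq0$. Thus the only genuine content of the argument is recognizing that ``projection'' must be taken to mean orthogonal projection, as it is throughout the paper; after that the computation above is essentially forced, and I do not anticipate any further obstacle.
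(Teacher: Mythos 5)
Your proof is correct, and it takes a genuinely different route from the paper's. The paper argues via a unitary change of basis: it picks a unitary $U$ with $U^TQU = \begin{pmatrix} I_k & \bm{0} \\ \bm{0} & \bm{0}\end{pmatrix}$, observes that $U^T(QS)U$ is then block upper triangular with anti-symmetric upper-left block $S'$, and reads off the characteristic polynomial $\lambda^{n-k}\det(\lambda I_k - S')$, whose roots lie in $i\R$ because $S'$ is anti-symmetric. Your argument instead works directly with an eigenvector: for $\lambda \neq 0$ you note $\bv = \lambda^{-1}QS\bv$ lies in the range of $Q$, so $Q\bv = \bv$ and hence $\lambda\|\bv\|^2 = \bv^*S\bv$, which is purely imaginary since $S^* = -S$. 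The two proofs exploit the same underlying structure---your Rayleigh-quotient identity is in effect the statement that nonzero eigenvalues of $QS$ are eigenvalues of the compression of $S$ to the range of $Q$, which is exactly the paper's matrix $S'$---but yours is more elementary (no determinants or block triangularization), while the paper's version yields slightly more, namely the full factorization of the characteristic polynomial and the multiplicity $n-k$ of the zero root, which it reuses elsewhere. Your observation that ``projection'' must mean \emph{orthogonal} projection is well taken and is a genuine sharpening of the statement as written: both proofs use this hypothesis (the paper's unitary diagonalization of $Q$ to $\operatorname{diag}(I_k,\bm{0})$ exists only for symmetric idempotents, just as your step $\bv^*Q = \bv^*$ does), your counterexample $Q = \begin{pmatrix}1 & c\\ 0 & 0\end{pmatrix}$, $S = \begin{pmatrix}0 & -1\\ 1 & 0\end{pmatrix}$ with eigenvalue $c \neq 0$ shows the claim is false for oblique projections, and the hypothesis is harmless in context since the paper only applies the proposition to $Q = I - P$ with $P = \bw\bw^T/\|\bw\|^2$ symmetric, and to $Q = I$.
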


\begin{proof}
Since $Q$ is a projection matrix, there exists a change of basis, implemented by a unitary matrix $U$ under which $Q$ has the form 
\[ U^T Q U = \begin{bmatrix}
I_{k}  &  \bm{0} \\
\bm{0} & \bm{0} 
\end{bmatrix}
\] 
where $I_k$ is a $k\times k$ identity matrix, with $k$ equal to the rank of $Q$.  Then using this change of basis, we have 
\[ U^T (QS) U = U^T Q U U^T S U = (U^T Q U)(U ^T S U) = \begin{bmatrix}
I_{k}  &  \bm{0} \\
\bm{0} & \bm{0} 
\end{bmatrix} (U^T S U) = \begin{bmatrix}
	S'  &  S'' \\
	\bm{0} & \bm{0} 
\end{bmatrix}.
\]
where $S'$ is the upper-left $k\times k$ block of $U^T S U$ and $S''$ is the upper-right corner of $U^T S U$ of size $k\times (n-k)$.  Notice that $S'$ is also anti-symmetric, because anti-symmetric matrices are anti-symmetric with respect to any basis.
Now, if $Q$ and $S$ are $n \times n$, then we compuate that 
\begin{equation*}
\det 
\begin{bmatrix}
	\lambda I_k-S'  &  S'' \\
	\bm{0} & \lambda I_{n-k}
\end{bmatrix}
= \lambda^{n-k}\det(\lambda I_k-S')
\end{equation*}
Since $S'$ is anti-symmetric, this polynomial has roots in $i\R$, and these roots are eigenvalues of $QS$.   The remaining eigenvalue is $0$, if $n-k>0$.
\end{proof}

We now apply these results to characterize the eigenvalues of $(I-P)L$ as well as those of $L$.  

\begin{corollary} \label{cor:evals_A}
The non-zero eigenvalues of $(I-P)L$ occur in pairs of the form $\pm i b$ for nonzero $b \in \R$.  Zero is an eigenvalue of $(I-P)L$ with geometric multiplicity one if $M+1$ is even, and geometric multiplicity $2$ if $M+1$ is odd.  The non-zero eigenvalues of $L$ occur in pairs of the form $\pm i b$ for nonzero $b \in \R$.  Zero is an eigenvalue with geometric multiplicity equal to one precisely when $M+1$ is odd.  
\end{corollary}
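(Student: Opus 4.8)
The plan is to derive all four claims directly from the three preceding results together with two elementary linear-algebra facts: that a \emph{real} matrix has its non-real eigenvalues in complex-conjugate pairs, and that the geometric multiplicity of the eigenvalue zero equals the dimension of the kernel. Because all the structural work has already been done in Lemma~\ref{lem:L}, Proposition~\ref{prop:dimker}, and Proposition~\ref{prop:product_evals}, this is genuinely a corollary, so I expect the write-up to be short.

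First I would treat the statements about $L$. As already noted in the text, $L$ is real and anti-symmetric, so all of its eigenvalues are purely imaginary (equivalently, this is the special case $Q=I$ of Proposition~\ref{prop:product_evals}). Since $L$ is a real matrix, its non-real eigenvalues occur in conjugate pairs; combined with the fact that they lie in $i\R$, the conjugate of $ib$ is $-ib$, so the non-zero eigenvalues come in pairs $\pm i b$ with $b\in\R\setminus\{0\}$. For the zero eigenvalue, its geometric multiplicity equals the nullity $(M+1)-\mathrm{rank}(L)$, and Lemma~\ref{lem:L} gives $\mathrm{rank}(L)=M+1$ when $M+1$ is even and $\mathrm{rank}(L)=M$ when $M+1$ is odd. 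Hence the nullity is $0$ in the even case and $1$ in the odd case, so zero is an eigenvalue of geometric multiplicity one precisely when $M+1$ is odd.

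Next I would handle $(I-P)L$. The matrix $I-P$ is the orthogonal projection onto the complement of $\bw$, hence a genuine projection, and $L$ is anti-symmetric, so Proposition~\ref{prop:product_evals} applies verbatim and places every eigenvalue of $(I-P)L$ in $i\R$. Since $(I-P)L$ is a product of real matrices it is itself real, and the same conjugate-pairing argument as above shows that its non-zero eigenvalues occur in pairs $\pm i b$. Finally, the geometric multiplicity of the zero eigenvalue is exactly $\dim\mathrm{Ker}\,(I-P)L$, which Proposition~\ref{prop:dimker} evaluates as $2$ when $M+1$ is odd and $1$ when $M+1$ is even, completing the argument.

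Since every ingredient is already in place, I do not anticipate a real obstacle; the only points requiring a moment's care are that $(I-P)L$ is not itself anti-symmetric, so one must invoke Proposition~\ref{prop:product_evals} rather than the naive ``anti-symmetric implies imaginary spectrum'' shortcut in order to pin its spectrum to $i\R$, and that one must keep geometric and algebraic multiplicity distinct, since only the geometric multiplicity---the kernel dimension---is what the earlier propositions actually control.
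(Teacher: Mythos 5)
Your proposal is correct and follows essentially the same route as the paper: realness of the matrices gives conjugate pairing of roots of the characteristic polynomial, Proposition~\ref{prop:product_evals} pins the spectrum of $(I-P)L$ (and of $L$, viewed as $IL$ with $Q=I$) to $i\R$, and the zero-eigenvalue geometric multiplicities come from Proposition~\ref{prop:dimker} and Lemma~\ref{lem:L}. The only cosmetic difference is that you make explicit the rank--nullity step for $L$ that the paper leaves implicit.
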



\begin{proof} 
We note that $(I-P)L$ has all real entries, and so the characteristic polynomial $\det(\lambda I - (I-P)L)$ has real coefficients.  It therefore follows that if $z$ is a root of this polynomial, which is equivalent to being an eigenvalue of $(I-P)L$, then $\overline z$ is also a root of this polynomial.  By the preceding proposition, the eigenvalues of $(I-P)L$ are contained in $i \R$.  The non-zero ones therefore occur in pairs of the form $\pm i b$ for non-zero $b \in \R$.  The dimension of the eigenspace of the eigenvalue zero  follows from Proposition \ref{prop:dimker}.  In the same way we apply the Proposition to $L=IL$, with $I$ the identity matrix of the same dimensions as $L$, noting that this is a projection matrix, so the proposition applies in the same way. 
\end{proof}

We now determine a basis for the kernel of $(I-P)L$ in case its dimensions $(M+1) \times (M+1)$ have $M+1$ odd.  
\begin{proposition}
\label{prop:Jordanchain-2}
Assume that $(I-P)L$ is $(M+1)\times (M+1)$ for $M+1$ odd.  Then the vectors $\bv_o$ and $\bv_e$ with ones in the odd and even components, respectively, and all other entries equal to zero, constitute a basis of the kernel of $(I-P)L$. 
\end{proposition}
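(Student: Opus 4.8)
The plan is to avoid working with $(I-P)L$ directly on $\bv_o$ and $\bv_e$ and instead pass to a more convenient basis of the same subspace, then invoke the dimension count already in hand. By Proposition~\ref{prop:dimker}, the kernel of $(I-P)L$ has dimension exactly $2$ when $M+1$ is odd, so it will be enough to produce two linearly independent kernel vectors spanning the same space as $\{\bv_o,\bv_e\}$.

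The key observation is the pair of identities
\[
\bv_e+\bv_o=\bm{1},\qquad \bv_e-\bv_o=\bc,
\]
where $\bm{1}=(1,1,\ldots,1)$ is the all-ones vector and $\bc=(1,-1,1,\ldots,1)$ is the alternating vector with $\bc_j=(-1)^j$ (the one appearing in Lemma~\ref{lem:L}). Equivalently $\bv_e=\tfrac12(\bm{1}+\bc)$ and $\bv_o=\tfrac12(\bm{1}-\bc)$, so $\mathrm{span}\{\bv_o,\bv_e\}=\mathrm{span}\{\bm{1},\bc\}$. Because the kernel is a linear subspace, it then suffices to show that both $\bm{1}$ and $\bc$ lie in $\mathrm{Ker}\,(I-P)L$.

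For $\bc$ this is immediate: Lemma~\ref{lem:L} identifies $\bc$ as a spanning vector of $\mathrm{Ker}\,L$ in the odd case, so $L\bc=0$ and hence $(I-P)L\bc=0$. For $\bm{1}$ I would compute the action of $L$ directly from \eqref{eq:defn_Lmatrix}: the $k$-th entry of $L\bm{1}$ equals $\sum_{j<k}1-\sum_{j>k}1=k-(M-k)=2k-M$. Comparing with \eqref{eq:w_discrete}, whose $k$-th entry is $k/M-1/2=(2k-M)/(2M)$, one reads off that $L\bm{1}=2M\,\bw$. Since $P=\bw\bw^{T}/\|\bw\|^2$ is the orthogonal projection onto $\mathrm{span}\{\bw\}$, it fixes every scalar multiple of $\bw$; thus $P(L\bm{1})=L\bm{1}$ and $(I-P)L\bm{1}=0$.

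It then remains to note that $\bv_o$ and $\bv_e$ are linearly independent, which is clear since their supports are the disjoint nonempty sets of odd and even indices; being two linearly independent elements of the two-dimensional kernel, they form a basis. The only genuine computation is the identity $L\bm{1}=2M\,\bw$, and I expect that verification --- keeping the index parity and the normalization of $\bw$ straight --- to be the one place requiring care; the rest is the algebraic bookkeeping above together with the dimension count from Proposition~\ref{prop:dimker}.
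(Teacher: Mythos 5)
Your proof is correct, but it routes through a different decomposition than the paper's. The paper verifies membership in the kernel for $\bv_o$ and $\bv_e$ directly: it computes $L\bv_o=L\bv_e=-\frac M2\,\bm{1}+(0,1,\ldots,M)^T$, splits off the constant part (killed by $P$ since its MCA is $\tfrac12$), and checks that $P$ fixes what remains, so $(I-P)L\bv=0$ for both vectors at once. You instead pass to the equivalent basis $\bm{1}=\bv_e+\bv_o$, $\bc=\bv_e-\bv_o$ of $\mathrm{span}\{\bv_o,\bv_e\}$, which splits the work asymmetrically: membership of $\bc$ comes for free from $L\bc=0$, while $\bm{1}$ requires only the single identity $L\bm{1}=2M\bw$, which is correct (indeed it is the sum of the paper's two computations, since $(0,1,\ldots,M)^T-\frac M2\,\bm{1}=M\bw$; it is also the correct-sign version of the identity $L\bv_1=-2M\bw$ asserted in the proof of Proposition~\ref{prop:Jordanchain-1}, where the paper has a harmless sign slip). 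What your route buys is economy and structural clarity --- half the kernel is explained as $\mathrm{Ker}\,L$ itself and the other half as the direction that $L$ maps into $\mathrm{span}\{\bw\}=\mathrm{ran}\,P$; what the paper's route buys is self-containedness, since it never invokes the identification of $\mathrm{Ker}\,L$. On that point, one small citation caveat: the fact that $\bc$ spans $\mathrm{Ker}\,L$ appears only as a remark at the end of the \emph{proof} of Lemma~\ref{lem:L}, not in its statement (which records only the rank); since $L\bc=0$ is in any case a one-line parity check from \eqref{eq:defn_Lmatrix}, this is cosmetic rather than a gap. The remaining steps --- linear independence via disjoint supports and the dimension count from Proposition~\ref{prop:dimker} --- coincide with the paper's.
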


\begin{proof}
Let $\bv$ be either of the solutions in the statement of the proposition.
We compute 
\[
L \bv = -\frac M 2 
\begin{bmatrix}
1 \\ 1 \\ \vdots \\ 1 \\ 1
\end{bmatrix}
+
\begin{bmatrix}
0 \\ 1 \\ 2 \\ \vdots \\ M 
\end{bmatrix}.
\]
Since the $\mca$ of the first vector is equal to $1/2$, it is orthogonal to $\bw$, and so left multiplication with $P$ yields the zero vector.  Let $\bm{u}$ be the second vector.  

Then 
\[
P\bm{u} = 
\begin{bmatrix}
0 \\ 1 \\ 2 \\ \vdots \\ M 
\end{bmatrix}
- \frac M 2 
\begin{bmatrix}
1 \\ 1 \\ \vdots \\ 1 \\ 1
\end{bmatrix}
\]
The conclusion is $(I-P)L\bv =L\bv - PL\bv=(0,0,...,0)$.
Since these $\bv_o$ and $\bv_e$ are linearly independent and since $\dim\mathrm{Ker}\, (I-P)L=2$ by Proposition~\ref{prop:dimker}, they constitute a basis.
\end{proof}

We now determine a basis for the kernel of $(I-P)L$ in case its dimensions $(M+1) \times (M+1)$ have $M+1$ even.  
\begin{proposition}
\label{prop:Jordanchain-1}
Assume that $M+1$ is even.  Define the vector $\bv_2$ $=$ \\ $(2,0,2,0,...,2,0)$.
Then $\bv_2$ solves $(I-P)L\bv_2 = (1,1,1,...,1)$ $=\bv_1$, and $\bv_1$ is a basis for the kernel of $(I-P)L$.   That is, $\bv_2$ and $\bv_1$ consitute a Jordan chain for the eigenvalue zero. 
\end{proposition}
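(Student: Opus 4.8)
The plan is to carry out every computation inside the two–dimensional subspace spanned by $\bv_1=(1,\dots,1)$ and $\bm{u}=(0,1,2,\dots,M)$, which is precisely the plane in which $\bw$ lives and on which the projection $P$ acts transparently; this avoids ever computing $\|\bw\|^2$ or inner products against $\bw$ directly. First I would record two projection identities. Since $\mca(\bv_1)=\tfrac12$, the vector $\bv_1$ is orthogonal to $\bw$, so $P\bv_1=0$ and hence $(I-P)\bv_1=\bv_1$. Writing the components of $\bw$ as $w_j=j/M-\tfrac12$ shows that $\bm{u}=M\bw+\tfrac{M}{2}\bv_1$; because $P\bw=\bw$, this yields $(I-P)\bm{u}=\tfrac{M}{2}\bv_1$, the same relation already computed in Proposition~\ref{prop:Jordanchain-2} and valid irrespective of the parity of $M$.

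Next I would compute $L\bv_2$ directly from the definition of $L$ in \eqref{eq:discrete-gradient}. For $M+1$ even the vector $\bv_2$ carries the value $2$ in every even-indexed slot and $0$ in every odd-indexed slot, and $M$ is odd. The $k$-th entry of $L\bv_2$ is the sum of the entries strictly below index $k$ minus the sum of those strictly above; evaluating the relevant partial sums of the even-indexed $2$'s gives $(L\bv_2)_k=2k+1-M$ for every $k$, i.e. $L\bv_2=2\bm{u}+(1-M)\bv_1$. Applying $(I-P)$ and invoking the two identities above then gives $(I-P)L\bv_2=2\cdot\tfrac{M}{2}\bv_1+(1-M)\bv_1=\bv_1$, which is the first assertion of the proposition.

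It remains to identify the kernel and assemble the Jordan chain. An identical computation gives $L\bv_1=2\bm{u}-M\bv_1$, whence $(I-P)L\bv_1=M\bv_1-M\bv_1=0$, so $\bv_1\in\mathrm{Ker}\,(I-P)L$. By Proposition~\ref{prop:dimker} this kernel is one–dimensional when $M+1$ is even, so $\bv_1$ is a basis for it. Finally the pair forms a length-two Jordan chain for the eigenvalue $0$: we have shown $(I-P)L\bv_2=\bv_1\neq 0$ and $(I-P)L\bv_1=0$, so $\big((I-P)L\big)^2\bv_2=0$ while $(I-P)L\bv_2\neq 0$, which exhibits $\bv_2$ as a rank-two generalized eigenvector with associated eigenvector $\bv_1$. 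The only step requiring genuine care is the partial-sum evaluation of $L\bv_2$, where the oddness of $M$ and the $k=0$ boundary term must be tracked; everything after that is linear bookkeeping, made immediate by the decomposition $\bm{u}=M\bw+\tfrac{M}{2}\bv_1$.
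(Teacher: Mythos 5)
Your proposal is correct and follows essentially the same route as the paper: a direct computation of $L\bv_2$ and $L\bv_1$ (your componentwise formula $(L\bv_2)_k = 2k+1-M$ agrees with the paper's $L\bv_2 = 2M\bw + \bv_1$, since $\bm{u} = M\bw + \tfrac{M}{2}\bv_1$), followed by killing the $\bw$-component with $I-P$ using the orthogonality $\langle \bv_1, \bw\rangle = 0$. The only difference is cosmetic bookkeeping in the plane $\mathrm{span}\{\bv_1,\bm{u}\}$, plus your explicit appeal to Proposition~\ref{prop:dimker} for the basis claim, a step the paper leaves implicit.
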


\begin{proof}
Let $\bv_2$ be as in the statement of the proposition and compute 
\[
L \bv_2 = -(M-1) 
\begin{bmatrix}
1 \\ 1 \\ \vdots \\ 1 \\ 1
\end{bmatrix}
+
2
\begin{bmatrix}
0 \\ 1 \\ 2 \\ \vdots \\ M
\end{bmatrix} = 2M \bw + \bv_1.
\]
Then, since $\mca(\bv_1) = 1/2$ it is in the orthogonal complement of the span of $\bw$.  Consequently, since $I-P$ projects onto the orthogonal complement of $\bw$, $(I-P)L\bv_2= \bv_1$. We then also compute that 
\[ L \bv_1 = -2M \bw \implies (I-P)L\bv_1 = 0.\]
\end{proof}

The fundamental theorem for linear systems \cite{perko2001} gives the explicit form of the solution to \eqref{eq:ad_discrete0}.  We first give the case when the initial data satisfies the MCA constraint with equality.

\begin{theorem} \label{th:discrete_sol_mca_equal} Assume that the initial data $\bmy_0 \in \R^{M+1}$ satisfies \eqref{eq:discrete_setup} and the MCA constraint \eqref{eq:discrete-mca} is an equality.  Then the solution to the adaptive dynamics \eqref{eq:ad_discrete0} with initial data $\bmy_0$ is 
\[ \bmy(t) = Q B Q^{-1} \bmy_0.\]
The matrix $B$ is a real block-diagonal square matrix, and $Q$ is real and invertible. These matrices both have dimensions $(M+1) \times (M+1)$.  The columns of $Q$ are the generalized eigenvectors of $(I-P)L$ ordered in the following way:
Let $m$ be the algebraic multiplicity of the eigenvalue $\lambda=0$, and let $\bm{v}_1,\bv_2,...,\bv_m$ be a set of generalized eigenvectors for $\lambda=0$. 
Non-zero eigenvalues occur in conjugate pairs $\lambda=i\beta_j$, $\bar{\lambda}=-i\beta_j$, for $j=m+1,m+2,...,\ell$. 
If $\bm{u}_j+i\bm{w}_j$ is a generalized eigenvector to $\lambda=i\beta_j$, where $\bm{u}_j$ and $\bm{w}_j$ are the real and imaginary part of the generalized eigenvector, then a basis for $\R^{M+1}$ and the columns of $Q$ are given by
\[
\bv_1,\bv_2,...,\bv_m, \bm{u}_{m+1},\bm{w}_{m+1},...,\bm{u}_\ell,\bm{w}_\ell.
\]
The first generalized eigenvectors are given by Proposition~\ref{prop:Jordanchain-2} if $M+1$ is odd or Proposition~\ref{prop:Jordanchain-1} if $M+1$ is even.
Correspondingly, if $M+1$ is odd then $m\geq 3$ and is odd, and if $M+1$ is even then $m\geq 2$ and is even.
The matrix $B$ consists of blocks along the diagonal corresponding to the eigenvalues, ordered like the above basis.  Each block $B_j$ corresponding to the zero eigenvalue, $0$, is of the form 
\begin{equation*}
\begin{bmatrix} 
1 & t & \frac{t^2}{2} & \dots & \frac{t^{k-1}}{(k-1)!} \\
 & 1 & t & \dots & \\ 
 &   & 1 & \ddots & \\
 &   &   & \ddots & t\\
 & & & & 1 
\end{bmatrix}
\end{equation*}
for some $k \geq 1$, and such that the sizes of these blocks add up to the algebraic multiplicity of $\lambda=0$. All components below the diagonal are zero. For an eigenvalue $i \beta_j \neq 0$, let 
\[ R_j = \begin{bmatrix} \cos(\beta_j t) & -\sin(\beta_j t) \\ \sin(\beta_j t) & \cos(\beta_j t) \end{bmatrix}. \]
A block $B_j$ corresponding to $\pm i \beta_j$ is of the form
\[ \begin{bmatrix} R_j & tR_j & \frac{t^2}{2}R_j & \ldots & \frac{t^{k-1}}{(k-1)!}R_j  \\ & R_j & tR_j & \ldots &  \\ & & \ddots & \ddots & \\ & & & R_j & tR_j \\ & & & & R_j 
\end{bmatrix} \]
for some $k \geq 1$.  All components below the diagonal blocks are zero. 
\end{theorem}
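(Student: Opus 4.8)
The plan is to use the hypothesis $\mca(\bmy_0)=1/2$ to collapse the Heaviside-switched dynamics \eqref{eq:ad_discrete0} into a single constant-coefficient linear system, and then to read off the solution as a matrix exponential expressed in real Jordan form. The first and most delicate step is to verify that the constraint surface is invariant. Since $\bw\cdot\bmy_0=0$ we have $H(\bw\cdot\bmy_0)=1$ (recall $H(0)=1$), so initially $A=(I-P)L$. Because $I-P$ projects onto the orthogonal complement of $\bw$, the vector $(I-P)L\bmy$ is orthogonal to $\bw$ for every $\bmy$, and therefore
\[
\frac{d}{dt}\,(\bw\cdot\bmy) = \bw\cdot(I-P)L\bmy = 0.
\]
Hence $\bw\cdot\bmy(t)\equiv\bw\cdot\bmy_0=0$, the Heaviside factor remains equal to $1$ for all $t$, and the dynamics is genuinely the autonomous linear system $\dot\bmy=(I-P)L\bmy$ with $\bmy(0)=\bmy_0$. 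I expect this invariance argument to be the crux, since it is precisely what licenses replacing the switched vector field by a single linear operator; once it is in place, the rest is classical linear ODE theory.

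Next I would invoke the fundamental theorem for linear systems \cite{perko2001}, which yields the unique solution $\bmy(t)=e^{t(I-P)L}\bmy_0$. As $(I-P)L$ is a real matrix, there is a real invertible $Q$ bringing it to real Jordan canonical form, $Q^{-1}(I-P)L\,Q=J$; the columns of $Q$ are the generalized eigenvectors, with each conjugate pair $\pm i\beta_j$ contributing the real and imaginary parts $\bm{u}_j,\bm{w}_j$ of a complex generalized eigenvector, ordered exactly as in the statement. Then $e^{t(I-P)L}=Q\,e^{tJ}Q^{-1}$, so it remains only to identify $B:=e^{tJ}$ and confirm its block structure.

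The block computation splits by eigenvalue. For a zero-eigenvalue Jordan block of size $k$ the block is nilpotent, so $e^{tJ_0}=\sum_{j\ge 0}\tfrac{t^j}{j!}J_0^{\,j}$ terminates after $k$ terms and gives exactly the upper-triangular Toeplitz matrix with entries $t^{j}/j!$ displayed in the statement. For a conjugate pair $\pm i\beta_j$, the real Jordan block decomposes as a block-diagonal part with $2\times2$ generator $D=\left(\begin{smallmatrix}0&-\beta_j\\ \beta_j&0\end{smallmatrix}\right)$ plus a nilpotent part carrying $2\times2$ identities on the superdiagonal. These two parts commute, so the exponential factors; since $e^{tD}=R_j$ is the stated rotation matrix and the exponential of the nilpotent part places $\tfrac{t^{p}}{p!}I_2$ on the $p$-th block superdiagonal, the product has $(a,b)$ block equal to $\tfrac{t^{b-a}}{(b-a)!}R_j$ for $a\le b$, which is precisely the $R_j$-Toeplitz structure claimed for $B_j$.

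Finally I would record the facts that pin down $m$ and the first generalized eigenvectors. By Corollary~\ref{cor:evals_A} all eigenvalues lie in $i\R$ and the nonzero ones occur in conjugate pairs, so the total algebraic multiplicity of the nonzero eigenvalues is even and $m$, the algebraic multiplicity of $0$, has the same parity as $M+1$. Combining this parity with the geometric multiplicities from Proposition~\ref{prop:dimker} gives $m\ge 3$ and odd when $M+1$ is odd, and $m\ge 2$ and even when $M+1$ is even; the explicit kernel bases and the length-two Jordan chain furnishing the first columns of $Q$ come from Propositions~\ref{prop:Jordanchain-2} and \ref{prop:Jordanchain-1}, respectively. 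Assembling these pieces yields $\bmy(t)=QBQ^{-1}\bmy_0$ in the asserted form.
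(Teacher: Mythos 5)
Your proposal is correct and takes essentially the same route as the paper's proof: apply the fundamental theorem for linear systems together with the real Jordan form of $(I-P)L$, and obtain the multiplicity and parity claims for $m$ from Proposition~\ref{prop:dimker}, Corollary~\ref{cor:evals_A}, and Propositions~\ref{prop:Jordanchain-2} and~\ref{prop:Jordanchain-1}. The only difference is that you spell out two steps the paper leaves implicit---the invariance of the constraint surface $\bw\cdot\bmy=0$ (so that the Heaviside factor remains $1$ and the switched system collapses to the single linear system $\dot\bmy=(I-P)L\bmy$) and the explicit exponentials of the real Jordan blocks---both of which are correct and, in the case of the invariance argument, arguably strengthen the paper's presentation.
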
 

\begin{proof} 
The form of the solution follows immediately from the fundamental theorem for linear systems and the real Jordan form \cite{perko2001} of the matrix $(I-P)L$, noting that Propositions~\ref{prop:dimker}--\ref{prop:Jordanchain-2} apply to this matrix.  We note that the algebraic multiplicity is greater than or equal to the geometric multiplicity.  Thus when $M+1$ is even, since the blocks corresponding to nonzero eigenvalues are all even-dimensional, the block corresponding to the zero eigenvalue must also be even-dimensional.  Since the geometric multiplicity of the zero eigenvalue is one, this shows that its algebraic multiplicity is at least two.  When $M+1$ is odd, the geometric multiplicity of the zero eigenvalue is two.  Since the blocks corresponding to the non-zero eigenvalues are all even-dimensional, the block corresponding to the zero eigenvalue must be odd dimensional.  Therefore the algebraic multiplicity of the zero eigenvalue is at least 3.  
\end{proof}

The same arguments, together with Lemma \ref{lem:L} and Corollary \ref{cor:evals_A} gives the solution in case the initial data has MCA strictly less than $1/2$.  

\begin{theorem} \label{th:discrete_solution_mca_less}
Assume that the initial data $\bmy_0 \in \R^{M+1}$ satisfies \eqref{eq:discrete_setup} and the MCA constraint \eqref{eq:discrete-mca} is a strict inequality.  Then the solution to the adaptive dynamics \eqref{eq:ad_discrete0} with initial data $\bmy_0$ is 
\[ \bmy(t) = Q B Q^{-1} \bmy_0.\]
If $M+1$ is even, then  the $(M+1) \times (M+1)$ invertible matrix $Q$ has columns given by the real and imaginary parts of the eigenvectors and generalized eigenvectors of $L$.  The matrix $B$ consists of blocks along the diagonal corresponding to the eigenvalues of $L$.
These blocks are of the same type as in Theorem \ref{th:discrete_sol_mca_equal}.
If $M+1$ is odd, then one block is $1$, that is a $1\times 1$ block corresponding to the zero eigenvalue, which has algebraic and geometric multiplicity equal to one. The remaining blocks correspond to the eigenvalues $\pm i\beta_j$ for real $\beta_j\neq 0$. 
\end{theorem}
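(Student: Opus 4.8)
The plan is to recognize that under the hypothesis $\mca(\bmy_0)<\tfrac12$ the apparently nonlinear system \eqref{eq:ad_discrete0} collapses to a constant-coefficient linear system, and then to read the solution off the real Jordan form of the governing matrix, mirroring the proof of Theorem~\ref{th:discrete_sol_mca_equal} but with $L$ replacing $(I-P)L$. First I would note that $\mca(\bmy_0)<\tfrac12$ is equivalent to $\bw\cdot\bmy_0<0$, so that $H(\bw\cdot\bmy_0)=0$ and $A\bmy=L\bmy$ initially. On any time interval on which $\bw\cdot\bmy(t)<0$, the Heaviside factor stays zero, so the dynamics is the autonomous linear system $\dot\bmy=L\bmy$, whose solution is $\bmy(t)=e^{Lt}\bmy_0$.

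Next I would invoke the fundamental theorem for linear systems together with the real Jordan form \cite{perko2001} to write $e^{Lt}=QBQ^{-1}$, so that $\bmy(t)=QBQ^{-1}\bmy_0$, with the columns of $Q$ furnished by the real and imaginary parts of the (generalized) eigenvectors of $L$ and $B$ block-diagonal. The decisive structural observation is that $L$ is real and skew-symmetric, hence normal and therefore diagonalizable over $\mathbb{C}$; consequently every eigenvalue of $L$ is semisimple, its algebraic and geometric multiplicities coincide, and the real Jordan form carries no nilpotent part. This is precisely where the present case departs qualitatively from Theorem~\ref{th:discrete_sol_mca_equal}: there $(I-P)L$ fails to be normal and Propositions~\ref{prop:Jordanchain-2}--\ref{prop:Jordanchain-1} produce genuine Jordan chains with polynomial-in-$t$ entries, whereas here every block occurs with $k=1$.

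I would then pin down the blocks using Lemma~\ref{lem:L} and Corollary~\ref{cor:evals_A}. By Corollary~\ref{cor:evals_A} the nonzero eigenvalues of $L$ occur in conjugate pairs $\pm i\beta_j$, each contributing a single $2\times2$ rotation block $R_j$ (with no higher powers of $t$ by semisimplicity). By Lemma~\ref{lem:L}, $L$ has full rank when $M+1$ is even, so it is invertible and there is no zero block; when $M+1$ is odd the rank is $M$, so zero is an eigenvalue of geometric multiplicity one, and by normality also of algebraic multiplicity one, yielding a single $1\times1$ block equal to $1$. Assembling these blocks in the order dictated by the columns of $Q$ gives exactly the stated matrix $B$.

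The step I expect to be the main obstacle is making precise the interval of validity, i.e. justifying that the Heaviside factor really does stay zero. Paralleling Lemma~\ref{lem:increase-mca}, the discrete analog of the integration by parts \eqref{eq:ibp-grad} gives $\tfrac{d}{dt}(\bw\cdot\bmy)=\bw^{T}L\bmy\ge 0$ whenever $\bmy\ge 0$, so $\bw\cdot\bmy$ cannot decrease while the trajectory remains in the nonnegative orthant; however, since $e^{Lt}$ is merely orthogonal, the trajectory may leave that orthant and this monotonicity can fail. The clean resolution is to state the solution on the maximal interval $[0,T)$ on which $\bw\cdot\bmy(t)<0$, and to switch to the constrained field $(I-P)L$ at the first crossing exactly as in the proof of Theorem~\ref{Lp-theorem}; the remaining algebraic verifications are routine once semisimplicity of $L$ is in hand.
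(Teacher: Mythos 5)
Your proof is correct, but it reaches the key spectral facts by a genuinely different route than the paper. The paper's proof is almost entirely an explicit computation of the characteristic polynomial of $L$ by induction on the dimension: after a row-reduction trick it derives the recursion $\det(L-\lambda I)=(-1-\lambda)\det(L_{n}-\lambda I_{n})+(-1)^{n+2}(\lambda-1)^{n}$ and establishes the closed forms $\sum_{k}\binom{n+1}{2k}\lambda^{2k}$ when $n+1$ is even and $-\lambda\sum_{k}\binom{n+1}{2k+1}\lambda^{2k}$ when $n+1$ is odd, from which it reads off that $0$ is not an eigenvalue in the even case and has algebraic multiplicity exactly one in the odd case, with all remaining roots purely imaginary in conjugate pairs. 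You instead get semisimplicity for free from skew-symmetry ($LL^{T}=-L^{2}=L^{T}L$, so $L$ is normal and diagonalizable over $\mathbb{C}$), and then combine the rank count of Lemma~\ref{lem:L} with Corollary~\ref{cor:evals_A}; normality upgrades geometric multiplicity one to algebraic multiplicity one, which is exactly the fact the paper extracts from the polynomial. Your route is shorter and in one respect strictly stronger: it shows every block occurs with $k=1$, i.e.\ no polynomial-in-$t$ factors appear at all in $B$ for this case, whereas the paper's statement only says the blocks are ``of the same type'' as in Theorem~\ref{th:discrete_sol_mca_equal}. What the paper's computation buys in exchange is the explicit characteristic polynomial --- in particular the equation whose roots are the frequencies $\pm i\beta_{j}$ --- which your argument does not produce. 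Finally, your closing paragraph is more careful than the paper's own proof on the interval of validity: the paper is silent about when $H(\bw\cdot\bmy(t))$ remains zero, tacitly reading the theorem as describing the flow while the strict inequality persists, and your proposal to work on the maximal interval with $\bw\cdot\bmy(t)<0$ and hand over to $(I-P)L$ at the first crossing (as in Theorem~\ref{Lp-theorem}) is consistent with the paper's own observations in Proposition~\ref{prop:growth_mca_less_12} and the switching behaviour described in Section~\ref{sec:discussion}; your caveat that the discrete monotonicity of $\bw\cdot\bmy$ requires $\bmy\geq 0$, which the orthogonal flow $e^{Lt}$ need not preserve, is likewise well taken.
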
 

\begin{proof}
We compute the characteristic polynomial of $L$ using induction on the size of $L$.
Let $L$ be as in~(\ref{eq:defn_Lmatrix}) of size $(n+1)\times (n+1)$ for some positive integer $n$ and let $L_n$ be the same matrix but of size $n\times n$.  We begin by subtracting the $(j+1)^{st}$ row from the $j^{th}$ row starting from the first row and continuing to the last row, keeping the last row unchanged.  Then we calculate the determinant by expanding along the first column obtaining
\beq 
&\det(L-\lambda I)=
\det 
\begin{bmatrix}
-\lambda & -1 & -1 & ... & -1\\
1 & -\lambda  & -1 & ... & -1\\
1 & 1  & -\lambda & ... & -1\\
\vdots & & & \ddots & \vdots\\
1 & 1  & 1  & ... & -\lambda 
\end{bmatrix} \nn
\eeq
\beq
&= 
\det
\begin{bmatrix}
-1-\lambda & \lambda-1 & 0 & ... & 0\\
0 & -1-\lambda  & \lambda-1 & \ddots &\vdots \\
0 & 0  & \ddots  & \ddots & 0\\
\vdots & & & -1-\lambda & \lambda-1\\
1 & 1  & \dots & 1 & -\lambda 
\end{bmatrix} \nn \\
&=(-1-\lambda) \det (L_{n}-\lambda I_{n})
+ (-1)^{n+2}\det
\begin{bmatrix}
 \lambda-1 & 0 & ... & 0\\
 -1-\lambda  & \lambda-1 & ... & 0\\
 0  & \ddots  & \ddots & \vdots\\
 \vdots & & -1-\lambda & \lambda-1
\end{bmatrix} \nn \\
&=(-1-\lambda) \det (L_{n}-\lambda I_{n})
+ (-1)^{n+2}(-1+\lambda)^n.  \label{eq:clever_det}
\eeq 

We claim that 
\beq \det(L-\lambda I) = \sum_{k=0} ^{\frac{n+1}{2}} {n+1 \choose 2k} \lambda^{2k}, \textrm{ if $n+1$ is even}, \label{eq:ind_n+1_even} \eeq 
and 
\beq \det(L-\lambda I) = - \lambda \sum_{k=0} ^{ \frac n 2} {n+1 \choose 2k+1} \lambda^{2k}, \textrm{ if $n+1$ is odd.} \label{eq:ind_n+1_odd} \eeq 
Once these expressions are established, it is immediately apparent that $0$ is not an eigenvalue of $\det(L-\lambda I)$ when $n+1$ is even, and it is an eigenvalue of algebraic multiplicity one when $n+1$ is odd. It is also apparent that all other eigenvalues are purely imaginary and occur in conjugate pairs.  So, to complete the proof, we demonstrate \eqref{eq:ind_n+1_even} and \eqref{eq:ind_n+1_odd}.

We calculate directly that 
\begin{equation*}
\det
\begin{bmatrix}
-\lambda & -1\\
1 & -\lambda
\end{bmatrix}
=\lambda^2 +1,\quad
\det
\begin{bmatrix}
-\lambda & -1 & -1\\
1 & -\lambda & -1\\
1 & 1 & -\lambda
\end{bmatrix}
=-\lambda^3 -3\lambda.
\end{equation*}
This demonstrates the base cases.  Using \eqref{eq:clever_det} and the induction assumption for $n$ odd, we compute $\det(L-\lambda I)$ in dimension $n+1 \times n+1$ is 
\[(1+\lambda) \sum_{k=0} ^{\frac{n-1}{2}} {n \choose 2k+1} \lambda^{2k+1} + (-1)^n \sum_{j=0} ^n {n \choose j} (-1)^{n-j} \lambda^j\] 
\[ = \sum_{k=0} ^{\frac{n-1}{2}} {n \choose 2k+1} \lambda^{2k+1} + \sum_{k=0} ^{\frac{n-1}{2}} {n \choose 2k+1} \lambda^{2k+2} + \sum_{j=0} ^n {n \choose j} (-1)^{j} \lambda^j.\]
We split the sum 
\[\sum_{j=0} ^n {n \choose j} (-1)^{j} \lambda^j = - \sum_{k=0} ^{\frac{n-1}{2}} {n \choose 2k+1} \lambda^{2k+1} + \sum_{k=0} ^{\frac{n-1}{2}} {n \choose 2k} \lambda^{2k}. \]
This shows that the determinant simplifies to 
\[ \sum_{k=0} ^{\frac{n-1}{2}} {n \choose 2k+1} \lambda^{2k+2} + \sum_{k=0} ^{\frac{n-1}{2}} {n \choose 2k} \lambda^{2k}. \]
We re-index the first sum by setting $j=k+1$ and obtain 
\[ \sum_{j=1} ^{\frac{n+1}{2}} {n \choose 2j-1} \lambda^{2j} + \sum_{k=0} ^{\frac{n-1}{2}} {n \choose 2k} \lambda^{2k} = 1 + \lambda^{n+1} + \sum_{k=0} ^{\frac{n-1}{2}} \left[ {n \choose 2k} + {n\choose 2k-1} \right] \lambda^{2k}\]
\[ = 1 + \lambda^{n+1} + \sum_{k=0} ^{\frac{n-1}{2}} {n+1 \choose 2k} \lambda ^{2k} = \sum_{k=0} ^{\frac{n+1}{2}} {n+1 \choose 2k} \lambda^{2k}.\] 
Since $n$ is odd, $n+1$ is even, and this is indeed \eqref{eq:ind_n+1_even}. 

Next, assume our claim holds for $n$ even, and we calculate using \eqref{eq:clever_det} and the induction assumption: 
\[ \det(L-\lambda I) = -(1+\lambda) \sum_{k=0} ^{\frac n 2} {n \choose 2k} \lambda^{2k} + (-1)^n \sum_{j=0} ^n {n \choose j} (-1)^{n-j} \lambda^j \] 
\[ = - \sum_{k=0} ^{\frac n 2} {n \choose 2 k} \lambda^{2k} - \lambda \sum_{k=0} ^{\frac n 2} {n \choose 2 k} \lambda^{2k} + \sum_{j=0} ^n {n \choose j} (-1)^j \lambda^j.\]
Splitting the sum in $j$, the first summand cancels resulting in the simplification to 
\[ - \lambda \sum_{k=0} ^{\frac n 2} {n \choose 2 k} \lambda^{2k} - \sum_{j=0} ^{n/2-1} {n \choose 2j+1} \lambda^{2j+1} \]
\[ = - \lambda - \lambda^{n+1} - \lambda \sum_{k=1} ^{n/2-1} \left[{n \choose 2k} + {n \choose 2k+1} \right] \lambda^{2k} \]
\[ = - \lambda - \lambda^{n+1} - \lambda \sum_{k=1} ^{n/2-1} {n+1 \choose 2k+1} \lambda^{2k} = - \lambda \sum_{k=0} ^{n/2} {n+1 \choose 2k+1} \lambda^{2k}.\]
This is indeed \eqref{eq:ind_n+1_odd}. 
\end{proof}

It follows from the preceding two theorems that the solution to \eqref{eq:ad_discrete0} is of the form 
\[ 
\bc_0(t) + \sum \ba_k(t) \cos(\beta_k t) + \bb_k (t) \sin(\beta_k t).
\]
Here, $\bc_0(t)$, $\ba_k(t)$, and $\bb_k(t)$ are polynomials of the variable $t$ with vector-valued coefficients, which are linear combinations of the generalized eigenvectors of the eigenvalues 0 and $\pm i\beta_k$, respectively. In case the MCA constraint is satisfied with a strict inequality, and $M+1$ is even, then $\bc_0$ vanishes.
In the case of Theorem \ref{th:discrete_solution_mca_less} when the initial data satisfies the MCA constraint with a strict inequality, neither the MCA nor the sum of the components of the solution are constant.  
We prove this in Proposition \ref{prop:growth_mca_less_12} and also observe it in numerical experiments as shown in Figure \ref{fig:growing}. 

\begin{proposition} \label{prop:growth_mca_less_12}
Let $\bmy$ be a solution to $\dot{\bmy}=L\bmy$ with initial data $\bmy_0$ satisfying \eqref{eq:discrete_setup} and $\mca(\bmy_0)<\frac{1}{2}$.  Then $\mca(\bmy)$ and $\sum_j y_j$ are increasing functions of time at all times for which the solution satisfies $\mca(\bmy)<1/2$, $y_j\geq 0$ for all $j$, and $\sum_j y_j>0$.
\end{proposition}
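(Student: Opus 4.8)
The plan is to mirror the continuous computation of Lemma~\ref{lem:increase-mca}, replacing integration by parts with direct bookkeeping of the coefficients produced by the matrix $L$. Throughout, since $\mca(\bmy)<\tfrac12$ the Heaviside factor vanishes, so the dynamics really is $\dot{\bmy}=L\bmy$ as in the hypothesis. Write $S=\sum_{j=0}^M y_j$ for the total population, and recall from \eqref{eq:discrete-gradient} that $(L\bmy)_k=\sum_{j<k}y_j-\sum_{j>k}y_j$, with $L$ given by \eqref{eq:defn_Lmatrix}.

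First I would handle $S$. Differentiating and summing, $\dot S=\sum_{k=0}^M(L\bmy)_k$; counting how often each $y_j$ occurs with a plus or a minus sign shows its coefficient is $(M-j)-j=M-2j$, so that $\dot S=\sum_j(M-2j)y_j=MS-2\sum_j j\,y_j$. Using $\mca(\bmy)=\frac{1}{MS}\sum_j j\,y_j$ from \eqref{eq:discrete-mca}, this collapses to $\dot S=2MS\bigl(\tfrac12-\mca(\bmy)\bigr)$. Under the standing hypotheses $S>0$ and $\mca(\bmy)<\tfrac12$ this is strictly positive, which settles the claim for $\sum_j y_j$.

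The MCA is the quotient $\mca(\bmy)=T'/S$ with $T'=\sum_j \tfrac{j}{M}y_j$. I would compute $\dot T'$ the same way: the coefficient of $y_i$ in $\dot T'$ is $\sum_{j>i}\tfrac{j}{M}-\sum_{j<i}\tfrac{j}{M}=\tfrac{1}{M}\bigl(\tfrac{M(M+1)}{2}-i^2\bigr)$, giving $\dot T'=\tfrac{M+1}{2}S-\tfrac1M\sum_i i^2 y_i$. Applying the quotient rule, substituting the expressions for $\dot S$ and $\dot T'$, and reorganizing the result around the square $2M\bigl(\tfrac12-\mca(\bmy)\bigr)^2$ (completing the square in $\mca$), the second-moment terms combine into $\tfrac1M\sum_j j(M-j)y_j$ together with a residual constant. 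The outcome is the clean discrete identity
\[
\frac{d}{dt}\mca(\bmy)=2M\Bigl(\tfrac12-\mca(\bmy)\Bigr)^2+\frac12+\frac{1}{MS}\sum_{j=0}^M j(M-j)\,y_j,
\]
the exact analogue of equation~\eqref{eq:mca-growth} from the continuous case, the extra $\tfrac12$ being a genuine discrete boundary contribution (the discrete summation by parts does not kill its boundary terms). Since $j(M-j)\geq 0$ for $0\leq j\leq M$ and $y_j\geq 0$, every term on the right is non-negative and the middle term is strictly positive, so $\frac{d}{dt}\mca(\bmy)\geq\tfrac12>0$, proving that the MCA is strictly increasing.

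I expect the only delicate point to be the algebraic reorganization in the final step: getting the combinatorial coefficients $M-2j$ and $\tfrac{M(M+1)}{2}-i^2$ exactly right, and then verifying that the quotient-rule expression really does collapse to the stated completed-square form. None of this is conceptually hard, but it is precisely where a sign error or a dropped boundary term would hide; before trusting the identity I would sanity-check it against the explicit cases $M=1$ and $M=2$, where the $j(M-j)$ sum vanishes and the derivative can be evaluated by hand.
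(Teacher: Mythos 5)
Your proposal is correct, and your key identity checks out: with $S=\sum_j y_j$ and $\mca(\bmy)=\frac{1}{MS}\sum_j j\,y_j$, your coefficients $M-2j$ and $\frac{1}{M}\bigl(\frac{M(M+1)}{2}-i^2\bigr)$ are both right, and the quotient rule together with the substitution $i^2=iM-i(M-i)$ gives exactly
\begin{equation*}
\frac{d}{dt}\mca(\bmy)=2M\Bigl(\tfrac12-\mca(\bmy)\Bigr)^2+\tfrac12+\frac{1}{MS}\sum_{j=0}^M j(M-j)\,y_j;
\end{equation*}
your suggested sanity check at $M=1$ passes, both sides equaling $(y_0^2+y_1^2)/S^2$. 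The paper's proof runs along the same skeleton---the same computation $\dot S=\sum_j(M-2j)y_j$, the same triangular-number coefficient for $\sum_i (i/M)(L\bmy)_i$, the same quotient rule---but finishes differently: instead of completing the square, it discards the cross term early, using $\sum_i(i/M)y_i<\tfrac12 S$ together with $\sum_j(L\bmy)_j>0$ to obtain the bound $\frac{d}{dt}\mca(\bmy)>\frac{1}{S}\sum_i y_i\bigl(\tfrac12+\frac{i(M-i)}{M}\bigr)$, which is precisely your identity with the non-negative square term $2M(\tfrac12-\mca)^2$ dropped. Your exact identity is therefore sharper and buys three things: a uniform bound $\frac{d}{dt}\mca(\bmy)\geq\tfrac12$; the honest discrete analogue of the paper's continuous identity~\eqref{eq:mca-growth} (the rate $2M$ in place of $2$ reflects that $L$ approximates $M\nabla E$ under the sampling~\eqref{eq:integration}, and your extra $\tfrac12$ is indeed a discretization correction absent in the continuum); and independence from the hypothesis $\mca(\bmy)<\tfrac12$ for the MCA claim, since the paper's inequality route needs $\mca<\tfrac12$ to control the sign of $\sum_j(L\bmy)_j$, whereas your identity yields $\frac{d}{dt}\mca>0$ wherever $\dot\bmy=L\bmy$, $y_j\geq 0$, and $S>0$. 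For the monotonicity of $S$ the two arguments coincide, the paper stating it as $\mca(\bmy)<\tfrac12\iff\frac{d}{dt}\sum_j y_j>0$ and you as $\dot S=2MS\bigl(\tfrac12-\mca(\bmy)\bigr)$.
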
 

\begin{proof}
Notice that 
\begin{equation}
\label{eq:sum-Ly}
\mca(\bmy) < \frac 1 2 \iff 0 > \sum_{j=0}^M (2j-M) y_{j} = -\sum_{j=0}^{M} (L\bmy)_j = - \frac{d}{dt} \sum_{j=0} ^M y_j.
\end{equation}
Therefore, it follows from $\mca(\bmy)<\frac{1}{2}$ that $\sum y_j$ is increasing. Next, we will show that the MCA increases as $t$ increases.

Since $\mca(\bmy)<\frac{1}{2}$,
\[
\sum_{j=0}^M (j/M) y_j <\frac{1}{2}\sum_{j=0}^M y_j.
\]
Then using this equation together with $\dot \bmy = L \bmy$ and the definition of the MCA, we compute 
\begin{align*}
\frac{d}{dt}\mca(\bmy(t)) 
 &= \frac{\left(\sum_i (i/M) (L\bmy)_i\right)\left(\sum_j y_j\right)-\left(\sum_i (i/M)y_i\right)\left(\sum_j (L\bmy)_j\right)}{\left(\sum_k y_k\right)^2}\\
 &>\frac{\sum_i (i/M-\frac{1}{2})(L\bmy)_i}{\sum_k y_k}
\end{align*}
It remains to prove that $\sum_i (i/M-\frac{1}{2})(L\bmy)_i\geq 0$.
By the definition of $L$,
\begin{align*}
\sum_i (i/M)(L\bmy)_i 
=& \phantom{+}\,\frac{0}{M}(-y_1-y_2-...-y_M)\\
 & +\frac{1}{M}(y_0-y_2-...-y_M)\\
 & +\frac{2}{M}(y_0+y_1-y_3-...-y_M)\\
 & +...\\
 & +\frac{M}{M}(y_0+y_2+...+y_{M-1})\\
=&  \sum_{i=0}^M y_i \left(\frac{M(M+1)}{2M}-\frac{(i+1)i}{2M}-\frac{i(i-1)}{2M}\right)\\
=&  \sum_{i=0}^M y_i \left(\frac{M(M+1)}{2M}-\frac{2i^2}{2M}\right).
\end{align*}
Using equation~(\ref{eq:sum-Ly}),
\begin{align*}
\sum_i \left((i/M)-\frac{1}{2}\right)(L\bmy)_i 
=& \sum_{i=0}^M y_i\left(\frac{M(M+1)}{2M}-\frac{2i^2}{2M}-\frac{M}{2}+i\right)\\
=& \sum_{i=0}^M y_i\left(\frac{1}{2}-\frac{i^2}{M}+i\right)\\
=& \sum_{i=0}^M y_i\left(\frac{1}{2}+\frac{(M-i)i}{M}\right).
\end{align*}
We conclude that
\[
\frac{d}{dt}\mca(\bmy(t)) > \frac{1}{\sum_k y_k}\left(\sum_{i=0}^M y_i\left(\frac{1}{2}+\frac{(M-i)i}{M}\right)\right)>0.
\]
The last inequality follows from the assumption that $y_k \geq 0$ for all $k$ and $\sum_k y_k > 0$.
\end{proof}

A numerical example of growing population size, $\sum_i y_i$, is shown in figure~\ref{fig:growing}. 
In this example, the initial data has a mean competitive ability, MCA, strictly less than $\frac{1}{2}$. 
Therefore, $\frac{d}{dt}\sum_i y_i>0$ until $\mca(\bmy)=\frac{1}{2}$, which is compatible with the results of proposition~\ref{prop:growth_mca_less_12}.  Theorems \ref{th:discrete_sol_mca_equal} and \ref{th:discrete_solution_mca_less} and Proposition \ref{prop:growth_mca_less_12} enable us to identify all stationary solutions of the adaptive dynamics.  Interestingly, these are precisely the equilibrium strategies of the discrete team game.

\begin{figure}[h]
 \centering
 \includegraphics[width=0.5\textwidth]{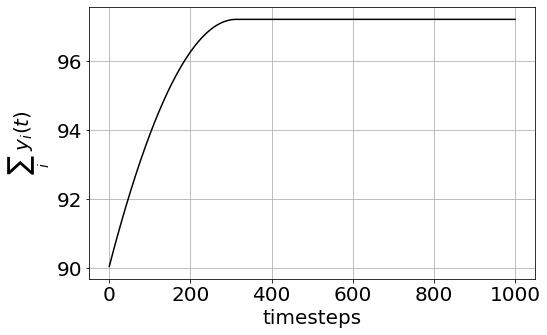}
 \caption{\label{fig:growing}In this numerical example, $\mca(\bmy_0)<\frac{1}{2}$ and the sum of the components, $\sum_i y_i(t),$ is growing until $\mca(\bmy)=\frac{1}{2}$.}
\end{figure}

\begin{corollary} \label{cor:discrete_stationary} 
The stationary solutions of the discrete team game adaptive dynamics \eqref{eq:ad_discrete0} with initial data satisfying \eqref{eq:discrete_setup} and the MCA constraint \eqref{eq:discrete-mca} are precisely the equilibrium strategies of the discrete team game.  
\end{corollary}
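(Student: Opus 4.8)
The plan is to characterize a stationary strategy as a vector $\bmy$ satisfying the constraints \eqref{eq:discrete_setup} and \eqref{eq:discrete-mca} for which $A\bmy=0$, and then to split the analysis according to the value of the Heaviside factor $H(\bw\cdot\bmy)$ in \eqref{eq:ad_discrete0}. First I would dispose of the interior of the constraint set. If $\mca(\bmy)<\frac12$, then $\bw\cdot\bmy<0$, so $H(\bw\cdot\bmy)=0$ and $A$ coincides with $L$; Proposition~\ref{prop:growth_mca_less_12} then gives $\frac{d}{dt}\mca(\bmy)>0$, so the MCA is strictly increasing and $\bmy$ cannot be stationary. Equivalently, Lemma~\ref{lem:L} shows $\ker L$ is trivial when $M+1$ is even and is spanned by the sign-alternating vector $(1,-1,1,\dots,1)$ when $M+1$ is odd, neither of which contains a non-negative nonzero vector. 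Hence every stationary strategy lies on the boundary $\mca(\bmy)=\frac12$.

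On that boundary $\bw\cdot\bmy=0$, so $H(\bw\cdot\bmy)=1$ and $A\bmy=(I-P)L\bmy$; thus the stationary strategies are exactly the non-negative elements of $\ker(I-P)L$ with positive component sum. Here I would invoke the bases already computed. By Proposition~\ref{prop:Jordanchain-1}, when $M+1$ is even this kernel is the line spanned by $\bv_1=(1,1,\dots,1)$, whose non-negative nonzero multiples are precisely its positive multiples. By Proposition~\ref{prop:Jordanchain-2}, when $M+1$ is odd this kernel is spanned by $\bv_e$ and $\bv_o$ (ones in the even, respectively odd, components); since these have disjoint supports covering all indices, the non-negative elements of the kernel are exactly $a\bv_e+b\bv_o=(a,b,a,b,\dots,a)$ with $a,b\ge 0$ not both zero.

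It then remains to confirm self-consistency of the case split and to match the outcome with Theorem~\ref{th:discrete_eqs}. For consistency I would verify that each such kernel vector actually satisfies $\mca=\frac12$: the index sets $\{0,2,\dots,M\}$ and $\{1,3,\dots,M-1\}$ are each invariant under $j\mapsto M-j$ (using that $M$ is even when $M+1$ is odd), so $\bv_e$ and $\bv_o$ are symmetric about the competitive ability value $\frac12$, whence $\bw\cdot\bv_e=\bw\cdot\bv_o=0$; likewise $\bw\cdot\bv_1=0$. Therefore the boundary solutions genuinely have $H=1$ at them and are true stationary points. Comparing with Theorem~\ref{th:discrete_eqs} — positive multiples of $(1,\dots,1)$ when $M$ is odd (i.e.\ $M+1$ even) and vectors $(a,b,a,\dots,a)$ with $a,b\ge0$ not both zero when $M$ is even (i.e.\ $M+1$ odd) — shows that the stationary strategies coincide exactly with the equilibrium strategies of the discrete team game, and reading the equality in the other direction gives the converse inclusion at once.

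The step I expect to be most delicate is the bookkeeping around the Heaviside switch: one must ensure that $\ker(I-P)L$ is examined only where the constraint is active ($H=1$), and that no stationary point is lost or spuriously gained at the interface $\mca=\frac12$. The symmetry computation $\bw\cdot\bv_e=\bw\cdot\bv_o=\bw\cdot\bv_1=0$ is what closes this gap, since it guarantees that the kernel computed under the hypothesis $H=1$ is automatically consistent with that hypothesis.
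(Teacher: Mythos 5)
Your proposal is correct, and it reaches the conclusion by a genuinely different route than the paper. The paper's proof never characterizes stationarity as $A\bmy_0=0$ directly; instead it invokes the explicit solution representation of Theorem~\ref{th:discrete_sol_mca_equal} (the real Jordan form), arguing that a constant solution forces all trigonometric terms to cancel and the polynomial term $\bc_0(t)$ to be constant, so that the initial datum must lie in the span of the genuine (not generalized) eigenvectors of $(I-P)L$ for the eigenvalue $0$, as given in Propositions~\ref{prop:Jordanchain-2} and~\ref{prop:Jordanchain-1}; the case $\mca(\bmy_0)<\frac12$ is excluded via Proposition~\ref{prop:growth_mca_less_12}, exactly as in your first step. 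Your argument bypasses the solution formula entirely: stationarity of the ODE \eqref{eq:ad_discrete0} is equivalent to membership in the kernel of the piecewise-linear vector field, so it suffices to compute $\ker L$ (Lemma~\ref{lem:L}) and $\ker(I-P)L$ (Propositions~\ref{prop:dimker}, \ref{prop:Jordanchain-1}, \ref{prop:Jordanchain-2}) and intersect with the constraint set. This is more elementary and, in two respects, more careful than the paper's proof: you explicitly intersect the kernel with the non-negativity constraints \eqref{eq:discrete_setup} (the paper's closing claim that ``the span of these eigenvectors consists precisely of the equilibrium strategies'' is strictly speaking only true after this intersection, since the span contains vectors with negative entries), and you verify the Heaviside self-consistency $\bw\cdot\bv_1=\bw\cdot\bv_e=\bw\cdot\bv_o=0$, so that the kernel computed under the hypothesis $H=1$ really consists of points where $H=1$ --- a point the paper leaves implicit. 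What the paper's approach buys in exchange is that it exploits machinery already established (Theorems~\ref{th:discrete_sol_mca_equal} and~\ref{th:discrete_solution_mca_less}) and simultaneously exhibits how every non-stationary solution behaves (oscillatory and polynomially growing modes), whereas your kernel computation isolates the stationarity question cleanly but says nothing about the surrounding dynamics.
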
 

\begin{proof} 
A stationary solution must satisfy $ \dot \bmy = 0, \quad \bmy(0) = \bmy_0$. 
Thus, it will be equal to the initial data for all time.  It therefore follows from Proposition \ref{prop:growth_mca_less_12} that for initial data with $\mca(\bmy_0) < 1/2$, it cannot be a stationary solution.  We are therefore left with the case when the initial data satisfies the MCA constraint with equality.  Since a polynomial cannot be identically equal to a nonconstant trigonometric function, the polynomial term $\bc_0(t)$ in the solution must be constant.  Similarly, all of the trigonometric terms must cancel in order to remain constant.  The polynomial term arises from the 0 eigenvalue of $(I-P)L$ and its eigenvectors together with its generalized eigenvectors.  The constant term in $\bc_0(t)$ is a linear combination of the eigenvectors, whereas any non-constant terms in $\bc_0(t)$ arise from the generalized eigenvectors.  The eigenvectors of $(I-P)L$ for the eigenvalue $0$ are given in Propositions \ref{prop:Jordanchain-2} and \ref{prop:Jordanchain-1}.  We note that the span of these eigenvectors consists precisely of the equilibrium strategies of the discrete team game.  
\end{proof}

\subsection{Evolution to an equilibrium strategy} \label{ss:discrete_evo_eq}
The equilibrium strategies for the discrete game are precisely the stationary points of the adaptive dynamics.  Consequently, they remain unchanged by the evolution according to the adaptive dynamics. It may not be immediately apparent that there exist strategies that will evolve to an equilibrium strategy for the discrete game.  To demonstrate the existence of such strategies, consider the problem of finding $\bmy_0$ such that 
\begin{equation}
\dot \bmy = L \bmy, \quad \bmy(0)=\bmy_0, \quad \bmy(T)=(1,1,\ldots, 1).
\label{eq:goto-u}
\end{equation}
By reversing the time variable, this is equivalent to solving the initial value problem: 
\[ \dot \bmy = - L \bmy, \quad \bmy(0) = (1, 1, \ldots, 1), \quad \bmy(T) = \bmy_0.\]
In solving this initial value problem, there is no unknown. Instead, we determine $\bmy_0$ by assigning it $\bmy(T)=\bmy_0$. 
Whenever $\bmy(0)$ has strictly positive elements there is always a $T>0$ such that the problem is solvable, and such that $\bmy(T)$ also has positive elements. 
Since $T$ can be chosen freely, as long as $\bmy(T)$ has non-negative elements, there is a one-parameter family of initial values leading to the stationary solution.
The requirement that $\bmy(T)$ has non-negative elements typically implies that $T$ cannot be very large, but maybe more importantly, $T$ can be arbitrarily small.
Figure~\ref{fig:y-to-u} shows a numerical example of this.  Although the equilibrium strategies of the game are stationary  points for the adaptive dynamics, they are not stable.  In \S \ref{sec:branching} we show that any perturbation of the stationary solution unsettles the system. 

\begin{figure}
 \centering
 \includegraphics[width=0.5\textwidth]{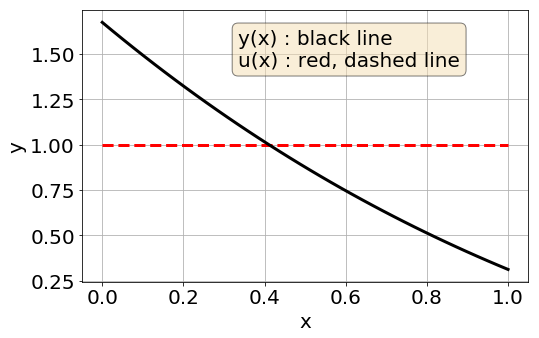}
 \caption{\label{fig:y-to-u}In this numerical example, the function $y_0(x)$ is plotted against the constant function $u(x) \equiv 1$. For this particular example,  $\mca(\bmy_0)<\frac{1}{2}$ and the dynamics is described by equation~(\ref{eq:goto-u}).}
\end{figure}

Given the linearity of the evolution equation $\dot \bmy=A\bmy$ it is tempting to imagine that a species which initially has $y_j>0$ for all $j$ can be ``split'' into two species, one of which is $\bm{a} = (a, a, a, \ldots, a)$ for $a=\min\{y_j\}$.  This $\bm{a}$ species is constant under the adaptive dynamics.  Assume that $\mca(\bmy_0) = 1/2$.
Then we have 
$\bmy(t)= \bm{a} + \bv(t)$, where $v_j\geq 0$, for all $j=0,1,...,M$.
Since $\mca(\bm{a}) = 1/2$, and $\mca(\bmy_0) = 1/2$, the initial data $\bv_0$ also satisfies $\mca(\bv_0)=\frac{1}{2}$.
However, if $\bv_0$ is not a stationary solution then $\bmy$ evolves, and at a later time $t>0$ it could be better than $\bmy_0$ in the sense that 
$E[\bmy(t),\bmy_0]>0$.    It could also happen that $\bv$ evolves in such a way that $\sum v_j$ decreases, causing the population $\sum y_j = \sum a+v_j$ to shrink.  So, although the $\bm{a}$ subspecies has constant population, the other part of the species does not have this guarantee.  In fact, it could occur that some $v_j$ become negative.  So, there is no way to see that a stable subspecies can safeguard  ``the whole of the species'' from neither mutation nor from attrition.  

\subsection{Branching}\label{sec:branching}
We explore the possibility of \emph{branching} by perturbing the equilibrium strategy $(1, 1, \ldots, 1)$ (with 51 sample points $k=0,1,...,50$, so $M=50$) by a small amount at its midpoint.
The perturbed strategy $\bmy$ has $y_k=1$ for all $k$ except $k=25$, with either $y_{25}=0.99$ or $y_{25}=1.01$.  
Figure \ref{fig:up1} shows the perturbed strategy with $y_{25}=1.01$ as a red line and the resulting branch as the black line, whereas figure \ref{fig:down1} shows the the perturbed strategy with $y_{25}=0.99$ as a red line and the corresponding branching strategy in black.
The evolved strategies in the two cases are very similar.
They are mirror images of each other when mirrored through the equilibrium strategy $(1, 1, \ldots, 1)$.  

\noindent
\begin{minipage}[t]{0.47\textwidth}
	\begin{figure}[H]
		\includegraphics[width=\textwidth]{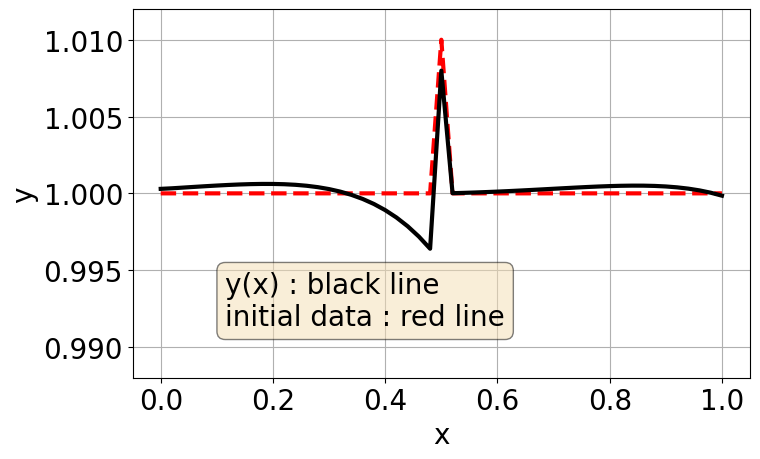}
		\caption{Perturbation of $(1, 1, \ldots, 1)$ by $+0.01$ at its midpoint.}
		\label{fig:up1}
	\end{figure}
\end{minipage}%
\hfill
\begin{minipage}[t]{0.47\textwidth}
	\begin{figure}[H]
		\centering
		\includegraphics[width=\textwidth]{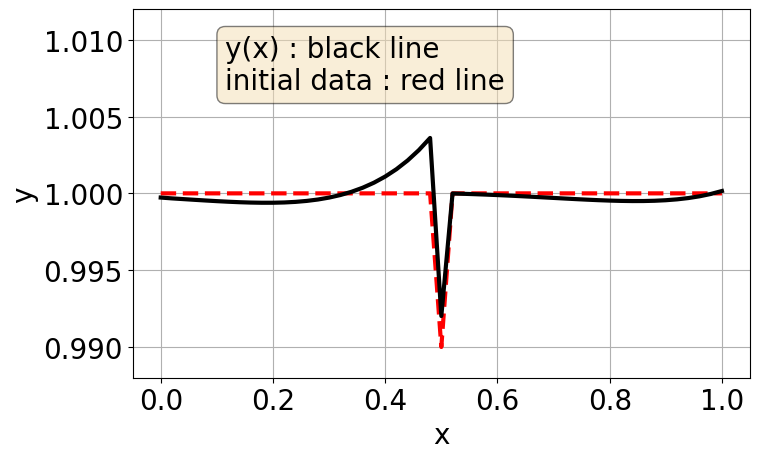}
		\caption{Perturbation of $(1, 1, \ldots, 1)$ by $-0.01$ at its midpoint.}
		\label{fig:down1}
	\end{figure}
\end{minipage}\\[0.8em]

The results in Figures~\ref{fig:up1} and~\ref{fig:down1} can be understood from the linearity of the system \eqref{eq:ad_discrete0} and the explicit form of the solution depending on the initial data given in Theorems \ref{th:discrete_sol_mca_equal} and \ref{th:discrete_solution_mca_less}.  Thus a small perturbation $\pm 0.01 \bm{e}_{26}$, of the initial data $(1, 1, \ldots, 1)$ with $\bm{e}_{26}$ the $26^{th}$ standard unit vector in $\R^{51}$ drives the evolution in opposite directions for the two opposite signs.

\section{Discussion and outlook}\label{sec:discussion}
A key feature of our team games is the linearity of the payoff functions in their definitions. Linear payoff functions are not applicable in the classification theory that Geritz \textit{et al.}~\cite{geritz1996evolutionarily} established for adaptive dynamics. Indeed, a convergence-stable stationary point in the adaptive dynamics evolution is such that (i) its second derivative with respect to the mutant's strategy is positive and (ii) its second derivative with respect to the resident population's strategy is larger than the second derivative with respect to the mutant's strategy.  If the payoff is a linear function of the mutant's strategy, then according to Geritz \textit{et al.}~\cite{geritz1996evolutionarily} ``once the singular strategy has been established, all mutations are neutral.''  Even though this conclusion is reasonable, our results show that the absence of dynamics is a unique feature of the equilibrium strategies of the game.  
A stationary point cannot be attractive in a linear game, but considering that mutations are random in theory it can be argued that branching is possible in the team game. 

We can make this argument theoretically. 
Since the adaptive dynamics setting is a deterministic approximation to a mutation process, which is random, the underlying model assumes that the traits of a species are developing randomly.
Thus, the strategies in the current work can be thought of as approximations to traits that are in fact less predictable.  From this point of view, we can expect that unstable or neutral stationary points in linear adaptive dynamics are idealizations, and then it would be reasonable to ask what happens if the stationary solutions are perturbed.  In \S \ref{sec:branching} we showed examples of this for the discrete team game.

%
%

\subsection{Comparison between the function-valued and discrete games} \label{sec:comparison}
In both the function-valued and the discrete team games, it is not clear how to interpret strategies that assume negative values.  For this reason, we assume that the initial data is non-negative.  For certain initial conditions, the adaptive dynamics may immediately result in either a function $f$ that assumes negative values or a vector $\bmy$ that has some component $y_j < 0$.  In particular, this can occur if the initial data $f_0$ vanishes at some points in $[0,1]$ or the initial data $\bmy_0$ has some components $y_j = 0$.

To compare this phenomenon for the discrete and function valued games, we consider samples of $f_0(x)=(x-\frac{1}{2})^2$ at points $x_j=j/M$ with $0\leq j\leq M$ for some integer $M>2$.
Then $f_0$ is positive at the $x_j$ which is closest to (but not equal to) $\frac{1}{2}$.
If $M=6$, then $f_0(x_3)=f_0(1/2)=0$, and $f_0(x_4)=1/36.$
Thus, the Newton forward integration would work for small step sizes, since
\[
f_0(x_4)+\varepsilon(1-P)\nabla E(f_0)(x_4)= \left(\frac{1}{6}\right)^2+\varepsilon\left(\frac{2}{3}\left(\frac{1}{6}\right)^3-\frac{1}{10}\frac{1}{6}\right)=\frac{1}{36}-\frac{11\varepsilon}{810}.
\]
Then, the $f_0(x_4)+\varepsilon(1-P)\nabla E(f_0)(x_4)>0$ for small $\varepsilon$.
This is visualized in figure~\ref{fig:non-positive}, where the red, dashed line is $f_0+\varepsilon(1-P)\nabla E(f_0)$ with $\varepsilon=2$.
For $x$ larger than $\frac{1}{2}$ but still sufficiently close to $\frac{1}{2}$, the selection gradient changes the strategy to negative values, shown by the red dashed line in figure~\ref{fig:non-positive}.
The red dots in the same figure show the samples of the evolved strategy at $x_0,x_1,...,x_6$, and none of them are negative.

It should be noted that whereas the initial condition $f_0(x)=(x-\frac{1}{2})^2$ remains a  strategy for short times in the discrete game, it may eventually also evolve to have some negative components.  We also expcet that there are strategies that for both the discrete and function valued game immediately evolve towards negative values and thus do not represent strategies in the games.  A good example of such ``impossible'' strategies is $f_0$ in~(\ref{eq:impossible}).

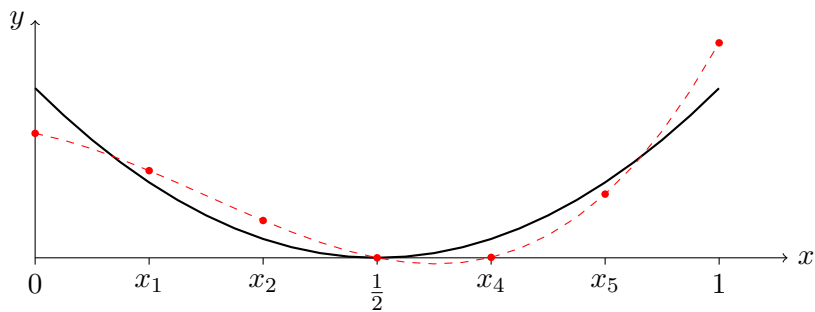
\begin{figure}[H]
\centering
\begin{tikzpicture}[scale=9,domain=0.3:0.7]
  \draw[->] (0,0) -- (1.1,0) node[right] {$x$};
  \draw[->] (0,-0.01) node[below] {$0$} -- (0,0.35) node[left] {$y$};
  \draw     (0.5,0)  -- (0.5,-0.01) node[below] {$\frac{1}{2}$};
  \draw     (5/6,0)  -- (5/6,-0.01) node[below] {$x_5$};
  \draw     (4/6,0)  -- (4/6,-0.01) node[below] {$x_4$};
  \draw     (2/6,0)  -- (2/6,-0.01) node[below] {$x_2$};
  \draw     (1/6,0)  -- (1/6,-0.01) node[below] {$x_1$};
  \draw     (1,0)  -- (1,-0.01) node[below] {$1$};
  \draw[color=black, thick, domain=0.0:1.0] plot (\x, { (\x-0.5)*(\x-0.5)});
  \draw[color=red,dashed,domain=0.0:1.0] plot (\x, { (\x-0.5)*(\x-0.5)+2*(0.6667*(\x-0.5)*(\x-0.5)*(\x-0.5)-0.1*(\x-0.5))});
  \node at (1.0000,0.31666667) [circle,fill,color=red,inner sep=1pt] {};
  \node at (0.8333,0.09382716) [circle,fill,color=red,inner sep=1pt] {};
  \node at (0.6667,0.00061728) [circle,fill,color=red,inner sep=1pt] {};
  \node at (0.5000,0.0000) [circle,fill,color=red,inner sep=1pt] {};
  \node at (0.3333,0.05493827) [circle,fill,color=red,inner sep=1pt] {};
  \node at (0.1667,0.12839506) [circle,fill,color=red,inner sep=1pt] {};
  \node at (0.0000,0.18333333) [circle,fill,color=red,inner sep=1pt] {};
\end{tikzpicture}
\caption{The samples of a strategy may be positive even if the underlying function would not remain non-negative during the evolution.}
\label{fig:non-positive}
\end{figure}

\subsection{Treating the constraint at all times}
If the MCA constraint is an equality, then in both games the projection $I-P$ is applied, whereas when it is a strict inequality, then we do not project.  This results in a discontinuity in the formulation of the dynamical system.  
To see how this affects the solution of the adaptive dynamics in the discrete case, Figure~\ref{fig:stepfunc} shows an initial data with low mean competitive ability and the corresponding solution after 500 timesteps. 
The solution's MCA grows and at $t\approx 300$ timesteps, there is a sharp change of direction of the components, as seen in figure~\ref{fig:y2}. 
This is a result of the dynamics changing equation from $\dot \bmy = L \bmy$ to $\dot \bmy = (I-P)L \bmy$. 
The $\mca$ transitions abruptly to the constant $\mca(\bmy)=\frac{1}{2}$, and as can be seen in Figures~\ref{fig:mca-evolution-1} and~\ref{fig:mca-evolution-2}, this is not an artefact of the integration step size. 
In Figure~\ref{fig:mca-evolution-1}, the stepsize $\varepsilon$ in the integration method is 0.02 whereas it is 0.0002 in Figure~\ref{fig:mca-evolution-2}. 
Consequently, the point where $\mca(\bmy)=\frac{1}{2}$ is reached after 30 iterations and 3000 iterations, respectively.
This result is independent of the sampling frequency $h=\frac{1}{M}$ in~(\ref{eq:integration}). 

The $\mca$ of a function-valued strategy is an increasing function of time, whenever $\mca(f_0)<\frac{1}{2}$, according to Lemma~\ref{lem:increase-mca}.
The lower bound is expressed in the inequality 
$\mca(\alpha_t)>\frac{1}{2}-\frac{1}{c_0+2t}$, $t>0$,
where $c_0$ is determined by the initial value $\mca(f_0)=\frac{1}{2}-\frac{1}{c_0}.$
How close to being an equality is this?
The numerical results in Figures~\ref{fig:mca-evolution-1} and~\ref{fig:mca-evolution-2} reaches $\mca(\alpha_t)=\frac{1}{2}$ quickly even if the MCA of the initial condition is strictly less than $\frac{1}{2}$.
The lower bound $g(t)=\frac{1}{2}-\frac{1}{c_0+2t}$, however, does not reach $\mca(g)=\frac{1}{2}$ at finite time, so the real MCA is significantly larger than this lower bound.

\noindent
\begin{minipage}[b]{0.47\textwidth}
	\begin{figure}[H] 
		\includegraphics[width=\textwidth]{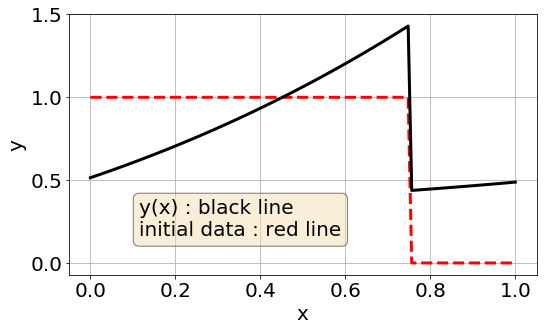}
		\caption{Evolution of a strategy $\bmy$ with $\mca(\bmy)<\frac{1}{2}$.}
		\label{fig:stepfunc}
	\end{figure}
\end{minipage}%
\hfill
\begin{minipage}[b]{0.47\textwidth}
	\begin{figure}[H]
		\centering
		\includegraphics[width=\textwidth]{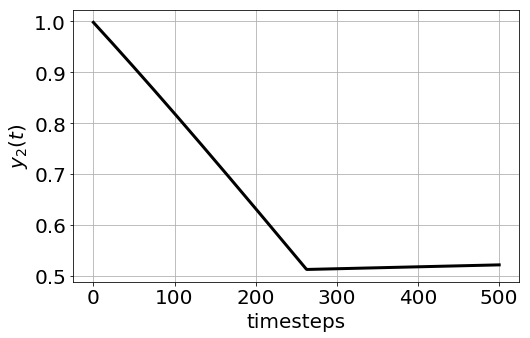}
		\caption{The second component of $\bmy$ at time $t$.}
		\label{fig:y2}
	\end{figure}
\end{minipage}\\[0.8em]

\noindent
\begin{minipage}[b]{0.47\textwidth}
	\begin{figure}[H] 
		\includegraphics[width=\textwidth]{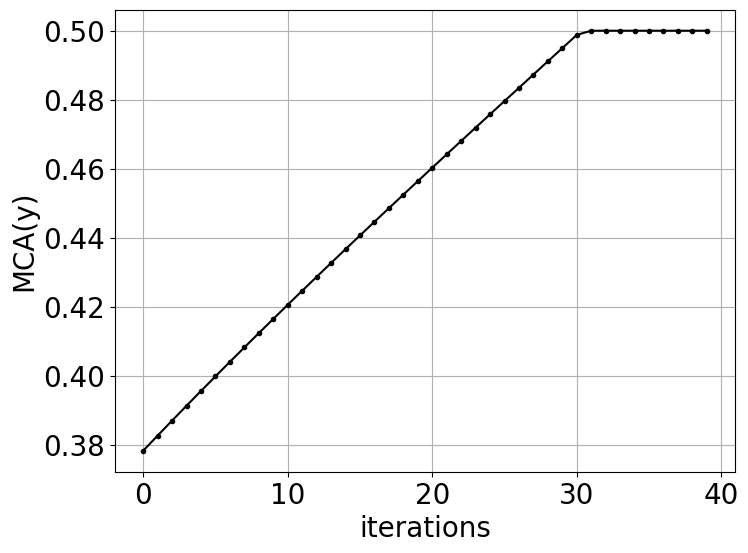}
		\caption{Evolution with integration step size $\varepsilon=0.02$ of a strategy $\bmy$ with $\mca(\bmy)<\frac{1}{2}$.}
		\label{fig:mca-evolution-1}
	\end{figure}
\end{minipage}%
\hfill
\begin{minipage}[b]{0.47\textwidth}
	\begin{figure}[H]
		\centering
		\includegraphics[width=\textwidth]{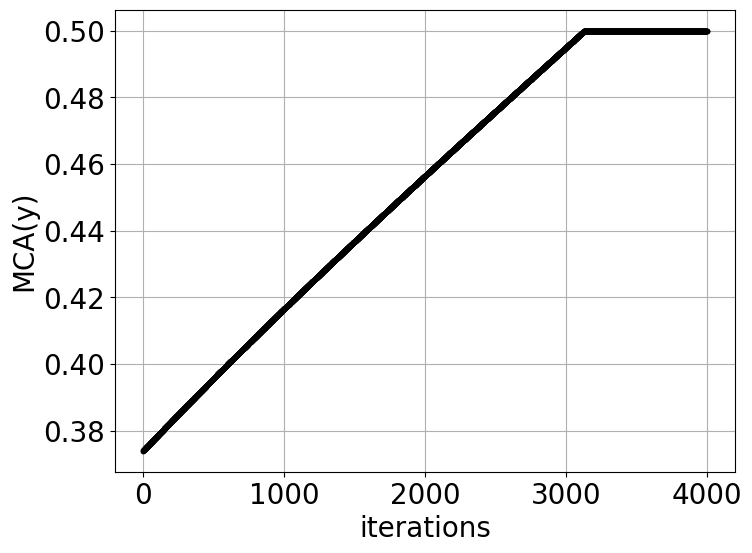}
		\caption{Evolution with integration step size $\varepsilon=0.0002$ of a strategy $\bmy$ with $\mca(\bmy)<\frac{1}{2}$.}
		\label{fig:mca-evolution-2}
	\end{figure}
\end{minipage}\\[0.8em]



%
%

\subsection{New perspectives on adaptive dynamics and possible experimental verifications}\label{sec:perspective}
The distribution of competitive ability within a species can be the result of either standing genetic variation or arise from a monoclonal population where the phenotype for each individual (i.e.\ the competitive ability) has a stochastic element and follows a certain distribution. The latter interpretation is the standard one within adaptive dynamics, i.e.\ a monoclonal population with a certain function-valued trait (the distribution of competitive abilities) is invaded (or not) by a mutant with slightly different distribution \cite{brannstrom2013hitchhiker}. However, the former interpretation is also reasonable, but here change in the resident population is possible not only via invasion, but also acquired mutations that alter the distribution of competitive abilities. How this interpretation should be treated from a mathematical perspective is not clear, since the standard adaptive dynamics framework cannot capture intraspecies genetic variability. 



To investigate how well the team games and the predictions of adaptive dynamics fit with empirical observations, one could conduct experiments involving competition between strains or species. In order to test the predictions of the team game, we suggest that the following conditions should be met:
\begin{enumerate}
	\item Two or more species (or strains) that compete for the same resources should be studied simultaneously. They need to be asexual and they should ideally reside in a relatively homogeneous environment such that spatial or temporal separation is  unlikely. Likewise, each member of a species should be able to compete with any member of any other species. There should not be any ``protected groups'' in the ecological system. 
	\item The competitive ability of the species needs to be observable. Moreover, it needs to be quantitative; the competitive ability of an individual should be represented by a number. It is possible that the competitive ability is a compound ability (consisting of several abilities) as long as the individuals can be ordered from low competitive ability to high competitive ability. This ensures that the arguments in Section~\ref{sec:got} can be carried out, which is necessary for the theoretical setting. 
	\item Every species' mean competitive ability should be bounded by the same value. This requirement is due to the mathematical constraint as explained in Section~\ref{sec:constraints}.
	\item The experiment has to run sufficiently long time that evolution can be observed. This allows for observations of the dynamics of evolution. The initial value problem~(\ref{eq:problem}) can be tested over time if the competitive ability of the species can be observed as it (and \emph{if} it) changes over time. 
	\item Mutations that affect the considered trait have to be rare enough so that genetic variation does not arise during the experiment, which could confound the interpretation of the results.
\end{enumerate}

The results of such a study could reveal whether or not the theory presented in this paper can explain the evolutionary dynamics of certain species. In particular our theory could be a helpful tool in  biogeography, where it is recommended that manipulative experiments and temporal data sets are to be combined with theoretical models in order to explore the diversity and composition of species \cite{hanson2012}.

\subsection{The paradox of the plankton}
The vast amount of microbial species is seemingly a paradox from the theoretical point of view \cite{hutchinson1961paradox}. Models in competition theory have described that a number of species is limited by the number of key resources. This should raise the concern that such models fail to describe the ecology of microbes. In this work, we have aimed at presenting a model that allows a vast number of species to simultaneously co-exist and compete for survival. As it turns out, the adaptive dynamics applied to the game of teams is constantly changing the composition of a species for nearly all species.  The only exception to this are the species characterized by the equilibrium strategies of the game.  
These equilibrium strategies are the only stationary points of the adaptive dynamics.  However, there is no stability in the dynamics in the sense that any perturbation of a stationary point will unsettle the dynamics.  These results align well with the idea that evolution does not stabilize and moreover, it does not put a restriction on the number of species.  Mechanisms such as genetic drift will make sure that any species accrues DNA changes. Even ``living fossils'' such as coelacanths are never static \cite{woolston2013}.  This also fits with ``biology's first law'' the tendency for diversity and complexity to increase in evolving systems \cite{McShea2010}.

\subsection{Outlook}
We have applied adaptive dynamics to the game of teams and accurately described the adaptive game both as a function-valued game and as a discrete, vector-valued game.  In both cases, we showed the existence of solutions and identified their essential characteristics. We also explored differences and similarities between the adaptive dynamics for the discrete game and the function-valued game.
Carefully chosen examples were used in order to highlight important aspects of the dynamics.  One major result of this work is that the stationary points of the dynamics, in both the function-valued and discrete vector-valued case, are precisely the equilibrium strategies of the associated team game.  Although these are stationary points, they are not stable, and branching may occur when one takes perturbations of the stationary solution.  Further studies would be needed to investigate how well these team games and the current adaptive dynamics agree with populations of organisms that are found in nature. 
Our results provide a rich basis of characteristics that can be tested in experiments.

\subsection{Python implementation}
We provide a Python module that can be used for solving the adaptive dynamics problem as described in Section~\ref{sec:computations}.
It is available here:
\begin{center}
 \url{https://github.com/carljoar/to-appear}
\end{center}



\end{document}